\theoremstyle{plain}
\newtheorem{thm}{Theorem}[section]
\newtheorem{pro}[thm]{Proposition}
\newtheorem{lem}[thm]{Lemma}
\newtheorem{cor}[thm]{Corollary}
\newtheorem{con}[thm]{Conjecture}
\theoremstyle{definition}
\newtheorem{dfn}[thm]{Definition}
\newtheorem{nt}[thm]{Notation}
\newtheorem{rem}[thm]{Remark}
\newtheorem{exa}[thm]{Example}
\theoremstyle{remark}
\newcommand{\Z}{\mathbb{Z}}
\newcommand{\N}{\mathbb{N}}
\newcommand{\C}{\mathbb{C}}
\newcommand{\Q}{\mathbb{Q}}
\newcommand{\PS}{\mathbb{P}}
\newcommand{\F}{\mathbb{F}}
\DeclareMathOperator{\Pic}{Pic} 
\DeclareMathOperator{\rk}{rank} \DeclareMathOperator{\W}{Weil}
\DeclareMathOperator{\Exc}{Exc} \DeclareMathOperator{\Spec}{Spec}
 \DeclareMathOperator{\Bl}{Bl}
\DeclareMathOperator{\Bs}{Bs}
\DeclareMathOperator{\Sing}{Sing}
\DeclareMathOperator{\Proj}{Proj}
\DeclareMathOperator{\NE}{NE}
\DeclareMathOperator{\NEb}{\overline{\mathrm{NE}}}
\DeclareMathOperator{\Cl}{Cl}
\begin{document}

\bibliographystyle{alpha}

\title{The defect of Fano $3$-folds}
\date{}
\author{Anne-Sophie Kaloghiros}
\address{Department of Pure Mathematics and Mathematical Statistics, 
Uni\-ver\-si\-ty of Cambridge, Wilberforce Road, Cambridge CB3 0WB, Uni\-ted
Kingdom}
\email{A.S.Kaloghiros@dpmms.cam.ac.uk}
\setcounter{tocdepth}{1}
\maketitle
\begin{abstract}
This paper studies the rank of the divisor class group of terminal Gorenstein Fano $3$-folds. If $Y$ is not $\Q$-factorial, there is a small modification of $Y$ with a second extremal ray; Cutkosky, following Mori, gave an explicit geometric description of the contractions of extremal rays on terminal Gorenstein $3$-folds.
I introduce the category of weak-star Fanos, which allows one to run the Minimal Model Program (MMP) in the category of Gorenstein weak Fano $3$-folds.
If $Y$ does not contain a plane, the rank of its divisor class group can be bounded by running a MMP on a weak-star Fano small modification of $Y$. 
These methods yield more precise bounds on the rank of $\Cl Y$ depending on the Weil divisors lying on $Y$.
I then study in detail quartic 3-folds that contain a plane and give a general bound on the rank of the divisor class group of quartic $3$-folds.
Finally, I indicate how to bound the rank of the divisor class group of higher genus terminal Gorenstein Fano 3-folds with Picard rank 1 that contain a plane.
\end{abstract}

\tableofcontents
\section{Introduction}
\label{sec:introduction}

Let $Y_4 \subset \PS^4$ be a quartic hypersurface with terminal singularities.
The Grothendieck--Lefschetz theorem states that Cartier divisors on $Y$
are restrictions of Cartier divisors on $\PS^4$, i.e.~ that $\Pic Y \simeq \Z[ \mathcal{O}_Y(1)]$. However, no such result holds for $\Cl Y$, the
group of Weil divisors of $Y$. If the quartic $Y$ is not
factorial, very little is understood about some aspects of its topology.
\par 
More generally, a Fano $3$-fold $Y$ with terminal Gorenstein
singularities is a $1$-parameter flat
deformation of a nonsingular Fano $3$-fold with the same Picard
rank \cite{Nam97a}: these are classified in 
\cite{Isk77,Isk78,MM86}. However, as in the case of terminal quartic $3$-folds,
the rank of $\Cl Y$ is not known in general.
\par 
A normal projective variety is \emph{$\Q$-factorial} if an integral multiple of
every Weil divisor is Cartier. A terminal Gorenstein Fano
$3$-fold $Y$ is $\Q$-factorial if and only if it is factorial \cite[Lemma 6.3]{Kaw88}, that
is, if and only if
\[
H^2(Y,\Z) \simeq H_4(Y,\Z).
\]    
Factoriality is a global topological property: it depends both on the
analytic type of singularities and on their relative position. 
If $Y_4 \subset \PS^4$ is a nodal quartic hypersurface with at most
$8$ singular points, then $Y$ is factorial \cite{Ch06}. 
Whereas the presence of a small number of ordinary double points does not
affect the factoriality of a hypersurface, this is
not true for even
slightly more complicated types of singularities. For instance, a
quartic $Y_4$ in 
the linear system $\Sigma$ spanned by
the monomials \[\{x_0^4, x_1^4, (x_4^2x_3+x_2^3)x_0,
x_3^3x_1, x_4^2x_1^2\}\] on $\PS^4$ is
not factorial because the plane $\Pi{=}\{x_0{=}x_1{=}0\}$ lies on $Y$. Yet, the general element $Y_4 \in \Sigma$ has a unique  c$A_1$ singular
point $P=(0{:}0{:}0{:}0{:}1)$ \cite{Me04}. 
\par 
In this paper, I study the rank of the divisor class group, or equivalently the \emph{defect},  of terminal Gorenstein Fano
$3$-folds with Picard rank $1$. The defect of a terminal Gorenstein Fano $3$-fold $Y$ is defined as the
difference:
\[
\sigma(Y)= \rk \Cl Y- \rk \Pic Y= h_4(Y)-h^2(Y).
\] 
The defect is a global topological invariant that measures how far $Y$
is from being factorial, or, in other words, to what extent Poincar\'e
duality fails on $Y$. 
\par  
The notion of defect was introduced
in \cite{C83} in an attempt, based on Deligne's Mixed Hodge Theory
\cite{D74}, to extend Griffiths' results \cite{G69} to the Hodge
theory of double covers of $\PS^3$ branched in a nodal hypersurface. 
Works on the defect of hypersurfaces or of Fano
$3$-folds have focussed on
studying their mixed Hodge structures \cite{C83,Di90,NS95, Cy}. 
These results rely on computations of the cohomology of specific
varieties: they are impractical to determine a global bound on the
defect of terminal Gorenstein Fano $3$-folds. In this paper, I use
birational geometry to bound the
defect of terminal Gorenstein Fano $3$-folds. These methods also yield more precise bounds on the defect in terms of the Weil divisors lying on the $3$-fold. 

\par 
A nodal quartic hypersurface in $\PS^4$ has at most $45$ ordinary
double points \cite{V83}; the Burkhardt quartic (Example~\ref{exa:1})
is the unique such quartic up to projective
equivalence\cite{dJSBVV90}. One sees that 
the defect of the Burkhardt quartic is at least $15$. I show that the
defect of any quartic $Y_4 \subset \PS^4$ with terminal singularities
is at most $15$
and that this bound can be refined in some cases. More precisely, I prove:  
\begin{thm}
 \label{thm:2}
Let $Y_4 \subset \PS^4$ be a quartic hypersurface with terminal
singularities. The rank of $\Cl Y$ satisfies:
\begin{itemize}
\item[(i)]  $\rk \Cl Y \leq 9$ when $Y$ contains neither a plane nor a quadric,
\item[(ii)] $\rk \Cl Y \leq 10$ when $Y$ does not contain a plane,
\item[(iii)] $\rk \Cl Y \leq 16$, with equality precisely when $Y$ is projectively equivalent to the Burkhardt quartic. 
\end{itemize}
\end{thm}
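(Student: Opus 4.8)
The plan is to reduce the statement to a bound on the Picard rank of a small modification and then to separate the case in which $Y$ contains a plane from the case in which it does not. Since $Y_4 \subset \PS^4$ is a hypersurface it is Gorenstein, and by assumption it is a terminal Gorenstein Fano $3$-fold with $-K_Y = \mathcal{O}_Y(1)$, so $(-K_Y)^3 = 4$, and by Grothendieck--Lefschetz $\rk \Pic Y = 1$, whence $\sigma(Y) = \rk \Cl Y - 1$. Choose a small $\Q$-factorialization $\pi \colon \tilde Y \to Y$. Being small, $\pi$ induces an isomorphism $\Cl \tilde Y \cong \Cl Y$, and being crepant it makes $-K_{\tilde Y} = \pi^*\mathcal{O}_Y(1)$ nef and big with $(-K_{\tilde Y})^3 = 4$; since $\tilde Y$ is $\Q$-factorial, $\rho(\tilde Y) = \rk \Cl \tilde Y = \rk \Cl Y$. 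Thus $\tilde Y$ is a $\Q$-factorial terminal Gorenstein weak Fano $3$-fold with $(-K_{\tilde Y})^3 = 4$, and it suffices to bound $\rho(\tilde Y)$.

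Suppose first that $Y$ contains no plane. Then $\tilde Y$ may be chosen to be a weak-star Fano, so one can run a $K$-MMP staying inside the category of Gorenstein weak Fano $3$-folds: this writes $\tilde Y$ as a chain of flops (which leave $\rho$ unchanged) and divisorial contractions (each dropping $\rho$ by one), ending in a Mori fibre space $g \colon \bar Y \to Z$ with $\bar Y$ a Gorenstein weak Fano; hence $\rho(\tilde Y) = \rho(\bar Y) + d$ with $d$ the number of divisorial steps. I would then use Cutkosky's geometric description of the divisorial contractions of terminal Gorenstein $3$-folds to control, at each step, the change of $(-K)^3$: it is bounded below by a positive constant except in the explicit cases where the contracted divisor is a plane or a (possibly degenerate) quadric surface whose image in $Y$ is a plane or a quadric surface. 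Since $Y$ contains no plane the plane case is excluded, and the quadric case occurs only if $Y$ contains a quadric surface. Together with the bound on $\rho(\bar Y)$ coming from the classification of Gorenstein weak Fano Mori fibre spaces with $(-K)^3$ bounded (the base $Z$ being a point, a smooth rational curve, or a smooth rational surface of bounded Picard rank), this yields $\rho(\tilde Y) \le 10$ in general, proving (ii), and $\rho(\tilde Y) \le 9$ when $Y$ also contains no quadric surface, proving (i).

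Now suppose $Y$ contains a plane $\Pi$. Then the small modification $\tilde Y$ fails to be weak-star Fano — the strict transform of $\Pi$ spans a $K$-negative extremal ray whose contraction leaves the Gorenstein category — and the MMP argument above does not apply. Instead I would study such quartics directly: writing $\Pi = \{x_0 = x_1 = 0\}$ and $Y \colon x_0 A_3 + x_1 B_3 = 0$, the pencil of hyperplanes through $\Pi$ cuts $Y$ in a family of residual cubic surfaces, and, after resolving the base locus $\Pi$, presents $Y$ birationally as a cubic surface fibration over $\PS^1$. From this model one reads off that $\sigma(Y)$ is a sum of explicit bounded local contributions — from the configuration of planes on $Y$, from the reducible and non-normal members of the pencil, and from the terminal singularities of $Y$ — each of which I would bound, giving $\rk\Cl Y = \sigma(Y)+1 \le 16$. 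For the equality statement: by (ii) a quartic with $\rk\Cl Y = 16$ must contain a plane, and then equality forces every contribution above to be maximal; in particular $Y$ has only ordinary double points and exactly $45$ of them, the maximum for a nodal quartic, so by \cite{dJSBVV90} it is projectively equivalent to the Burkhardt quartic, which indeed has $\sigma = 15$ (Example~\ref{exa:1}). This proves (iii).

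I expect the plane case (iii) to be the main obstacle: in contrast to the clean MMP bookkeeping available when $Y$ contains no plane, bounding the defect of a quartic through a plane — and then proving the rigidity statement that saturating all the bounds singles out the Burkhardt quartic — requires a careful analysis of the pencil of residual cubic surfaces together with the interaction between the planes on $Y$ and its singularities.
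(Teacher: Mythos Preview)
Your plan matches the paper's proof in its overall architecture: parts (i) and (ii) are obtained exactly as you describe, by running an MMP on a weak-star small factorialization and using Cutkosky's list to see that each divisorial step raises $(-K)^3$ by at least $4$ (resp.\ at least $2$ when a quadric is contracted), combined with the classification-constrained values of $(-K)^3$ and the small Picard rank of the end product; and part (iii) is handled by blowing up the plane to get a cubic surface fibration over $\PS^1$ and then, for the equality case, forcing $45$ nodes and invoking \cite{dJSBVV90}.

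One point where the paper is sharper than your sketch: for (iii) the bound $\rk\Cl Y\le 16$ is \emph{not} obtained by summing ``local contributions from non-normal members and singularities''. The paper first observes (via Corti's standard models) that $\rk\Cl X\le 8+2N+M$, where $N$ (resp.\ $M$) is the number of fibres with three (resp.\ two) components, so the whole problem reduces to bounding the number of \emph{reducible} fibres. The heart of the argument is then a short projective-combinatorial analysis on the fixed plane $\Pi$: each reducible fibre contributes a second plane $\Pi'_i$ meeting $\Pi$ in a line $L_i$; terminality of $Y$ forces the $L_i$ to be pairwise distinct with no three concurrent, and a direct computation with the defining equation shows that five such lines are impossible. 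Hence at most $4$ reducible fibres and $\rk\Cl Y\le 16$. Your ``bounded local contributions'' framing would work, but the paper's route is cleaner and is also what makes the rigidity step tractable: equality forces exactly $4$ fibres, each a union of three planes, and a count on each fibre and on the base locus gives the $45$ nodes.

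Two small inaccuracies to fix in your write-up: the MMP need not end in a Mori fibre space (it may terminate with a Picard-rank-$1$ Fano), and the reason the plane case is excluded from the weak-star framework is precisely the failure of condition (iii) in Definition~\ref{dfn:1} ($A_X^2\cdot S>1$), not that an extremal contraction of $\widetilde\Pi$ is $K$-negative.
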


\subsection*{Outline}
I now shetch the proof of Theorem~\ref{thm:2} and present the organisation of the paper.  

In Section~\ref{sec:categ-weak-fano}, I define the category of
\emph{weak-star Fano} $3$ folds.
A small factorialization of a
terminal Gorenstein Fano $3$-fold $Y$ is weak-star Fano 
unless $Y$ contains a plane embedded by $\vert A_Y\vert_{\vert \PS^2}=\mathcal{O}_{\PS^2}(1)$. 
Weak-star Fano $3$-folds are inductively Gorenstein: if $\varphi \colon X \to X'$ is an extremal birational contraction, then $X'$ has terminal Gorenstein singularities. This is a major simplification: one may then use the explicit geometric description of extremal contractions of Mori--Cutkosky \cite{Cut88}.
 
In Section~\ref{sec:minim-model-progr},  I show that more is true: the
MMP may be run in the category of weak-star Fano
$3$-folds.  Since weak-star Fano $3$-folds are small modifications of terminal Gorenstein Fano $3$-folds, some of their numerical invariants--such as their anticanonical degree--can be read off the classification of nonsingular Fano $3$-folds \cite{Isk77, Isk78,MM82}. Following the description of \cite{Cut88}, it is possible to associate to each extremal contraction some numerical constraints: this reduces many questions on the MMP of weak-star Fano $3$-folds to a careful analysis of the classification \cite{Isk77, Isk78,MM82}.

Let $Y$ be a terminal Gorenstein Fano $3$-fold and $X$ a small factorialization; note that $\rk \Cl Y= \rk \Pic X=\rho(X)$. The main case is when $X$ is a weak-star Fano $3$-fold of Fano index $1$. The rank of $\Cl Y$ is bounded by running the MMP on $X$ and by studying the numerical constraints associated to divisorial contractions, especially those with centre along a curve.  Roughly, the cases of high defect for $Y$ correspond to the MMP on $X$ consisting of the contraction of several low degree surfaces and ending with a high anticanonical degree Fano $3$-fold, such as $\PS^3$ (see Sections~\ref{sec:mmp} and \ref{sec:bound-defect-some} for precise statements).
The case of Fano index $\geq 2$ is treated briefly in Section~\ref{sec:higher-index-fanos}. Since ${-}K=2H$ for $H$ Cartier, a birational extremal contraction can only be a flop or the inverse of the blowup of a smooth point.  

Section~\ref{sec:fano-3-folds-4} is independent of the rest of the paper and treats the case of terminal Gorenstein Fano $3$-folds whose small factorialization is not weak-star.  
I study in detail quartic
$3$-folds that contain a plane and bound their defect by explicit calculation. I also indicate how to
bound the defect of terminal Gorenstein Fano
$3$-folds with Picard rank $1$ and genus $g \geq 3$.  

\subsection*{A ``geometric motivation'' of non-factoriality }The framework of weak-star Fano $3$-folds allows one to state a more precise description of the divisor class group of non-factorial Fano $3$-folds \cite{Kal09}.
If $Y$ is not factorial, there is a small modification $X$ of $Y$ with a second extremal ray $R$; in most cases, $R$ is of divisorial type. Now, $X$ can be deformed to a nonsingular small modification of a Fano $3$-fold $X_{\eta}$ with $\rho(X_{\eta})=2$, on which the extremal ray $R$ is still present.  It is then possible to run a two-ray game on $X_{\eta}$, and this two-ray game deforms back to a two-ray game on $X$. This shows that $Y$ contains a surface of one of a finite number of special types (see \cite{Kal09}). This analysis can be carried out at each divisorial step of the MMP on a small factorialization of $Y$, and can be used to exhibit generators for the Cox ring of $Y$.

\subsection*{Notations and conventions}
All varieties considered in this paper are normal, projective and
defined over
$\C$. Let $Y$ be a terminal Gorenstein Fano $3$-fold, $A_Y=-K_Y$ denotes the anticanonical divisor of $Y$.  The \emph{Fano index} of $Y$ is the maximal integer such that $A_Y=i(Y) H$ with
  $H$ Cartier. As I only consider Fano $3$-folds
with terminal Gorenstein singularities in this paper, the term index
always stands for Fano index.  
  The \emph{degree} of $Y$ is $H^3$ and the \emph{genus} of $Y$ is $g(Y)=h^0(X,A_Y)-2$. I denote by $Y_{2g-2}$ for $2 \leq g \leq 10$ or $g=12$ (resp.~
$V_d$ for $1\leq d \leq 5$) terminal
  Gorenstein Fano $3$-folds of Picard rank $1$, index $1$
  (resp.~ $2$) and genus
  $g$ (resp.~ degree $d$).
Finally, $\F_m= \PS(\mathcal{O}_{\PS^1}\oplus \mathcal{O}_{\PS^1}(-m))$ denotes the
$m$th Hirzebruch surface.

\subsection*{Acknowledgements}
This paper is based on my PhD  thesis \cite{Kal07}.
I would like to thank my PhD supervisor Alessio Corti, for his
encouragement and
support. I cannot express how grateful I am for the many
discussions we have had and for his guidance over these past few years. 
I would also like to thank Miles Reid, Nick Shepherd-Barron,
Burt Totaro, Ivan Smith, Vladimir Lazi\'c and Andreas H\"oring for many useful conversations
and comments.  Lastly, I thank the referee for several helpful suggestions.

\section{The category of weak-star Fano $3$-folds}
\label{sec:categ-weak-fano}
In this section, I recall some results on terminal Gorenstein Fano $3$-folds and on the MMP for $3$-folds. I also introduce the category of \emph{weak-star Fano} $3$-folds. Most terminal Gorenstein Fano $3$-folds have a weak-star Fano small factorialization. These are inductively Gorenstein and hence well behaved under the birational operations of the MMP.

\subsection{Definitions and first results}
\label{sec:defin-first-results}
\begin{dfn}\label{dfn:1}
\mbox{}\begin{enumerate}
\item[1.]
A $3$-fold $Y$ with terminal Gorenstein singularities is \emph{Fano} if its anticanonical divisor $A_Y$ is ample.
\item[2.]
A $3$-fold $X$ with terminal Gorenstein singularities is \emph{weak Fano} if $A_X$ is
  nef and big.  
  \item[3.]The morphism $h\colon X \to Y$ defined by $\vert nA_X \vert$ for $n>\!\!>0$ is the \emph{(pluri-)anticanonical map} of $X$, $R=R(X, A)$ is the \emph{anticanonical ring} of $X$ and $Y= \Proj R$ is the \emph{anticanonical model} of $X$.
\item[4.] A weak Fano $3$-fold $X$ is a \emph{weak-star Fano} if, in addition:
  \begin{enumerate}
  \item[(i)] $A_X$ is ample outside of a finite set of curves, i.e.~  $h\colon X \to Y$ is a small modification,
  \item[(ii)]  $X$ is factorial, and in particular $X$ is Gorenstein,
  \item[(iii)] $X$ is inductively Gorenstein, that is $(A_X)^2\cdot S >1$ for every irreducible divisor $S$ on $X$,
  \item[(iv)] $\vert A_X \vert$ is basepoint free, so that $\varphi_{\vert A \vert}$ is generically finite. 
 \end{enumerate}
\end{enumerate}
\end{dfn}
\begin{rem}
\label{rweak}
\mbox{}
\begin{enumerate}
\item[1.] Item (iii) in the definition of weak-star Fano $3$-folds guarantees that if $\varphi \colon X \to X'$ is a birational extremal contraction, then $X'$ is factorial and terminal.  
Indeed, by Lemma~\ref{lem:1} and by Mori-Cutkosky's classification (see Theorem~\ref{Cut}), $X'$ is factorial and terminal unless 
$\varphi$  contracts a plane $E \simeq \PS^2$ with normal bundle $\mathcal{O}_{\PS^2}(-2)$ to a point of Gorenstein index $2$. 
In this case, by adjunction, the restriction of $A_X$ to $E$ would be ${A_X}_{\vert E}= \mathcal{O}_{\PS^2}(1)$ and $(A_X)^2 \cdot E
\leq 1$: this is impossible when $X$ is weak-star.
\item[2.] Item (iv) of the definition is a convenience, that I have included to avoid digressing throughout the paper to exclude the special cases listed in Proposition~\ref{pro:bpf}. In addition, note that when $\vert A_X\vert$ is basepoint free, either $R(X,A_X)$ is generated in degree $1$ and $A_Y$ is very ample or $\varphi_{\vert A\vert}$ is $2$-to-$1$ to $\PS^3$ or to a quadric $Q \subset \PS^4$.
\end{enumerate}
\end{rem}
\begin{lem}\label{lem:4} Let $h\colon X\to Y$ be a small factorialization of a terminal Gorenstein Fano $3$-fold $Y$ such that $\vert A_Y\vert$ is basepoint free.
\begin{enumerate}\item[1.] 
If $A_Y$ is very ample, $X$ is not weak-star Fano if and only if $Y$ contains a plane $\PS^2$ with ${A_Y}_{\vert \PS^2}=\mathcal{O}_{\PS^2}(1)$.
\item[2.] If $A_Y$ is not very ample, $X$ is not weak-star Fano if and only if $Y{=}Y_6 \subset \PS(1^4,3)$ and $Y$ contains a plane $ \{y{=}l(x_0, \cdots, x_3){=}0\}$, where $x_i$ are the coordinates of weight $1$, $y$ is the coordinate of weight
$3$ on $\PS(1^4,3)$ and $l$ is a linear form. 
\end{enumerate}
\end{lem}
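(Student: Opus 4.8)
The plan is to analyze when condition (iii) of Definition~\ref{dfn:1} — the inductive Gorenstein condition $(A_X)^2 \cdot S > 1$ for every irreducible divisor $S$ on $X$ — can fail, since the other three conditions in the definition of weak-star are automatic for a small factorialization $h \colon X \to Y$ of a terminal Gorenstein Fano $3$-fold with $|A_Y|$ basepoint free: condition (i) holds because small factorializations are by construction small modifications; condition (ii) holds because $X$ is factorial (this is how small factorializations are set up) and factorial terminal is Gorenstein by \cite[Lemma 6.3]{Kaw88}; condition (iv) holds because $|A_X| = h^*|A_Y|$ and $|A_Y|$ is basepoint free by hypothesis. So the entire content of the lemma is to identify the surfaces $S \subset X$ with $(A_X)^2 \cdot S \le 1$, to show they push down to special surfaces on $Y$, and conversely.

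For the forward direction, suppose $S \subset X$ is an irreducible divisor with $d := (A_X)^2 \cdot S \le 1$. Let $T = h(S) \subset Y$; since $h$ is small, $h^* A_Y = A_X$ and $(A_Y)^2 \cdot T = d \le 1$, so $T$ is a surface with $(A_Y)^2 \cdot T \le 1$. First I would treat the very ample case (part 1): here $A_Y$ embeds $Y$ in projective space, so $(A_Y)^2 \cdot T = \deg T$ in that embedding; the only surfaces of degree $\le 1$ are planes $\PS^2$ linearly embedded, and a plane has $(A_Y)^2 \cdot \PS^2 = 1$, with ${A_Y}|_{\PS^2} = \mathcal{O}_{\PS^2}(1)$. (Degree $0$ is impossible.) So failure of (iii) forces a plane with the stated restriction; conversely if such a plane $\PS^2 \subset Y$ exists then since $h$ is small it is not contracted, its strict transform $S$ on $X$ has $(A_X)^2 \cdot S = (A_Y)^2 \cdot \PS^2 = 1$, so (iii) fails and $X$ is not weak-star. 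For part 2, the classification of terminal Gorenstein Fano $3$-folds with $|A_Y|$ basepoint free but $A_Y$ not very ample (cf.\ Remark~\ref{rweak}(2) and the standard classification) says $\varphi_{|A_Y|}$ is $2$-to-$1$ onto $\PS^3$ or onto a quadric $Q \subset \PS^4$; the $2$-to-$1$-onto-$\PS^3$ case with index $1$ and $\rho = 1$ is exactly the sextic double solid, which in weighted projective coordinates is $Y_6 \subset \PS(1^4,3)$ (the double cover of $\PS^3 = \PS(1^4)$ branched in a sextic). I would then compute $(A_Y)^2 \cdot S$ for a surface $S = \{y - l(x) = 0\}$ cut out by a weight-$3$ hypersurface with $l$ linear: this surface maps isomorphically (via $\varphi_{|A_Y|}$, since the double cover is trivialized over it) onto a hyperplane $\PS^2 \subset \PS^3$, and one checks $(A_Y)^2 \cdot S = 1$ by an intersection-theoretic computation on $\PS(1^4,3)$ or by noting ${A_Y}|_S = \mathcal{O}_{\PS^2}(1)$ via this identification. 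Conversely, if $(A_X)^2 \cdot S \le 1$ for some $S$, push down to $Y$ and argue that $T = h(S)$ must be of this form: since $\varphi_{|A_Y|}|_T \colon T \to \varphi(T) \subset \PS^3$ has $(A_Y)^2 \cdot T = \deg(\varphi(T)) \cdot \deg(\varphi_{|A_Y|}|_T) \le 1$, either $\varphi|_T$ is birational onto a plane (forcing $T$ to be a section, i.e.\ of the form $\{y = l(x)\}$ up to the natural ambiguity, using that $T$ meets a general fibre of the double cover in one point) or onto a curve — but the latter is incompatible with $T$ being a divisor whose image is a divisor; one rules out the degenerate possibilities using terminality and the structure of the branch locus.

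The main obstacle I expect is the converse in part 2 — showing that \emph{any} surface $S$ on $X$ (equivalently $T = h(S)$ on $Y$) with $(A_X)^2 \cdot S \le 1$ must be exactly the strict transform of a ``linear section'' $\{y - l(x) = 0\}$ of the sextic double solid, rather than merely \emph{some} surface of small degree. This requires ruling out, e.g., a surface that maps $2$-to-$1$ onto a plane (which would have $(A_Y)^2 \cdot T = 2 > 1$, so is actually fine — but one must make sure the bookkeeping of multiplicities is right) and, more delicately, surfaces lying over a plane in $\PS^3$ that are not of the displayed monomial form because the branch sextic is tangent to that plane along a curve; here one invokes that $Y_6$ has terminal singularities, which restricts how the branch divisor can degenerate, together with the fact that a weight-$3$ equation of the form $y - l(x)$ is the only way (up to scaling) to get a reduced irreducible divisor on $\PS(1^4,3)$ whose intersection with $A_Y^2$ equals $1$ and which dominates a plane. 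I would handle this by a case analysis on $\varphi_{|A_Y|}|_T$: either it is generically injective, forcing the section form; or it is generically $2:1$, forcing $(A_Y)^2 \cdot T \ge 2$; the remaining possibility that $T$ is contracted to lower dimension is excluded because $h$ is small and $A_Y$ is big. The very ample case (part 1) and the forward direction in both parts should be essentially immediate from the degree interpretation of $(A_Y)^2 \cdot T$ and adjunction, so the write-up should front-load those and concentrate the effort on the geometry of the sextic double solid.
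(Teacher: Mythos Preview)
Your strategy matches the paper's --- reduce to condition (iii), push $S$ down to $Y$, and read $(A_Y)^2 \cdot h(S)$ as a degree in the ambient (weighted) projective space --- and part 1 goes through exactly as you say.

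The gap is in part 2. You correctly note that there are two non-very-ample cases, the sextic double solid $Y_6 \subset \PS(1^4,3)$ and the double cover $Y(2,4)$ of a quadric $Q \subset \PS^4$, but you then analyze only the first; your converse argument is already written with target $\PS^3$. The lemma asserts that the double-quadric case is \emph{never} obstructed, and this requires an argument. The paper's is that the same degree bound forces any bad $T \subset Y(2,4)$ to be (after coordinates) a plane $\{y = x_0 = x_1 = 0\}$, and one then checks from the equations that a double quadric containing this plane is singular along a curve, contradicting terminality. Without this step the ``only if'' in part 2 is incomplete. A smaller point: in $\PS(1^4,3)$ the expression $y - l(x)$ with $l$ linear is not weighted-homogeneous, so ``$\{y - l(x) = 0\}$'' does not define anything. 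The notation $\{y = l = 0\}$ in the statement means the codimension-$2$ locus cut by the two equations $y = 0$ and $l(x) = 0$; your geometric reading (a section of the double cover over a hyperplane of $\PS^3$) is the correct picture, and any such section can be brought to that form by an automorphism $y \mapsto y + g_3(x)$, but the single-equation description should be replaced by the pair.
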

\begin{proof} Assume that $A_Y$ is very ample. If $Y$ contains a plane $\Pi$ of the given form, the proper transform $S=h_{\ast}^{-1}\Pi$ of $\Pi$ satisfies $A_X^2\cdot S=1$ and $X$ is not weak-star Fano.
Conversely, if $X$ contains an irreducible divisor $S$ with $(A_X)^2\cdot S=1$, the image of $S$ is a surface in $\PS^{g+1}$, and as $A_Y=\mathcal{O}_{Y}(1)$, and its degree in $\PS^{g+1}$ is $1$.
Similarly, if $A_Y$ is not very ample, $X$ fails to be weak-star Fano precisely when $Y$ is a sextic double solid and contains a plane of the stated form. When $Y=Y(2,4)\subset \PS(1^5,4)$ is a double cover of a quadric $Q \subset \PS^4$, $X$ is weak-star unless $Y$ contains a plane of the form $\Pi{=}\{y{=}x_0{=}x_1{=}0\}$, where $x_i$ denote the coordinates of weight $1$ and $y$ is the coordinate of weight
$4$. However,  in this case, $Y$ has nonisolated singularities. 
\end{proof}
\begin{pro}
\label{pro:bpf}
Let $X$ be a weak Fano $3$-fold.  The anticanonical linear system $\vert A_X \vert$ is basepoint free except in the following cases:
\begin{enumerate}
\item[1.] $\Bs \vert A_X \vert \simeq \PS^1$ is a curve lying on the nonsingular locus of $X$. The general member $S \in \vert A_X \vert$ is a nonsingular K$3$ surface and  $\Bs \vert A_X \vert$ is a ${-}2$-curve on $S$, $X$ is called \emph{monogonal} and is one of:
\begin{enumerate}  
\item[(i)] $X= \PS^1 \times S$, where $S$ is a weighted
hypersurface of degree $6$ in $\PS(1^2,2,3)$; $X$ has Picard rank $10$,
\item[(ii)]
$X=\Bl_{\Gamma} V$, the blowup of a weighted hypersurface $V$ of degree $6$ in $\PS(1^3,2,3)$ along the plane $\Pi{=}\{ x_0{=}x_1{=}0\}$; $X$ has
Picard rank $2$.
\end{enumerate}
\item[2.] $\Bs \vert A_X \vert =\{p\}$ is an ordinary double point and $Y$ is birational to a special complete intersection $X_{2,6} \subset \PS(1^4, 2, 3)$ with a node; $Y$ is the degeneration of a double sextic. 
\end{enumerate} 
\end{pro}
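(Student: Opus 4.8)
The plan is to reduce the statement to a question about complete linear systems on the general anticanonical surface $S\in\vert A_X\vert$, which is a K$3$ surface in all but one degenerate case, and then invoke the classical classification of such systems on K$3$ surfaces. First I would pass to the anticanonical model $h\colon X\to Y$ of Definition~\ref{dfn:1}. Since $h$ is crepant, $A_X=h^{*}A_Y$ and $H^0(X,A_X)\cong H^0(Y,A_Y)$, so $\Bs\vert A_X\vert=h^{-1}\bigl(\Bs\vert A_Y\vert\bigr)$; it therefore suffices to describe $\Bs\vert A_Y\vert$ and the fibres of $h$ above it. As $A_Y=-K_Y$ is nef and big and $Y$ has canonical, hence rational, singularities, $H^1(Y,\mathcal{O}_Y)=0$; by a standard argument $\vert A_Y\vert$ has no fixed component and $\dim\vert A_Y\vert\ge 1$, so the general $S\in\vert A_Y\vert$ is irreducible and reduced, contains $\Bs\vert A_Y\vert$, and satisfies $K_S=(K_Y+S)_{\vert S}=0$ by adjunction. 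Moreover $H^0(Y,A_Y)\to H^0(S,H_S)$ with $H_S={A_Y}_{\vert S}$ is surjective and $H_S$ is nef, big and Cartier, so $\Bs\vert A_Y\vert=\Bs\vert H_S\vert$ and the problem is now about $\vert H_S\vert$ on $S$.

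Suppose first that $S$ has at worst Du Val singularities, so $S$ is a K$3$ surface. I would pass to the minimal resolution $\pi\colon\widetilde{S}\to S$ and apply the classification of complete linear systems on K$3$ surfaces (Saint-Donat, Mayer) to the nef and big Cartier divisor $\widetilde{H}=\pi^{*}H_S$. Either $\vert\widetilde{H}\vert$ is basepoint free: then, since $\widetilde{H}\cdot E=0$ for every $\pi$-exceptional curve $E$, the general member of $\vert H_S\vert$ avoids $\Sing S$ and $\vert A_X\vert$ is basepoint free. Or $\vert\widetilde{H}\vert$ has a nonzero fixed part, which forces $\widetilde{H}=a\widetilde{E}+\Gamma$ with $\widetilde{E}^{2}=0$, $\Gamma^{2}=-2$, $\widetilde{E}\cdot\Gamma=1$, $a\ge 2$, the fixed part being the smooth rational curve $\Gamma$; descending to $X$, $\Bs\vert A_X\vert\simeq\PS^{1}$ sits in the smooth locus of $X$ (a Bertini argument along the base curve shows the general $S$ is a smooth K$3$ carrying $\Gamma$ as a $-2$-curve), and $\vert a\widetilde{E}\vert$ exhibits an elliptic pencil. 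This is the monogonal case. To pin down $X$ I would use $(A_X)^{3}=H_S^{2}=2a-2$, the induced structure of the anticanonical ring $R(X,A_X)$ — which must be generated in degrees $1,1,2,3$, reflecting the elliptic pencil and the degree-$1$ del Pezzo surface it produces — and the Picard number, to recover exactly the two models of item~(1): $X=\PS^{1}\times Z$ with $Z\subset\PS(1^{2},2,3)$ a del Pezzo surface of degree $1$, whence $\rho(X)=1+\rho(Z)=10$; or $X=\Bl_{\Pi}V$ with $V\subset\PS(1^{3},2,3)$ of degree $6$, whence $\rho(X)=\rho(V)+1=2$.

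Suppose instead that $S$ does not have Du Val singularities. Being Gorenstein with $K_S\sim 0$, it then carries a unique minimally elliptic — in fact simple elliptic — singularity, at a point $p$. Away from $\Bs\vert A_X\vert$ and $\Sing X$ a general $S$ is smooth (Bertini; and at a $cDV$ point of $X$ a general hyperplane section is already Du Val), so $p$ must be a genuine base point of $\vert A_X\vert$ at which \emph{every} anticanonical member is singular; a short local analysis then shows that $p$ is an ordinary double point of $X$, that $h$ is an isomorphism near $p$, and that $S$ is locally the affine cone over an elliptic quartic curve. Contracting the anticanonical ring and using Reid's description of anticanonical models, $Y$ is identified with a degeneration of a double sextic — the branch sextic in $\PS^{3}$ acquiring an ordinary quadruple point — equivalently $Y$ is birational to a special nodal $X_{2,6}\subset\PS(1^{4},2,3)$, and $\Bs\vert A_X\vert=\{p\}$; this is item~(2). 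The converse — that in the two monogonal models and in the $X_{2,6}$ case $\vert A_X\vert$ really fails to be basepoint free — is the easy direction, verified on the explicit equations.

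The step I expect to be the main obstacle is the non-Du-Val case: it is precisely there that the general elephant of $Y$ acquires a non-Du-Val (simple elliptic) singularity, so the K$3$ machinery does not apply and one must argue directly with the double-cover structure of $Y$ and of its crepant partial resolutions, ruling out any other configuration and showing that the base locus is a single ordinary double point. A second, more bookkeeping-type, difficulty is the precise identification of the two monogonal models: excluding other elliptic fibration structures and computing $\rho(X)$ by playing the anticanonical degree off against the generators of $R(X,A_X)$ and the list of possible general fibres.
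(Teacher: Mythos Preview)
Your dichotomy between the Du Val and the non--Du Val case for the general anticanonical surface $S$ is misplaced. The paper invokes Reid's general elephant theorem precisely to say that for a terminal Gorenstein weak Fano $3$-fold the general $S\in\vert A_X\vert$ \emph{always} has at worst rational double points; your non--Du Val branch is therefore empty. In particular, your route to case~(2) via a simple elliptic singularity on $S$ cannot succeed: if $p$ is an ordinary double point of $X$ (hence cDV), a general Cartier divisor through $p$ acquires an $A_1$, not a simple elliptic point, so the argument ``non--Du Val forces a node of $X$'' is backwards.

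Case~(2) actually arises inside your Du Val analysis, at a step you skip. After passing to the minimal resolution $\pi\colon\widetilde S\to S$ and applying Saint-Donat to write $\widetilde H=a\widetilde E+\Gamma$ with fixed $(-2)$-curve $\Gamma$, you must distinguish whether $\Gamma$ is $\pi$-exceptional. If it is not, $\Gamma$ descends to a smooth rational base curve on $S$ lying in the smooth locus of $X$, and you get the monogonal case~(1), as you wrote. But if $\Gamma$ \emph{is} $\pi$-exceptional, then $\widetilde H\cdot\Gamma=0$ forces $a=2$, hence $\widetilde H^{2}=2$, and $\pi(\Gamma)=\{p\}$ is a single Du Val point of $S$; thus $\Bs\vert A_X\vert=\{p\}\subset\Sing X$ with $(A_X)^3=2$, and one identifies $Y$ with the special nodal $X_{2,6}\subset\PS(1^4,2,3)$. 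This exceptional-versus-non-exceptional bookkeeping is exactly the ``easy extension of Saint-Donat'' that the paper alludes to, and it is what separates cases~(1) and~(2); it is not the Du Val/non--Du Val split.
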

\begin{proof}
This result is well known and classical; I just sketch its proof (see \cite{Kal07} for a complete proof).
Let $X$ be a weak Fano $3$-fold; Reid \cite{Re83} shows that the general section $S$ of $\vert A_X \vert $ is a K$3$ surface with no worse than rational double points. An easy extension of Saint-Donat's results \cite{SD74} on nef and big linear systems on nonsingular K$3$ surfaces applied to $\vert A_X \vert_{\vert S}$ finishes the proof. 
\end{proof}
\begin{lem}[Cone theorem for weak Fano $3$-folds]
  \label{lem:1}
Let $X$ be a weak Fano $3$-fold. The Kleiman--Mori cone $\NE(X)=
\NEb(X)$ of $X$ is a finite rational
polyhedral cone. If $R \subset \NE(X)$ is an extremal ray,
then either $A_X \cdot R >0$, or $A_X \cdot R =0$ and there is an
irreducible divisor $D$ such that $D \cdot R <0$. There is a contraction
morphism $\varphi_R$ associated to each extremal ray $R$. If $\varphi_R$ is
small, it is a flopping contraction.  
\end{lem}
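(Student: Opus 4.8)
The plan is to deduce the statement from the Cone, Contraction and Base-point-free Theorems for the klt pair $(X,0)$, exploiting the two defining features of the weak Fano hypothesis: $A_X=-K_X$ is nef, so $A_X\cdot R\ge 0$ for \emph{every} curve class $R$, and $A_X$ is big, so the anticanonical morphism $h\colon X\to Y$ of Definition~\ref{dfn:1} (which exists by the Base-point-free Theorem applied to $A_X$, since $A_X-K_X=2A_X$ is nef and big) is birational.

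\emph{Polyhedrality.} First I would record that $\NE(X)=\NEb(X)$ is a finite rational polyhedral cone; this is standard for weak Fano varieties and I would cite it rather than reprove it. The shape of the argument: by the Cone Theorem $\NEb(X)=\NEb(X)_{K_X\ge 0}+\sum_i R_i$, with the $R_i$ the (discretely many) $K_X$-negative extremal rays; since $A_X$ is nef, $\NEb(X)_{K_X\ge 0}=A_X^{\perp}\cap\NEb(X)$ is exactly the face $F=\NEb(X/Y)$ contracted by $h$; and as $A_X$ is big, $h$ is birational, so both $F$ and the collection $\{R_i\}$ are controlled by the finitely many irreducible components of $\Exc(h)$ --- either curves, when $h$ is small, or generalised del Pezzo surfaces (which have polyhedral cone of curves) contracted by $h$ to a point.

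\emph{The dichotomy and the contraction.} Let $R$ be an extremal ray; $A_X\cdot R\ge 0$ since $A_X$ is nef. If $A_X\cdot R>0$, then $R$ is $K_X$-negative and $\varphi_R$ exists by the Contraction Theorem. If $A_X\cdot R=0$, then $R\subseteq F=\NEb(X/Y)$; to construct $\varphi_R$, choose (using polyhedrality and rationality of the cone) a nef Cartier divisor $N_R$ with $N_R^{\perp}\cap\NEb(X)=R$, and apply the Base-point-free Theorem to $L=mN_R+A_X$ for $m\gg0$: $L$ is nef with $L^{\perp}\cap\NEb(X)=R$ (using $R\subseteq A_X^{\perp}$), while $L-K_X=mN_R+2A_X$ is nef and big, so $|\ell L|$ is base-point free for $\ell\gg0$ and defines the contraction $\varphi_R\colon X\to Z$ of $R$. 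Since $A_X$ is $\varphi_R$-trivial and big, $\varphi_R$ is birational (and $h$ factors through it). Now take $H_Z$ ample on $Z$ and $H_X$ ample on $X$: for $0<\epsilon\ll1$ the $\R$-divisor $D'=\varphi_R^{\ast}H_Z-\epsilon H_X$ is big --- bigness is open, and $\varphi_R^{\ast}H_Z$ is big, being the pullback of an ample divisor under a birational morphism --- while $D'\cdot R=-\epsilon(H_X\cdot R)<0$. Writing $D'\sim_{\R}A''+E''$ with $A''$ ample and $E''=\sum_j a_jE_j\ge0$ effective (Kodaira's lemma), we get $E''\cdot R<0$, hence $E_j\cdot R<0$ for some prime divisor $E_j$: this is the irreducible $D$ required. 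Note that none of this uses $\Q$-factoriality of $X$.

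\emph{Small contractions are flopping, and the main obstacle.} Finally, suppose $\varphi_R$ is small. Being birational and contracting a curve, it is of neither fibre nor divisorial type. If $A_X\cdot R>0$, it would be a $K_X$-negative small --- i.e.\ flipping --- extremal contraction of the terminal Gorenstein $3$-fold $X$. The crucial input is that no such contraction exists: by the Mori--Cutkosky classification of extremal contractions of Gorenstein terminal $3$-folds \cite{Cut88}, every birational $K_X$-negative extremal contraction of such an $X$ is divisorial (equivalently, a small $K$-negative contraction would have a contracted curve $C$ with $A_X\cdot C<1$ by Kawamata's length bound, impossible since $A_X$ is Cartier). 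Hence $A_X\cdot R=0$, so $\varphi_R$ is crepant; a small crepant birational contraction is by definition a flopping contraction. The only genuinely non-formal ingredients in the proof are this non-existence of Gorenstein flipping contractions and the polyhedrality of $\NEb(X)$ for weak Fanos; I expect the former to be the point requiring the most care.
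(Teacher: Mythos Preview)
Your argument is correct, but it is more elaborate than the paper's in two places, and it is worth seeing how the paper unifies them.

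The paper applies Kodaira's lemma \emph{once}, to $A_X$ itself: write $mA_X\sim A+D$ with $A$ ample and $D$ effective. This single decomposition does two jobs simultaneously. First, any class $z$ with $A_X\cdot z=0$ has $D\cdot z<0$ (since $A\cdot z>0$), so some irreducible component of $D$ is negative on $R$ --- this is the required divisor, obtained without constructing $\varphi_R$ first. Second, $\NEb(X)$ is covered by the two half-spaces $\{K_X<0\}$ and $\{(K_X+\epsilon D)<0\}$; since $(X,\epsilon D)$ is klt for $0<\epsilon\ll 1$, the cone theorem applies to each, and compactness of a cross-section gives finite rational polyhedrality. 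This also takes care of the contraction of $K$-trivial rays: they are $(K_X+\epsilon D)$-negative, so the Contraction Theorem for the pair $(X,\epsilon D)$ applies directly, without your separate Base-point-free construction via $mN_R+A_X$.

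Your route --- polyhedrality via the geometry of $\Exc(h)$, the divisor $D$ via Kodaira for $\varphi_R^\ast H_Z-\epsilon H_X$, and $\varphi_R$ via a tailored nef supporting divisor --- is valid and more modular, but it separates into three arguments what the paper does with one decomposition. Your sketch of polyhedrality via ``generalised del Pezzo surfaces contracted to a point'' is incomplete as stated (it omits exceptional surfaces contracted to curves), though you rightly flag this as something you would cite. On the final step (no flips via the length bound $A_X\cdot\gamma<1$ for a flipping curve, contradicting $A_X$ Cartier and nef), you and the paper agree; the paper cites Benveniste rather than Cutkosky or Kawamata.
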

  \begin{proof}
  The $3$-fold $X$ is terminal, hence by the standard cone theorem \cite[Theorem 4-2-1]{KMM87}:
\[
\NEb(X)= \NEb(X)_{A_X \leq 0}+ \sum {C_j}
\]
where the extremal rays $C_j$ are discrete in the half space $\{
A_X >0 \}$ and can be contracted. Since $A_X$ is nef, for any $z
\in N^1(X)$,  $A_X \cdot z \leq 0$ if and only if $A_X \cdot z
=0$. The anticanonical divisor $A_X$ is big: for some
integer $m>0$, $m(A_X) \sim A+D$, where $A$ is an ample divisor and
$D$ is effective. By the Nakai--Moishezon criterion for
ampleness, if $A_X \cdot z=0$ then $D \cdot z <0$. In particular, for
$0<\epsilon \ll 1 $,
\[
\NEb(X) \subset {{-}A_X}_{<0}   \cup  ({-}A_X + \epsilon D)_{<0}.
\]
By the usual compactness argument, NE(X) is a finitely generated rational
polyhedral cone. Extremal rays can be contracted by the contraction theorem
\cite[Theorem 3-2-1]{KMM87}.
Finally, if $\phi_R$ is small, $R$ flips or flops. A flipping curve $\gamma$
on a terminal $3$-fold $X$ satisfies $A_X \cdot \gamma <1$
\cite{Be85}. The anticanonical divisor is Cartier and nef: $\phi_R$ is a flopping contraction.
\end{proof}

For clarity of exposition, I recall the classification of extremal divisorial contractions established by Cutkosky following Mori.

\begin{thm}\cite[Theorems 4 and 5]{Cut88}
 \label{Cut} Let $X$ be a $3$-fold with terminal factorial singularities and $f \colon X \to X'$ the birational contraction of an extremal
ray. Assume that $f$ is not an isomorphism in codimension $1$. Then $X'$ is $\Q$-factorial and:
  \begin{enumerate}
  \item[E$1$] If $f \colon X \to X'$ contracts a surface $E$ to a
  curve $\Gamma$, $X'$ is factorial, non-singular near $\Gamma$ and $f$ is the blowup of $I_{\Gamma}$.
  The curve $\Gamma$ is l.c.i.~, has locally planar singularities and $X$ has only $cA_n$ singularities on $E$.
  \end{enumerate}
    If $f \colon X \to X'$ contracts a surface $E$ to a
  point $p$, $f$ is one of the following:
  \begin{enumerate}
\item[E$2$]  $(E, \mathcal{O}_E (E)) \simeq (\PS^2,\mathcal{O}_{\PS^2}(-1)) $, $f$ is the blowup at a non-singular point $p$.
 \item[E$3$] $(E,  \mathcal{O}_E(E)) \simeq ( \PS^1 \times \PS^1, \mathcal{O}(-1,-1))$, $f$ is the blowup at an ordinary double point $p$.
\item[E$4$]  $(E,  \mathcal{O}_E(E)) \simeq (Q,  \mathcal{O}_{Q}(-1))$, where 
  $n \geq 3$ and $Q$ is a quadric cone in $\PS^3$, $f$ is the blowup at a $cA_{n-1}$ singular point $p$.
\item[E$5$]  $(E, \mathcal{O}_E(E)) \simeq (\PS^2,\mathcal{O}_{\PS^2}(-2)) $$f$ is the blowup of a non Gorenstein point of index $2$.
\end{enumerate}
\end{thm}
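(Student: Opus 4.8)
The plan is to establish this classification — which is due to Cutkosky \cite{Cut88}, refining Mori's description of extremal rays on nonsingular projective $3$-folds — by reducing to Mori's theorem over the nonsingular locus of $X$ and then analysing directly the finitely many singular points of $X$, which are isolated compound Du Val (cDV) hypersurface points because $X$ is terminal Gorenstein. Write $E = \Exc(f)$ for the irreducible exceptional divisor and let $\ell$ generate the extremal ray $R$. First I would record the generalities: since $X$ is factorial, $E$ is Cartier; since $\rho(X/X') = 1$ and $f$ contracts a $K_X$-negative ray, $X'$ is $\Q$-factorial (take the strict transform of a Weil divisor on $X'$, subtract the $\Q$-multiple of $E$ that makes it $f$-trivial, and push down); and because $K_X$ is Cartier, the minimal anticanonical degree $\ell(R)$ of a curve in $R$ is a positive integer which a standard length estimate bounds by $\ell(R) \leq 2$, with $\ell(R) = 2$ only possible when $\dim f(E) = 0$ and $\ell(R) = 1$ forced when $\dim f(E) = 1$. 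The proof then splits according to $\dim f(E)$.

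For the case $\dim f(E) = 1$ (the statement E$1$): over the open locus where $X$ is nonsingular, Mori's theorem gives that the general fibre of $E \to \Gamma := f(E)$ is a line $\PS^1$ with $-K_X \cdot \PS^1 = 1$ and $\mathcal O_E(E)|_{\PS^1} \simeq \mathcal O(-1)$, that $X'$ is nonsingular at the general point of $\Gamma$, and that $f$ is locally there the blowup of a nonsingular curve in a nonsingular $3$-fold. It remains to treat the finitely many points $q \in E$ at which $X$ is singular; for each such $q$ I would cut by a general hyperplane section $S$ of $X$ through $q$, so that $S$ is a Du Val surface and $f$ restricts to the contraction of the curve $E \cap S$. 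Classical surface theory identifies $S \to f(S)$ as the blowdown of a chain of rational curves to a smooth point, which both forces $q$ to be a $cA_n$-point of $X$ and shows $f(S)$ is smooth at the image of $q$; letting $S$ vary in a pencil covering $\Gamma$ then yields that $X'$ is nonsingular along all of $\Gamma$, that $\Gamma$ is a local complete intersection with at worst planar singularities (read off from the chains appearing on the sections), and that $f$ is precisely the blowup of $\mathcal I_\Gamma$.

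For the case $\dim f(E) = 0$, say $f(E) = \{p\}$ (the statements E$2$--E$5$): every curve in $E$ is contracted, hence lies in $R$, so $f$-ampleness of $-K_X$ together with $E \cdot R < 0$ shows that $-K_X|_E$ and $-E|_E$ are both ample on $E$. As $E$ is Cartier it is Gorenstein, and adjunction gives $-K_E = (-K_X|_E) + (-E|_E)$ ample, so $E$ is a Gorenstein del Pezzo surface (one shows in addition that $E$ is normal, via Mori's argument or by noting that a general hyperplane section of $X$ through $p$ meets $E$ in an irreducible curve). Writing $K_X = f^{*}K_{X'} + aE$ with $a > 0$, and using that $(f^{*}K_{X'})|_E$ is numerically trivial, one gets $-K_X|_E \equiv -a\,E|_E$, hence $-K_E \equiv -(a+1)\,E|_E$ with $-E|_E$ an ample Cartier class: terminality of $X'$ at $p$ thus forces $(a+1)^{-1}(-K_E)$ to be an ample Cartier divisor for some $a > 0$, which, combined with Mori's analysis of the rational curves of anticanonical degree $\leq 2$ sweeping out $E$, leaves only the four pairs $(E, \mathcal O_E(E))$ equal to $(\PS^2, \mathcal O(-1))$, $(\PS^1 \times \PS^1, \mathcal O(-1,-1))$, $(\PS^2, \mathcal O(-2))$ and $(Q, \mathcal O_Q(-1))$ with $Q$ a quadric cone in $\PS^3$. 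In each case $a$ is then determined ($a = 2$, $1$, $\tfrac12$, $1$ respectively), so $p \in X'$ is a smooth point, an ordinary double point, a non-Gorenstein point of index $2$, or a $cA_{n-1}$-point with $n \geq 3$, and $f$ is the corresponding (weighted) blowup, as claimed.

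The main obstacle is the point-contraction case: the genuinely hard input is Mori's classification of the rational curves of low anticanonical degree covering $E$, which is what rules out non-normal or higher-degree exceptional surfaces and — crucially — pins down the normal bundle $\mathcal O_E(E)$ rather than merely the class $-K_E$. In the Gorenstein setting this is tightened by the extra rigidity that $E$ is Cartier (so $\mathcal O_E(E)$ is an integral class) and that terminality bounds the discrepancy $a$; the hyperplane-section technique from the E$1$ analysis supplies the additional leverage needed to confirm that no other normal bundles survive. By contrast, once Mori's nonsingular case is granted, the curve-contraction case E$1$ reduces to fairly routine bookkeeping with chains of rational curves on Du Val surface sections.
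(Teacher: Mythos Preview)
The paper does not give a proof of this statement: it is quoted verbatim as \cite[Theorems~4 and~5]{Cut88} and prefaced by ``For clarity of exposition, I recall the classification of extremal divisorial contractions established by Cutkosky following Mori.'' There is therefore nothing in the paper to compare your proposal against; the author simply imports Cutkosky's result as a black box for later use (notably in Remark~\ref{rweak}, Theorem~\ref{thm:5}, and the degree computations of Section~\ref{sec:bound-defect-some}).

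Your sketch is a reasonable outline of how Cutkosky's original argument goes --- reduction to Mori's nonsingular classification on the smooth locus, plus a local analysis at the isolated cDV points via general hyperplane sections --- and the broad strokes are correct. One point of caution in your point-contraction paragraph: you invoke ``terminality of $X'$ at $p$'' to constrain $a$, but terminality of $X'$ is not an input here, it is part of the \emph{conclusion}; the actual constraint on $(E,\mathcal{O}_E(E))$ comes from $E$ being a Gorenstein del Pezzo surface with $-E|_E$ ample Cartier and from the bend-and-break bound $\ell(R)\leq 2$, after which one reads off $a$ and hence the singularity type of $p\in X'$. This is a matter of exposition rather than a genuine gap, but worth tightening if you write this up.
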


\begin{thm}\cite{Nam97a}
  \label{thm:7}
Let $X$ be a small modification of a terminal Gorenstein Fano $3$-fold.  
There is a $1$-parameter flat deformation of $X$ 
\[
\xymatrix{
X\ar[r] \ar[d]& \mathcal{X} \ar[d]\\
\{0\} \ar[r]& \Delta
}
\]
such that the generic fibre $\mathcal{X}_{\eta}$ is a nonsingular small modification of a terminal Gorenstein Fano $3$-fold. The Picard ranks, the anticanonical
degrees and the indices of $X$ and $\mathcal{X}_{\eta}$ are equal.
\end{thm}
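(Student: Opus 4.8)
\medskip
\noindent The plan is to smooth $X$ directly and then read off the structure of a general deformation. By hypothesis $X$ carries a small birational morphism $h\colon X\to Y$ onto a terminal Gorenstein Fano $3$-fold; being an isomorphism in codimension one, $h$ is crepant, so $K_X=h^{\ast}K_Y$, and hence $X$ is terminal Gorenstein while $A_X=h^{\ast}A_Y$ is nef and big --- that is, $X$ is itself a terminal Gorenstein \emph{weak} Fano $3$-fold. Replacing $Y$ by $\Proj R(X,A_X)$ I may assume that $h$ is the anticanonical morphism of $X$, so the theorem reduces to smoothing $X$ and identifying $\mathcal{X}_{\eta}$.

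\medskip
\noindent\emph{Step 1 (smoothing, with invariants fixed).} I would produce a flat $1$-parameter deformation $p\colon\mathcal{X}\to\Delta$ with $\mathcal{X}_0=X$ and $\mathcal{X}_{\eta}$ nonsingular. Locally, the singularities of $X$ are isolated compound Du Val --- hence hypersurface --- $3$-fold points: they are smoothable, their local deformation functors are unobstructed, and the Milnor fibre of such a point has reduced cohomology only in real degree $3$, so a local smoothing leaves $H^2$ unchanged. Globally $H^i(X,\mathcal{O}_X)=0$ for $i>0$ by Kawamata--Viehweg vanishing, and, exactly as in Namikawa's treatment of the Fano case \cite{Nam97a}, $H^{\ge 1}(X,T^1_X)=0$ for the sheaf $T^1_X$ of first-order local deformations; the local-to-global spectral sequence for the cotangent complex then shows that $\Def(X)$ is unobstructed and that a simultaneous local smoothing at the singular points is induced by a global deformation. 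Shrinking $\Delta$, every fibre $\mathcal{X}_t$ has rational singularities with $H^1(\mathcal{O})=H^2(\mathcal{O})=0$ and constant $h^2$, so $\Pic(\mathcal{X}/\Delta)\to\Delta$ is a trivial local system; since $A_{\mathcal{X}/\Delta}$ is Cartier and restricts to $A_{\mathcal{X}_t}$, the Picard rank, the anticanonical degree $A_{\mathcal{X}_t}^3$ and the index are the same for all fibres, in particular for $X$ and $\mathcal{X}_{\eta}$. (If preferred, Step~1 may simply be quoted as the known smoothing theorem for terminal Gorenstein weak Fano $3$-folds.)

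\medskip
\noindent\emph{Step 2 (the generic fibre).} After a further shrinking $A_{\mathcal{X}_t}$ is nef and big for all $t$, so each $\mathcal{X}_t$ is a weak Fano and $H^{>0}(\mathcal{X}_t,nA_{\mathcal{X}_t})=0$ for $n\ge 1$; hence $p_{\ast}\mathcal{O}_{\mathcal{X}}(nA_{\mathcal{X}/\Delta})$ is locally free with formation commuting with base change, and $\mathcal{Y}:=\Proj_{\Delta}\bigoplus_{n\ge 0}p_{\ast}\mathcal{O}_{\mathcal{X}}(nA_{\mathcal{X}/\Delta})\to\Delta$ is flat with fibre $\mathcal{Y}_t$ the anticanonical model of $\mathcal{X}_t$, a terminal Gorenstein Fano $3$-fold. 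There is an induced $\Delta$-morphism $g\colon\mathcal{X}\to\mathcal{Y}$ with $g_0=h$, and it remains to see that $g_{\eta}$ is small: if $g_\eta$ contracted a divisor, the closure in $\mathcal{X}$ of that divisor would be a prime divisor $\mathcal{D}$ dominating $\Delta$ with $\dim g(\mathcal{D})\le 2$, so $g_0=h$ would contract the surface $\mathcal{D}\cap\mathcal{X}_0$ to a set of dimension $\le 1$, contradicting that $h$ is small. Hence $\mathcal{X}_{\eta}$ is a nonsingular small modification of the terminal Gorenstein Fano $3$-fold $\mathcal{Y}_{\eta}$, and by Step~1 its Picard rank, anticanonical degree and index agree with those of $X$.

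\medskip
\noindent\emph{Main obstacle.} The heart of the proof is Step~1 --- smoothing $X$ without changing $h^2$. If it is not quoted outright, the efficient route is to bootstrap from Namikawa's Fano case by showing that the small crepant morphism $h$ induces a \emph{smooth} morphism of deformation functors $\Def(X)\to\Def(Y)$, which rests on $h_{\ast}\mathcal{O}_X=\mathcal{O}_Y$ and $R^ih_{\ast}\mathcal{O}_X=0$ for $i>0$, i.e.\ on flopping contractions being invisible to the relevant obstruction theory; the invariance of $h^2$ is then the Clemens-type fact --- the very phenomenon underlying the notion of defect in this paper --- that an isolated cDV (hypersurface) $3$-fold singularity has no vanishing cohomology in degree $2$.
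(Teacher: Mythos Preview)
The paper does not prove this theorem at all: it is stated with the attribution \cite{Nam97a} and quoted as a known result, so there is no ``paper's own proof'' to compare your proposal against. Your outline is a reasonable reconstruction of how the cited result is obtained --- smoothing the weak Fano $X$ via the unobstructedness of $\Def(X)$ (using that the singularities are isolated cDV, hence hypersurface, and the relevant vanishing), invoking constancy of $h^2$ because the Milnor fibres contribute only in degree $3$, and then passing to the relative anticanonical model to see that the generic fibre is again a small modification of a Fano. This is in line with Namikawa's argument; in the context of this paper the theorem functions purely as input from \cite{Nam97a}, and all that is actually used downstream (e.g.\ in Lemma~\ref{lem:3} and Corollaries~\ref{cor:1}--\ref{cor:2}) is the consequence that the numerical invariants of $X$ can be read off the classification of nonsingular Fano $3$-folds.
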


\section{MMP for weak-star Fano $3$-folds}
\label{sec:minim-model-progr}

Let $Y$ be a terminal Gorenstein Fano $3$-fold, and let $h\colon X \to Y$ be a small factorialization. 
In Section~\ref{sec:categ-weak-fano}, I showed that unless $Y$ contains a plane, $X$ is weak-star.
In this section, I show that the category of weak-star Fano $3$-folds is preserved by the birational operations of the MMP, so that the explicit geometric description of extremal contractions of Mori--Cutkosky applies.
Each type of extremal contraction imposes numerical constraints on the intersection numbers of relevant cycles in cohomology. Given that numerical invariants of terminal Gorenstein Fano $3$-folds such as their degree or genus are the same as those of non-singular Fano $3$-folds, this observation will yield a bound on the rank of the divisor class group of terminal Gorenstein Fano $3$-folds that do not contain a plane. 
\subsection{Minimal  Model Program}
\label{sec:mmp}

\begin{lem} 
\label{lem:11}Let $X$ be a small modification of a terminal Gorenstein Fano $3$-fold $Y$. Assume that $\vert A_X \vert$ is basepoint free.
Denote by $\varphi \colon X \to X'$ an extremal
divisorial contraction with centre a curve $\Gamma$ and assume that $\Exc \varphi=E$ is a Cartier divisor. 

Then $\Gamma \subset X'$  is locally a complete intersection and has planar singularities. The contraction $\varphi$ is locally the blowup of the ideal sheaf $\mathcal{I}_{\Gamma}$. In addition, the following relations hold :
\begin{align} 
(A_X)^3&=(A_{X'})^3-2(A_{X'})\cdot \Gamma-2+2p_a (\Gamma) \\
(A_X)^2 \cdot E&=
A_{X'}\cdot \Gamma+2-2 p_a(\Gamma)\\
A_{X} \cdot E^{2}&=-2+2p_a(\Gamma) \\
E^{3}&=-(A_{X'})\cdot \Gamma +2 -2 p_a(\Gamma)
 \end{align}
\end{lem}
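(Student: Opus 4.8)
The plan is to identify $\varphi$ by means of the Mori--Cutkosky classification, and then to deduce the four relations by intersection theory on the blowup.

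Since $Y$ is terminal Gorenstein and $h\colon X\to Y$ is small, $X$ is again terminal Gorenstein ($K_X=h^{\ast}K_Y$ with $K_Y$ Cartier).  Now $\varphi$ is a $K_X$-negative extremal divisorial contraction whose exceptional divisor $E$ is Cartier and which contracts $E$ onto a curve; of the types E$1$--E$5$ in Theorem~\ref{Cut}, only E$1$ has one-dimensional centre, and the hypothesis that $E$ is Cartier (together with $X$ Gorenstein) is what lets the classification be applied in our slightly weaker setting.  Hence $\varphi$ is of type E$1$, which immediately yields the structural part of the lemma: $X'$ is non-singular along $\Gamma$, the curve $\Gamma\subset X'$ is locally a complete intersection with planar singularities, $X$ has only $cA_n$ singularities along $E$, and $\varphi=\Bl_{\mathcal{I}_{\Gamma}}X'$.

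For the four identities I would invoke the standard description of the blowup $\varphi\colon X=\Bl_{\mathcal{I}_{\Gamma}}X'\to X'$ of an l.c.i.\ curve in a $3$-fold smooth along it.  Since $\Gamma$ is l.c.i., the conormal sheaf $\mathcal{I}_{\Gamma}/\mathcal{I}_{\Gamma}^{2}$ is locally free of rank $2$, $\varphi|_E\colon E\to\Gamma$ is the $\PS^1$-bundle $\PS(\mathcal{I}_{\Gamma}/\mathcal{I}_{\Gamma}^{2})$, and $\mathcal{O}_X(E)|_E$ restricts to $\mathcal{O}_{\PS^1}(-1)$ on a fibre $\ell$; moreover $K_X=\varphi^{\ast}K_{X'}+E$, i.e.\ $A_X=\varphi^{\ast}A_{X'}-E$.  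From this one gets: $\varphi_{\ast}[E]=0$ as a $2$-cycle and $\varphi_{\ast}(E^{2})=(E\cdot\ell)\,[\Gamma]=-[\Gamma]$ as a $1$-cycle; $E^{3}=(\mathcal{O}_X(E)|_E)^{2}=-\deg N_{\Gamma/X'}$ by the Grothendieck relation on this ruled surface (the quadratic correction term vanishes over a curve); and, by adjunction for the Gorenstein curve $\Gamma$, $\deg N_{\Gamma/X'}=\deg\omega_{\Gamma}-\deg(\omega_{X'}|_{\Gamma})=(2p_a(\Gamma)-2)+A_{X'}\cdot\Gamma$.  The last two combine to give relation $(4)$.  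Relations $(1)$, $(2)$, $(3)$ then follow by expanding $(\varphi^{\ast}A_{X'}-E)^{3}$, $(\varphi^{\ast}A_{X'}-E)^{2}\cdot E$ and $(\varphi^{\ast}A_{X'}-E)\cdot E^{2}$ and applying the projection formula, which gives $(\varphi^{\ast}A_{X'})^{3}=(A_{X'})^{3}$, $(\varphi^{\ast}A_{X'})^{2}\cdot E=(A_{X'})^{2}\cdot\varphi_{\ast}[E]=0$ and $(\varphi^{\ast}A_{X'})\cdot E^{2}=A_{X'}\cdot\varphi_{\ast}(E^{2})=-A_{X'}\cdot\Gamma$; substituting these together with $(4)$ and collecting terms produces the three formulas.

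The point that needs genuine care is the passage from the smooth-curve case, where all of the above is classical, to the case at hand, where $\Gamma$ merely has planar singularities: one must know that the discrepancy of $E$ is still exactly $1$ and that adjunction holds in the form $\omega_{\Gamma}\cong(\omega_{X'}\otimes\det N_{\Gamma/X'})|_{\Gamma}$.  Both are standard once one knows that $\Gamma$ is l.c.i.\ (hence Gorenstein) in a threefold smooth in a neighbourhood of $\Gamma$ --- which is precisely the content of the E$1$ case of Theorem~\ref{Cut} --- and with these facts in hand the remaining computations are purely formal.
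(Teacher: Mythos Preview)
Your derivation of the four intersection-theoretic relations from $A_X=\varphi^{\ast}A_{X'}-E$, the projection formula, and adjunction for the l.c.i.\ curve $\Gamma$ is correct and is exactly what the paper does (``the relations then follow from adjunction on $X'$ and from standard manipulation'').

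The structural part, however, is handled differently. You invoke Theorem~\ref{Cut} to conclude that $\varphi$ is of type E$1$, hence that $X'$ is smooth along $\Gamma$, that $\Gamma$ is l.c.i.\ with planar singularities, and that $\varphi$ is the blowup of $\mathcal{I}_{\Gamma}$. But Theorem~\ref{Cut} is stated for terminal \emph{factorial} $X$, whereas the lemma only assumes that $X$ is a small modification of a terminal Gorenstein Fano $3$-fold; this makes $X$ terminal Gorenstein but not a priori factorial. Your remark that ``$E$ Cartier together with $X$ Gorenstein lets the classification be applied in our slightly weaker setting'' is plausible but is an assertion, not an argument. A symptom of the mismatch is that you never use the hypothesis that $|A_X|$ is basepoint free.

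The paper takes a different route precisely here: rather than citing Cutkosky, it reproves the E$1$ structure directly using that hypothesis. A general $S\in|A_X|$ is a nonsingular K$3$ surface (Bertini, via basepoint-freeness) meeting each fibre $\varphi^{-1}(P)$ in a finite set; since $|A_X|=|\varphi^{\ast}A_{X'}-E|$, its image $S'=\varphi(S)$ is an anticanonical section of $X'$ containing $\Gamma$. As $S'$ is Cartier on the Cohen--Macaulay scheme $X'$ with isolated singularities, $S'$ is normal, and $\varphi|_S\colon S\to S'$ is finite birational, hence an isomorphism by Zariski's main theorem. Thus $\Gamma$ sits inside a smooth surface and has planar singularities, and the blowup description follows. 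This is what the basepoint-freeness assumption is for, and it sidesteps the factoriality issue entirely. If you want to keep your approach, you must either add factoriality to the hypotheses (harmless for all the applications in the paper, where $X$ is weak-star) or actually check that Cutkosky's E$1$ argument goes through with only $E$ Cartier and $X$ Gorenstein.
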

\begin{proof} 
I only sketch the proof of this Lemma because it is similar to \cite{Cut88}. 
As the singularities of $X$ are isolated, $\varphi$ is the blow up of a nonsingular curve $\Gamma$ away from finitely points of $X'$, whose fibers contain $\Sing X$.
Consider a point $P \in \Gamma$. Let $S\in \vert A_X\vert$ be a general section. By Bertini, $S$ is nonsingular and intersects the fiber $\varphi^{-1}(P)$ in a finite set. Since $\vert A_X\vert =\vert \varphi^{\ast} A_{X'}-E \vert$,  $S'= \varphi(S)$ is an anticanonical section of $X'$ that contains $\Gamma$. The restriction $\varphi_{\vert S}\colon S \to S'$ is a finite birational map that is an isomorphism outside of $\varphi^{-1}(P)$ and in particular, $S'$ has isolated singularities. Note that $S'$ is Cartier, and since $X'$ is Cohen Macaulay, $S'$ is normal.  By Zariski's main theorem, $\varphi_{\vert S}$ is an isomorphism. This shows that $\Gamma$ has planar singularities and that $X$ is the blow up of $\mathcal{I}_{\Gamma}$.
The relations then follow from adjunction on $X'$ and from standard manipulation of the equality:
\[
A_X=\varphi^{\ast}A_{X'}-E.
\] 
 \end{proof}
\begin{thm}
\label{thm:5}
The category of weak-star Fano $3$-folds is preserved by the
birational operations of the MMP.  
\end{thm}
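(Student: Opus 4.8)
The plan is to show that if $X$ is weak-star Fano and $\varphi\colon X\to X'$ is a birational extremal contraction, then $X'$ is again weak-star Fano, after first dealing separately with the Mori fibre space case (where there is nothing to preserve) and with flopping contractions. The divisorial case is the substantive one, and here I would split according to the five types E$1$--E$5$ of Theorem~\ref{Cut}. By Remark~\ref{rweak}(1), item (iii) of Definition~\ref{dfn:1} already rules out type E$5$ and guarantees that in all remaining cases $X'$ is factorial, terminal and Gorenstein; so one immediately gets item (ii) for $X'$, and Theorem~\ref{thm:7} lets one read off the anticanonical degree, index and Picard rank of $X'$ from the classification of nonsingular Fano $3$-folds. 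It remains to check, for $X'$, that $A_{X'}$ is nef and big (so $X'$ is weak Fano), that its anticanonical model is reached by a small modification (item (i)), that it is inductively Gorenstein (item (iii)), and that $|A_{X'}|$ is basepoint free (item (iv)).

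For bigness and the small-modification property: since $\varphi$ is $K_X$-negative, $A_{X'}=\varphi_*A_X$ and one has $A_X=\varphi^*A_{X'}-aE$ with $a\ge 1$ (in the curve case $a=1$ by Lemma~\ref{lem:11}, in the point cases $a$ is read off the blowup formulas of Theorem~\ref{Cut}); pushing forward, $(A_{X'})^3\ge (A_X)^3>0$, so $A_{X'}$ is big. For nefness I would argue that any curve $C\subset X'$ either is the image of a curve in $X$ not contracted by $\varphi$, on which $A_{X'}\cdot C = A_X\cdot \tilde C + aE\cdot\tilde C\ge 0$ once one checks $E\cdot\tilde C\ge 0$ for the strict transform (true because $\tilde C\not\subset E$), or meets the blown-up centre, where one uses that $A_{X'}$ is ample in a neighbourhood of that centre — indeed $X'$ is smooth near the centre by Theorem~\ref{Cut}, and the centre is a point or an l.c.i.\ curve; the only curves through the centre on which positivity of $A_{X'}$ is not automatic get controlled by the relations in Lemma~\ref{lem:11}, e.g.\ $(A_X)^2\cdot E = A_{X'}\cdot\Gamma + 2 - 2p_a(\Gamma)>0$ forces $A_{X'}\cdot\Gamma$ to be suitably large. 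For item (i), the anticanonical map of $X'$ factors the anticanonical map of $X$, which contracts only finitely many curves, so $X'\to Y'=\Proj R(X',A_{X'})$ contracts only finitely many curves as well. Item (iv): I would invoke Proposition~\ref{pro:bpf}, showing that $X'$ cannot fall into its exceptional list — the monogonal cases and the nodal $X_{2,6}$ case each have a distinctive Picard rank or birational type, which is incompatible with $X'$ being an extremal contraction of a weak-star $X$ whose own anticanonical system is basepoint free.

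The main obstacle is item (iii), inductive Gorensteinness: one must check $(A_{X'})^2\cdot S'>1$ for every irreducible divisor $S'\subset X'$. For $S'$ whose strict transform $S\subset X$ satisfies $S=\varphi^*S'-bE$, one computes $(A_{X'})^2\cdot S' = (A_X+aE)^2\cdot(S+bE)$ and expands using the intersection numbers from Lemma~\ref{lem:11} (and the analogous formulas for E$2$--E$4$); the inequality $(A_X)^2\cdot S>1$ gives a head start, but the cross terms involving $E$ can have either sign, so this is where a genuine case analysis — type of contraction, whether $S'$ contains the centre, value of $b$ — is unavoidable. I expect that in the curve case one must rule out the possibility that a divisor of anticanonical degree exactly $1$ on $X'$ pulls back to something of degree $>1$ on $X$ while $X$ nevertheless stays weak-star; the resolution should be that such an $S'$ would be a plane with $A_{X'}|_{S'}=\mathcal O_{\PS^2}(1)$, and then the geometry of $E$ meeting $S'$ (again via the Cutkosky normal-bundle data) contradicts $(A_X)^2\cdot S>1$. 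Assembling these sign estimates for each of E$1$--E$4$, together with the easy verifications of (i), (ii), (iv), completes the proof.
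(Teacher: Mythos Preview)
Your outline for items (i), (ii), (iv) and for nefness/bigness is essentially the paper's argument. One small slip: in the nefness check for the centre curve $\Gamma$, you invoke $(A_X)^2\cdot E>0$, but that only gives $A_{X'}\cdot\Gamma>2p_a(\Gamma)-2$, which for $p_a(\Gamma)=0$ does not exclude $A_{X'}\cdot\Gamma=-1$. The paper uses the weak-star bound $(A_X)^2\cdot E\ge 2$ together with relation~(2), i.e.\ $(A_X)^2\cdot E=A_{X'}\cdot\Gamma+2-2p_a(\Gamma)\le A_{X'}\cdot\Gamma+2$, to conclude $A_{X'}\cdot\Gamma\ge 0$.

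The real divergence is item (iii). You call the case analysis on E$1$--E$4$, on whether $S'$ contains the centre, and on the multiplicity $b$ ``unavoidable'', and then do not actually carry it out; your sketch trails off into ``I expect that\dots the resolution should be that\dots''. The paper avoids all of this with a one-line argument: for any irreducible divisor $S'\subset X'$ with proper transform $S\subset X$, the restriction $|A_X|_{|S}=|\varphi^*A_{X'}-aE|_{|S}$ is naturally a subsystem of $|A_{X'}|_{|S'}$, whence $(A_X)^2\cdot S\le (A_{X'})^2\cdot S'$. Since $X$ is weak-star, the left side is at least $2$, and (iii) for $X'$ follows immediately, with no splitting into contraction types and no tracking of cross terms. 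So the case analysis is not unavoidable---it is the step where you are missing the key idea.
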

\begin{proof}  The category of weak-star Fano $3$-folds is clearly stable under flops. Let $X$ be a weak-star Fano $3$-fold and $\varphi
  \colon X \rightarrow X'$ a divisorial contraction. By
  Theorem~\ref{Cut}, $X'$ is $\Q$-factorial and terminal, and as is explained in Remark~\ref{rweak}, $X'$ is also Gorenstein.

\emph{Step 1.} The anticanonical divisor $A_{X'}$ is big because $\vert A_X\vert \subset \vert \varphi^{\ast}(A_{X'})\vert$.  I prove that it is also nef. The anticanonical divisors of $X$ and $X'$ satisfy:
\begin{equation}
\label{eq:1}
A_X = \varphi^{\ast}(A_{X'})-aE,
\end{equation}
where $a=2$ if $\varphi$ contracts a plane to a nonsingular point
and $a=1$ otherwise. If $\varphi$ contracts a divisor to a
point, $A_{X'}$ is nef. Assume that $\varphi$ contracts a divisor $E$ to
a curve $\Gamma$. Let $Z'$ be an irreducible
reduced curve lying on $X'$. If $Z'$ and $\Gamma$ intersect in a
$0$-dimensional set, $A_{X'} \cdot Z'\geq 0$. Thus, $A_{X'}$
can fail to be nef only if $Z'=\Gamma$ and $A_{X'} \cdot \Gamma$ is negative.  By relation $(2)$ in Lemma~\ref{lem:11}, this is impossible when $X$ is weak-star Fano because $A_{X'}\cdot \Gamma+2 \geq A_X^2\cdot E \geq 2$.
\par

\emph{Step 2.}
Note that $A_X'$ is ample outside of a finite set of curves. Indeed, by \eqref{eq:1}, if $A_{X'}\cdot C=0$ for an effective curve $C'$, then either the proper transform $C$ of $C'$ does not meet the exceptional locus and is such that $A_X\cdot C=0$ or $C'= \Gamma$ is the centre of $\varphi$. As there are finitely many such curves, $X'$ is a small modification of a terminal Gorenstein Fano $3$-fold $Y'$.  By Theorem~\ref{thm:7}, the anticanonical degree and Picard rank of $Y'$ are the same as those that appear in the classifications of \cite{Isk77, Isk78, MM82}.

As $A_X$ is basepoint free, \eqref{eq:1} shows in addition that $\Bs \vert A_{X'} \vert$ is contained in the centre of the contraction $\varphi$. 
If $\vert A_{X'}\vert$ is not basepoint free, $X'$ is one of the $3$-folds listed in Proposition~\ref{pro:bpf}. I use the following observations: $A_{X'}^3>A_X^3$ (see the proof of Lemma ~\ref{lem:4}) and if $\rho(Y')>1$, then $\rho(Y)>1$ (see the proof of Lemma~\ref{lem:3}). 
If $\Bs\vert A_{X'}\vert= \{p\}$, then $A_{X'}^3=2$, \cite{Isk77,Isk78,MM82} show that there is no $X$ with $A_X^3<2$. Similarly, it is impossible to find a weak-star $X$ with $\rho(Y)>1$ and $A_X^3$ sufficiently small for $X'$ to have $\Bs\vert A_X'\vert \simeq \PS^1$.
The linear system $\vert A_{X'}\vert$ is basepoint free.

Note in addition, that if $A_Y$ is very ample, so is $A_{Y'}$. Indeed, $R(X',A_{X'})$ is generated in degree $1$ if and only if $\Phi_{\vert A_{X'}\vert}$ is birational onto its image. Since $\Phi_{\vert A_{X}\vert}= \nu \circ \Phi_{\vert A_{X'}\vert}$ where $\nu$ is the projection from a possibly empty linear subspace, $A_{Y'}$ is very ample.

\emph{Step 3.} Let $S'$ be an irreducible divisor on $X'$ and denote by
$S$ its proper transform on $X$. By \eqref{eq:1}, 
\[
\vert A_X \vert_{\vert S}= \vert\varphi^{\ast}(A_{X'})-aE\vert_{\vert S}
\]
for some $a \in \N$. It
is naturally a subsystem of $\vert A_{X'} \vert_{S'}$, and
the inclusion is strict if $S'$ meets the centre of $\varphi$. In
particular,
\begin{equation}
(A_X)^2 \cdot S \leq (A_{X'})^2 \cdot S' 
\end{equation}
and $X'$ does not contain an irreducible divisor $S'$ with $(A_{X'})^2
\cdot S'  \leq 1$ because $X$ is a weak-star Fano $3$-fold. 
\end{proof}
The MMP can be run in the category of weak-star
Fano $3$-folds. Moreover, there are strong restrictions on the
anticanonical models of the intermediate steps.
\begin{lem}
\label{lem:3}
Let $X := X_0$ be a weak-star Fano $3$-fold whose anticanonical
model $Y_0$ has Picard rank $1$. There is a sequence of extremal contractions:
\[
\xymatrix{
X_0 \ar[r]^-{\varphi_0} \ar[d]& X_1 \ar[r]^-{\varphi_1}\ar[d] & \cdots
& X_{n-1}\ar[r]^-{\varphi_{n-1}} \ar[d]& X_n \ar[d] \\
Y_0 & Y_1 & \cdots & Y_{n-1} & Y_n
}
\]
where for each $i$, $X_i$ is a weak-star Fano $3$-fold, $Y_i$ is
its anticanonical model, and
$\varphi_i$ is a birational extremal contraction. The Picard rank of $Y_i$, $\rho(Y_i)$, is equal to $1$ for all $i$. The
final $3$-fold $X_n$ is either a Fano $3$-fold with $\rho(X_n)=1$ or an extremal Mori fibre space. 
\end{lem}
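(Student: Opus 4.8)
The plan is to run the $K$-negative MMP on $X_0$ --- at each stage contract an extremal ray $R$ with $A_X\cdot R>0$ --- and to propagate along the way both the weak-star Fano property and the equality $\rho(Y_i)=1$. First I note that, by Lemma~\ref{lem:1}, every small extremal contraction of a weak Fano $3$-fold is a flop, hence $A$-trivial; so an extremal ray with $A_X\cdot R>0$ is either divisorial or of fibre type. By Theorem~\ref{thm:5}, contracting an $A$-positive divisorial ray of a weak-star Fano $3$-fold produces again a weak-star Fano $3$-fold, so the process stays in the category; contracting an $A$-positive ray of fibre type produces an extremal Mori fibre space and we stop. If at some stage $A_{X_i}$ is already ample, then $X_i=Y_i$ is a terminal Gorenstein Fano $3$-fold and $\rho(X_i)=\rho(Y_i)$, which (granting the Picard-rank statement below) equals $1$, so we stop there as well. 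Termination is clear: each divisorial step drops the Picard rank by one and no flops occur in the chain, so the process halts after at most $\rho(X_0)-1$ steps, with $X_n$ either a $\rho=1$ Fano $3$-fold or an extremal Mori fibre space.

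The heart of the matter is the claim $\rho(Y_i)=1$, which I would establish by induction on $i$, the case $i=0$ being the hypothesis. The key translation is: for a small birational morphism $h\colon X\to Y$, a class in $N^1(X)_{\Q}$ descends to a $\Q$-Cartier class on $Y$ precisely when it is numerically trivial on every curve contracted by $h$; since for a weak Fano $X$ the morphism $h\colon X\to Y$ is the contraction of the extremal face $A_X^{\perp}\cap\NEb(X)$, this identifies $\Pic(Y)_{\Q}$ with the annihilator in $N^1(X)_{\Q}$ of the $A_X$-trivial curves of $X$. Hence $\rho(Y_i)=1$ says exactly that the $A_{X_i}$-trivial curves of $X_i$ span a hyperplane $V_i\subset N_1(X_i)$ whose annihilator is $\R A_{X_i}$, and the inductive step amounts to showing that, for an $A$-positive divisorial contraction $\varphi_i\colon X_i\to X_{i+1}$, the $A_{X_{i+1}}$-trivial curves of $X_{i+1}$ again span a hyperplane of $N_1(X_{i+1})$ with annihilator $\R A_{X_{i+1}}$; equivalently, the defect $\sigma(Y_i)=\rho(X_i)-\rho(Y_i)$ drops by exactly one.

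To run the inductive step I would use $A_{X_i}=\varphi_i^{\ast}A_{X_{i+1}}-aE_i$ (cf.~\eqref{eq:1}) together with the intersection formulae of Lemma~\ref{lem:11} and the Mori--Cutkosky list of Theorem~\ref{Cut} to compare flopping loci. Because $A_{X_i}$ is nef one checks: an $A_{X_i}$-trivial curve disjoint from $E_i$ maps to an $A_{X_{i+1}}$-trivial curve, and the proper transform of an $A_{X_{i+1}}$-trivial curve is $A_{X_i}$-trivial and disjoint from $E_i$; an $A_{X_i}$-trivial curve meeting $E_i$ has image of positive $A_{X_{i+1}}$-degree, so it leaves the flopping locus; and in the single borderline case of a type E$1$ contraction with $(A_{X_i})^2\cdot E_i=2$ and $p_a(\Gamma)=0$ the centre $\Gamma$ itself may become $A_{X_{i+1}}$-trivial. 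Moreover $\R R_i\not\subset V_i$, since the generator of $R_i$ has positive intersection with $A_{X_i}$ while $A_{X_i}$ annihilates $V_i$. Feeding these facts into the quotient map $\varphi_{i\ast}\colon N_1(X_i)\to N_1(X_{i+1})=N_1(X_i)/\R R_i$ one extracts the desired hyperplane statement; once it is known, the last step of the chain behaves as in the first paragraph: either $\varphi_n$ is of fibre type ($X_n$ an extremal Mori fibre space) or $A_{X_n}$ is ample and then $X_n=Y_n$ with $\rho(X_n)=\rho(Y_n)=1$.

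I expect the main obstacle to be this comparison of flopping loci. The point is that $\R R_i\not\subset V_i$ forces the image $\varphi_{i\ast}(V_i)$ to be all of $N_1(X_{i+1})$, so if the flopping locus of $X_{i+1}$ were simply the $\varphi_i$-image of that of $X_i$ we would get the impossible conclusion $\rho(Y_{i+1})=0$; the flopping locus of $X_{i+1}$ therefore genuinely shrinks, and one must show it shrinks by exactly the right amount --- neither too little (which would leave $\rho(Y_{i+1})\ge 2$) nor, after accounting for a possibly new $A_{X_{i+1}}$-trivial curve $\Gamma$, too much. Pinning down how the centre $E_i$ meets the finitely many flopping curves of $X_i$ is precisely where the weak-star hypotheses --- factoriality, inductive Gorensteinness --- and the explicit geometry of Theorem~\ref{Cut} do the work.
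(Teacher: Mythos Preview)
Your overall architecture---run an MMP on $X_0$, invoke Theorem~\ref{thm:5} to stay weak-star, and prove $\rho(Y_i)=1$ by induction on $i$---matches the paper. The divergence is in the inductive step, and there your argument is incomplete.

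You reduce $\rho(Y_{i+1})=1$ to the claim that the span $W_i\subset V_i$ of the $A_{X_i}$-trivial curves disjoint from $E_i$ has codimension exactly one in $V_i$. You then say this ``follows'' by feeding the curve comparisons into $\varphi_{i\ast}$, but you have not shown it, and in your last paragraph you essentially concede this. The observation that $E_i$ is one irreducible divisor only gives $W_i\subset V_i\cap E_i^{\perp}$, hence $\operatorname{codim}(W_i,V_i)\geq 1$; it does not prevent several independent flopping classes from meeting $E_i$, which would make $W_i$ too small and $\rho(Y_{i+1})\geq 2$. Nothing in the weak-star hypotheses or in Theorem~\ref{Cut} by itself forces the ratio $D\cdot C/(E_i\cdot C)$ to be constant across the flopping curves meeting $E_i$, which is what your approach needs.

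The paper sidesteps this curve-counting entirely by working on the $Y$-side. It observes that $\overline{E}=h_i(E)\subset Y_i$ is Weil but not Cartier (otherwise $\overline{E}$ would be contractible on $Y_i$, impossible with $\rho(Y_i)=1$), and then constructs the symbolic blow-up $f_i\colon Z_i=\Proj\bigoplus_{n\geq 0} h_{i\ast}\mathcal{O}_{X_i}(nE)\to Y_i$, the minimal small partial factorialization making $\overline{E}$ Cartier. By construction $\rho(Z_i)=2$. The proper transform $E'$ is covered by $K_{Z_i}$-negative curves, so there is an extremal contraction $\psi_i\colon Z_i\to Z_{i+1}$ with $\rho(Z_{i+1})=1$. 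A relative MMP on $X_i$ over $Z_{i+1}$ and the Contraction Theorem produce a crepant morphism $g_{i+1}\colon X_{i+1}\to Z_{i+1}$; hence $X_{i+1}$ and $Z_{i+1}$ share the same anticanonical model, and since $\rho(Z_{i+1})=1$ with $A_{Z_{i+1}}$ nef and big, one gets $Y_{i+1}=Z_{i+1}$ and $\rho(Y_{i+1})=1$. In effect the paper \emph{computes} $\rho(Y_{i+1})$ by building it, rather than inferring it from the behaviour of flopping curves; if you try to finish your approach you will find that the missing step ($\dim W_i=\dim V_i-1$) is equivalent to $\rho(Z_i)=2$, which is exactly what the symbolic blow-up supplies.
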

\begin{proof}
 If $\rho(X_i)>1$, there is an
  extremal ray $R_i$ that can be
  contracted. Denote by $\varphi_{i} \colon X_i \to X_{i+1}$ the
  contraction of $R_i$. If $\varphi_i$ is not birational, $X_i=X_n$ is an
  extremal Mori fibre space and there is nothing to prove.

Assume that $\varphi_i$
is birational, Theorem~\ref{thm:5} shows that $X_{i+1}$ is a
weak-star Fano $3$-fold. Let $h_i\colon X_i \to Y_i$ denote its anticanonical map.
  
  I prove that if the Picard rank of $Y_{i}$
  is equal to $1$, then the Picard rank of $Y_{i+1}$ is also $1$.
The statement is trivial when $\varphi_i$ is a small
  contraction; assume that $\varphi_i$ is divisorial and denote by $E$
  its exceptional divisor.  
  
The image $h_i(E)=\overline{E}\subset Y_i$ of $E$ by the anticanonical map 
 is a Weil non-Cartier divisor.  
 If $h_i$ was an isomorphism near $E$, the Cartier divisor
$\overline{E}$ would be covered by $K_{Y_i}$-negative curves and would be contractible in $Y_i$. This is
impossible when $\rho(Y_i)=1$.

Denote by 
$f_i \colon Z_i=\Proj \bigoplus_{n\geq 0}
{h_i}_{\ast}\mathcal{O}_{X_i}(nE)\to Y_i$ a small partial
factorialization of $Y_i$; $Z_i$ is the symbolic blowup of $Y_i$
along the Weil non-Cartier divisor $\overline{E}$. Let $E'$ be the Cartier divisor $(h_i)^{-1}_{\ast}E$ on $Z_i$ and write $h_i=f_i\circ g_i$. Note that $Z_i$ has Picard rank $2$ and that $g_i$ contracts only curves of $X_i/Y_i$ that are disjoint from $E$.
\par
By the projection formula, $E'$ is  covered by $K_{Z_i}$-negative curves: there is an extremal contraction
$\psi_i \colon Z_i  \to Z_{i+1}$
whose exceptional divisor is $E'$. 
Consider the projective and surjective morphism
\[
\psi_i \circ g_i \colon X_i \to Z_{i+1},
\] 
and run a relative MMP on $X_i$ over $Z_{i+1}$.
The Contraction Theorem \cite{KMM87} shows that $\varphi_i$ factorizes 
$\psi_i\circ g_i$ and makes the diagram
\[
\xymatrix{E \subset X_i \ar[r]^{\varphi_i} \ar[d]_{g_i} & X_{i+1}\ar[d]^{g_{i+1}} \\
E'\subset Z_i \ar[d]_{f_i} \ar[r]^{\psi_i} & Z_{i+1} \\
Y_i &
}
\]
commutative. The morphism $g_{i+1}$ is crepant and $X_{i+1}$ and
$Z_{i+1}$ have the same anticanonical model. The Picard rank of
$Z_{i+1}$ is equal to $1$ and $A_{Z_{i+1}}$ is nef and big:
$Y_{i+1}=Z_{i+1}$.
\end{proof}
\begin{lem}
\label{lem:8}
Let $X$ be a weak-star Fano $3$-fold and $Y$ its
anticanonical model. If $\varphi \colon X \to \PS^1$ is an extremal del Pezzo
 fibration of degree $k$, then $k\geq 2$. If in addition, $R(X,A_X)$ is generated in degree $1$, then $k \geq 3$. 
\end{lem}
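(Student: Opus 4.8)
The plan is to rewrite the degree $k$ as the intersection number $(A_X)^2\cdot F$ for a general fibre $F$ of $\varphi$, and then to extract both inequalities from the defining properties of a weak-star Fano $3$-fold. For the first assertion I would argue as follows. By generic smoothness over $\C$, the general fibre $F=\varphi^{-1}(t)$ is a smooth del Pezzo surface with $K_F^2=k$. Since $F$ is a fibre of a morphism to a curve, $\mathcal{O}_F(F)\cong\mathcal{O}_F$, so adjunction gives $-K_F=(-K_X)|_F=(A_X)|_F$ and hence $(A_X)^2\cdot F=(-K_F)^2=k$. Now $F$ is an irreducible divisor on $X$, so, $X$ being inductively Gorenstein (Definition~\ref{dfn:1}), we have $(A_X)^2\cdot F>1$; since $k=K_F^2$ is a positive integer, $k\geq 2$.

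For the second assertion, assume in addition that $R(X,A_X)$ is generated in degree $1$, so that the anticanonical morphism $h=\varphi_{|A_X|}\colon X\to Y$ is a small birational morphism onto $Y$ (and recall that $|A_X|$ is base-point free by definition of weak-star Fano). Since $h$ is small, $\Exc(h)\subset X$ has dimension at most $1$, so the general fibre $F$, being a surface, is not contained in $\Exc(h)$; hence $h$ is an isomorphism over a dense open subset of $\overline F:=h(F)$, the morphism $h|_F\colon F\to\overline F$ is birational, and $\dim\overline F=2$. The morphism $h|_F$ is given by the base-point free subsystem $V:=\im\bigl(H^0(X,A_X)\to H^0(F,(A_X)|_F)\bigr)\subseteq|-K_F|$, and since its image is two-dimensional, $\dim V\geq 3$. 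Suppose for contradiction that $k\leq 2$: then $h^0(F,-K_F)=k+1\leq 3$, which forces $k=2$, $\dim V=3$ and $V=|-K_F|$. But for a del Pezzo surface of degree $2$ the morphism $\varphi_{|-K_F|}$ is the double cover of $\PS^2$, hence $2$-to-$1$, contradicting the birationality of $h|_F=\varphi_{|-K_F|}$. Therefore $k\geq 3$. (The same count rules out $k=1$ directly, since then $\dim V\leq 2$ and $h$ would contract the general fibre, forcing $\dim Y\leq 2$.)

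All the ingredients are standard: generic smoothness of $F$, the identity $(A_X)^2\cdot F=k$, the formula $h^0(-K_F)=k+1$ from Riemann--Roch and Kodaira vanishing on $F$, and the classical description of $|-K_F|$ for del Pezzo surfaces of small degree. The only point I expect to need genuine care is the claim that $h|_F$ is birational for a general fibre: it rests on $h$ being a small modification, so that $\Exc(h)$ is at most a curve, meets the general (two-dimensional) fibre in finitely many points, and $h$ is an isomorphism over the complement of the at most one-dimensional set $h(\Exc(h))\subset Y$. Beyond that, the argument is bookkeeping with Definition~\ref{dfn:1}: inductive Gorensteinness for the first part, base-point freeness and smallness for the second.
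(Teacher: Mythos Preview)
Your proof is correct, and the second assertion is argued exactly as in the paper: the restriction $h|_F$ is given by a subsystem of $|-K_F|$, is birational onto its image because $h$ is small, and this is incompatible with $k=2$ since $\varphi_{|-K_F|}$ is then $2$-to-$1$.

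For the first assertion, however, you take a different route from the paper. The paper uses property~(iv) of Definition~\ref{dfn:1}: since $|A_X|$ is basepoint free, so is the subsystem $|A_X|_{|F}\subseteq|A_F|$, and this rules out $k=1$ because $|-K_F|$ has a base point on a degree~$1$ del Pezzo. You instead invoke property~(iii), inductive Gorensteinness, applied directly to the irreducible divisor $F$ to get $(A_X)^2\cdot F=k>1$. Both arguments are one-liners; yours is arguably more direct here, while the paper's approach has the minor advantage of making the parallel with the second assertion transparent (basepoint freeness and birationality of $\varphi_{|A_F|}$ are exactly what fail for $k=1$ and $k=2$ respectively).
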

\begin{proof}
A general fibre $F$ of $\varphi$ is a nonsingular del Pezzo surface
of degree $k$. As $A_{X\vert F}= A_F$, the linear system $\vert A_X
\vert_{\vert F}$ is naturally a subsystem of $\vert A_F
\vert$. Since $\vert A_X \vert_{\vert F}$ is
basepoint free, the degree $k$ cannot be equal to $1$.  If $R(X,A_X)$ is generated in degree $1$, the morphism $\Phi_{\vert A_X \vert \vert F}$ is birational onto its image and $k \neq 2$. 
\end{proof}

If $\varphi_{n} \colon X_n \to S$ is a conic bundle, \cite{Cut88} shows
that $S$ is nonsingular.  
\begin{dfn}
 \label{dfn:4}
 A conic bundle $\varphi \colon X \to S$ is a \emph{weak Fano conic
  bundle} (resp.~ \emph{weak-star}) if $X$ is a weak
(resp.~ weak-star) Fano $3$-fold and $\rho(X/S)=1$.
\end{dfn}

I recall the following standard result.
\begin{lem}\cite{P05}
\label{lem:9}
If $\varphi \colon X\to S$ is a weak Fano conic bundle, $A_S$ is nef and big.
\end{lem}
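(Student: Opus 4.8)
The plan is to exploit the fact that $X$ is a weak Fano $3$-fold and push forward information about $A_X$ via the conic bundle $\varphi$. First I would recall that since $\varphi\colon X\to S$ is a conic bundle with $\rho(X/S)=1$ and $X$ is terminal Gorenstein, the relative anticanonical divisor $-K_{X/S}=A_X+\varphi^{\ast}K_S$ is $\varphi$-ample; moreover $\varphi_{\ast}\mathcal{O}_X(-K_{X/S})$ is a rank-$3$ locally free sheaf on $S$ and $X$ embeds in $\PS(\varphi_{\ast}\mathcal{O}_X(-K_{X/S}))$ as a divisor in the relative quadric. The key numerical input is the standard formula for a conic bundle: if $\Delta\subset S$ denotes the discriminant curve, then
\[
\varphi_{\ast}\bigl((-K_{X/S})^2\bigr) = -4K_S - \Delta
\]
as divisor classes on $S$ (this is the classical relation, e.g.\ from Mori--Prokhorov theory of conic bundles, and is exactly what \cite{P05} sets up).

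The main step is then to combine this with the hypothesis that $A_X$ is nef and big. Write $A_X = -K_{X/S} - \varphi^{\ast}K_S = -K_{X/S}+\varphi^{\ast}A_S$ where I set $A_S = -K_S$. Squaring and pushing forward, and using the projection formula together with $\varphi_{\ast}(-K_{X/S})=0$ (since the fibres are conics, $(-K_{X/S})\cdot F = 2$ but its pushforward as a cycle class vanishes in the appropriate degree... more precisely one uses $\varphi_{\ast}(-K_{X/S})^2 = -4K_S-\Delta$ and $\varphi_{\ast}((-K_{X/S})\cdot \varphi^{\ast}D)=2D$), one obtains
\[
\varphi_{\ast}\bigl(A_X^2\bigr) = (-4K_S-\Delta) + 4\cdot(-K_S) = -8K_S - \Delta.
\]
Wait — I should double-check the coefficient; the point is that $\varphi_{\ast}(A_X^2)$ is an explicit effective-looking combination of $-K_S$ and $-\Delta$. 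Since $A_X$ is nef, $\varphi_{\ast}(A_X^2)$ is a nef (hence pseudoeffective) $1$-cycle class on the surface $S$; since $A_X$ is big, a suitable power argument shows $\varphi_{\ast}(A_X^2)$ is big as well — concretely, $A_X^2\cdot A_X > 0$ forces $\varphi_{\ast}(A_X^2)$ to have positive self-intersection or positive degree against an ample class, hence $\varphi_{\ast}(A_X^2)$ is nef and big on $S$. Then from the displayed equality $-8K_S = \varphi_{\ast}(A_X^2) + \Delta$, and $\Delta$ effective, one concludes $-8K_S$, hence $A_S=-K_S$, is nef and big. Nefness needs a touch more care: one checks $A_S\cdot C\geq 0$ for every irreducible curve $C\subset S$ by pulling back to $X$, using that $A_X\cdot(\varphi^{\ast}C\text{-component})\geq 0$ and the multisection count, so $A_S\cdot C = \tfrac12\,A_X\cdot(\text{reducible fibre-type cycle over }C)\geq 0$, with the only delicate case being curves contained in $\Delta$.

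The hard part will be making the nefness of $A_S$ genuinely rigorous near the discriminant curve $\Delta$: over a component $C\subset\Delta$ the fibre is a pair of lines (or a double line), and one must argue that $A_X$, being nef on $X$, forces $A_S\cdot C\geq 0$ even though the relevant fibre class is reducible — this is where the hypothesis $\rho(X/S)=1$ and the structure of conic bundles from \cite{Cut88} (nonsingularity of $S$) enter, guaranteeing there are no "extra" negative curves in fibres beyond the two rulings, each of which meets $A_X$ nonnegatively. Bigness is comparatively soft: $\operatorname{vol}(A_X)>0$ and the Leray/projection argument transfer positivity down to $S$. Once nefness is secured, bigness follows from $A_S^2 = (A_S^2\cdot 1)$ computed via $(-8K_S)^2 = (\varphi_{\ast}A_X^2)^2 + \cdots > 0$, or more cleanly by noting a nef divisor on a surface is big iff its self-intersection is positive, and $(-K_S)^2>0$ since $-8K_S-\Delta = \varphi_{\ast}(A_X^2)$ is big. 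Since this is a standard result for which the author explicitly cites \cite{P05}, I would keep the exposition brief and refer to that source for the conic-bundle discriminant formula, spelling out only the nef-and-big transfer argument above.
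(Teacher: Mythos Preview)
Your overall strategy --- use the discriminant formula for the conic bundle to express $A_S$ in terms of $\varphi_*(A_X^2)$ and $\Delta$, then read off bigness and nefness --- is exactly the paper's approach. However, there are two genuine problems.

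First, the displayed formula is misattributed. The standard relation is for the \emph{absolute} anticanonical class, not the relative one:
\[
\varphi_*\bigl(A_X^2\bigr)\;=\;4A_S-\Delta,\qquad\text{i.e.}\quad 4A_S=\varphi_*(A_X^2)+\Delta.
\]
Your claim that $\varphi_*\bigl((-K_{X/S})^2\bigr)=-4K_S-\Delta$ is wrong; a direct computation using $-K_{X/S}=A_X-\varphi^*A_S$, $\varphi_*(A_X\cdot\varphi^*D)=2D$, and $\varphi_*((\varphi^*D)^2)=0$ gives $\varphi_*((-K_{X/S})^2)=-\Delta$. This is why your subsequent coefficient ($-8K_S$) is off; the correct one is $-4K_S$. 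With the right formula, bigness is immediate exactly as you say: $\varphi_*(A_X^2)$ is nef and big (push-forward of the square of a nef and big class), $\Delta$ is effective, hence $4A_S$ is big.

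Second, and more seriously, your nefness argument is left unfinished precisely at the point that carries all the content. You correctly isolate the only dangerous case --- an irreducible component $C\subset\Delta$ --- but ``$\rho(X/S)=1$ and there are no extra negative curves in fibres'' does not by itself yield $A_S\cdot C\geq 0$: over $C$ the fibre class splits, $\Delta\cdot C$ can be negative, and nothing you wrote rules out $A_S\cdot C<0$. The paper closes this gap by a short adjunction-and-integrality trick on $S$: if $A_S\cdot C<0$ then $C\subset\Delta$ and $C^2\leq C\cdot\Delta<0$; then
\[
-2\;\leq\;2p_a(C)-2\;=\;(K_S+C)\cdot C\;\leq\;(K_S+\Delta)\cdot C\;=\;\bigl(3A_S-\varphi_*(A_X^2)\bigr)\cdot C\;\leq\;3A_S\cdot C,
\]
which forces the integer $A_S\cdot C$ to be $\geq 0$, a contradiction. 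This is the missing idea in your sketch; without it (or an equivalent device) the nefness half of the lemma is not proved.
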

\begin{proof}
Recall that the discriminant curve $\Delta$ of a conic bundle satisfies 
\[
4A_{S}= \varphi_{\ast}(A_X)^2 +\Delta,
\]
where $\varphi_{\ast} (A_X)^2$ is nef and big, and therefore $A_S$ is big.
Assume that $A_S$ is not nef and let $C \subset S$ be an irreducible
curve such
that $A_S \cdot C <0 $. The curve $C$ is necessarily contained in
$\Delta$, as $\varphi_{\ast}(A_X)^2$ is nef and $C^2\leq C\cdot \Delta <0$.
By adjunction:
\begin{align*}
-2 & \leq 2p_a(C)-2 \leq (A_S+C)\cdot C\\
  & \leq (A_S+\Delta)\cdot C \leq (3A_S-\varphi_{\ast}(A_X)^2)\cdot C \\
  & \leq3A_S\cdot C.
\end{align*}
This is impossible because $A_S \cdot C$ is an integer. 
\end{proof}
\begin{lem}
  \label{lem:10}
Let $\varphi \colon X \to S$ be a weak-star Fano conic bundle. Then
there is a weak-star Fano conic bundle $\varphi' \colon X' \to S'$,
with $S'= \PS^2, \F_0, \F_1$ or $\F_2$, such that the diagram \[
\xymatrix{ X \ar@{-->}[r] \ar[d]& X' \ar[d]\\
 S \ar[r]  & S' }
\]
is commutative.
\end{lem}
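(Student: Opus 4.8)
The plan is to run the minimal model program on $X$ in a way compatible with the conic bundle structure, contracting the base $S$ down to a minimal model one $(-1)$-curve at a time. First I record the shape of the base: by Lemma~\ref{lem:9} the divisor $A_S$ is nef and big, and by \cite{Cut88} $S$ is nonsingular, so $S$ is a smooth weak del Pezzo surface with $\rho(S)=10-A_S^2$. If $A_S^2\ge 8$ then $S$ is $\PS^2,\F_0,\F_1$ or $\F_2$ and there is nothing to prove; so I may assume $A_S^2\le 7$, in which case $S$ carries a $(-1)$-curve $C$, and I let $\sigma\colon S\to\bar S$ be its contraction, so that $\bar S$ is again a smooth weak del Pezzo, of degree $A_S^2+1$.

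The core of the argument is a two-ray game. Composing $\varphi$ with $\sigma$ gives a morphism $X\to\bar S$ with $\rho(X/\bar S)=2$, so $\NEb(X/\bar S)$ is a $2$-dimensional polyhedral subcone of $\NEb(X)$ (Lemma~\ref{lem:1}), one of whose extremal rays is that of the conic bundle $\varphi$; I run the game on the other ray. Since $A_X$ is nef, any extremal contraction along which $A_X$ is trivial is small and hence a flop (Lemma~\ref{lem:1}), and flops preserve the weak-star Fano category (Theorem~\ref{thm:5}) and keep $\rho(\,\cdot\,/\bar S)=2$; so by termination of threefold flops the game stops, with a final forward ray on which $A_X$ is positive. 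Such a ray is not small, so the final step is divisorial or of fibre type. If it is divisorial, say $\psi\colon X^{(k)}\to X'$: because $X^{(k)}\to\bar S$ contracts the whole cone $\NEb(X^{(k)}/\bar S)$ while $\psi$ contracts only one of its rays, $X^{(k)}\to\bar S$ factors through $\psi$ and yields $\varphi'\colon X'\to\bar S$ with $\rho(X'/\bar S)=1$ and general fibre a conic; $X'$ is weak-star Fano by Theorem~\ref{thm:5}, so $\varphi'$ is a weak-star Fano conic bundle over $S':=\bar S$, with $A_{S'}^2=A_S^2+1$, and the required square commutes with bottom arrow $\sigma$. Since the base degree is at most $9$, after finitely many divisorial steps it reaches $8$ or $9$ and the base becomes one of the four surfaces.

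The remaining case — and the step I expect to be the main obstacle — is a fibre-type final step. Then $X^{(k)}\to\bar S$ factors through a conic bundle $X^{(k)}\to T$ with $T$ a smooth surface and $\rho(T/\bar S)=1$, hence $T=\Bl_q\bar S$ for a point $q$; comparing the fibre dimensions of $X^{(k)}/\bar S$ and $X/\bar S$ over $\bar S$ — which the intervening flops preserve, and which single out the point $\sigma(C)$ as the unique one with a $2$-dimensional fibre — forces $q=\sigma(C)$, so $T\cong\Bl_{\sigma(C)}\bar S\cong S$ and $A_T^2=A_S^2$. Thus the game has performed an elementary transformation of conic bundles over $\bar S$ without lowering the degree of the base, and what must be shown is that such lateral moves cannot recur indefinitely before a divisorial step occurs. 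I would establish this either by a direct analysis of the admissible configurations — controlling, via the weak-star inequality $A_X^2\cdot E>1$ for the vertical divisor $E=\varphi^{-1}(C)$ together with the conic-bundle identity $4A_S=\varphi_*(A_X)^2+\Delta$, how the discriminant curve can meet the $(-1)$-curves of the base — or, more robustly, by organising the whole procedure as a Sarkisov program between Mori fibre spaces and invoking its termination. Granting this, the base degree eventually reaches $8$ or $9$, so that $S'\in\{\PS^2,\F_0,\F_1,\F_2\}$, which completes the proof.
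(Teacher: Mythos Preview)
Your approach—contract a $(-1)$-curve $C\subset S$, run a relative MMP (two-ray game) on $X$ over the contracted base $\bar S$, and iterate—is exactly the paper's. The paper is much terser: it simply asserts that the relative MMP over $S'=\bar S$ yields a weak-star conic bundle $X'\to S'$, then notes that repeated contraction of $(-1)$-curves on the base terminates at $\PS^2$ or at a Hirzebruch surface $\F_a$ with $A_{\F_a}$ nef, hence $a\le 2$. The paper does not separate the divisorial and fibre-type outcomes of the two-ray game, and in particular never discusses the ``lateral'' case you isolate.

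Your identification $q=\sigma(C)$ in the fibre-type case is correct: the divisor $\varphi^{-1}(C)$ survives the flops (they are isomorphisms in codimension~$1$), so the unique $2$-dimensional fibre of $X^{(k)}\to\bar S$ still lies over $\sigma(C)$, and this must be the centre of $T\to\bar S$. The only unfinished point is the termination of such lateral elementary transformations, which you flag explicitly. Invoking Sarkisov termination is valid; more economically, a terminal weak Fano $3$-fold is a Mori dream space, hence carries only finitely many small $\Q$-factorial modifications and only finitely many Mori fibre space structures on them, so the chain of lateral moves is finite. With that observation your argument is complete—and in fact more scrupulous than the paper's own, which leaves this step implicit.
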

\begin{proof}
  If $S$ contains a $-1$-curve, denote its contraction by $S \to S'$. We may run a 
relative MMP of $X$ over $S'$ and this shows that there is
an extremal contraction $X \to X'$ fitting in a diagram as above. In
addition, $X' \to S'$ has a structure of weak-star conic bundle. 
There is a sequence of contractions of $-1$-curves $S \to S_1 \to
\cdots \to S_N$ where $S_N$ is either $\PS^2$
or a $\PS^1$-bundle over a curve $\Gamma$. 
The surface $S_N$ is clearly rational, so that if it is $\PS^1$-bundle over a curve $\Gamma$, $\Gamma$ is
isomorphic to $\PS^1$.
In that case, $S_N$ is a Hirzebruch
surface $\F_a= \PS(\mathcal{O}_{\PS^1}
\oplus\mathcal{O}_{\PS^1}(a))$. Since $A_{S_N}$ is nef, $S_N$ is either
$\F_0=\PS^1\times \PS^1$, $\F_1$ or $\F_2$. 
\end{proof}
\begin{thm}[End product of the MMP]
\label{thm:6}
Let $X=X_0$ be a weak-star Fano $3$-fold whose anticanonical
divisor $Y$ has Picard rank $1$.
The end product of the MMP on $X$ is one of:
\begin{itemize}
\item[1.] $X_n$ is a factorial terminal Fano $3$-fold with $\rho(X_n)=1$. 
\item[2.] $X_n \to \PS^1$ is an extremal del Pezzo fibration of degree $k$, with $2 \leq k$ and $\rho(X_n)=2$.
\item[3.] $X_n \to S$ is a conic bundle over $S=\PS^2, \PS^1
  \times \PS^1, \F_1$ or $\F_2$ and $\rho(X_n)=2$ or $3$.
\end{itemize}
\end{thm}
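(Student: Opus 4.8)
The plan is to feed the output of Lemma~\ref{lem:3} into a case analysis according to the dimension of the base of the resulting Mori fibre space, invoking Lemma~\ref{lem:8} in the fibration case and Lemma~\ref{lem:10} in the conic bundle case. By Lemma~\ref{lem:3}, running the MMP on $X=X_0$ produces a chain of birational extremal contractions $X_0 \to X_1 \to \cdots \to X_n$ in which every $X_i$ is weak-star Fano, every anticanonical model $Y_i$ has Picard rank $1$, and $X_n$ is either a Fano $3$-fold with $\rho(X_n)=1$ or admits an extremal Mori fibre space structure $\varphi_n\colon X_n \to Z$ with $0<\dim Z<3$ (extremality meaning $\rho(X_n)=\rho(Z)+1$). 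If $X_n$ is a Fano $3$-fold with $\rho(X_n)=1$ we are in conclusion~1: $X_n$ is weak-star Fano, hence factorial and terminal, and since $\rho(X_n)=1$ the nef and big divisor $A_{X_n}$ is automatically ample. It remains to treat the case of an extremal Mori fibre space $\varphi_n\colon X_n\to Z$ with $\dim Z\geq 1$.

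Since a weak Fano $3$-fold is rationally connected, $Z$ is rationally connected, hence rational as $\dim Z\leq 2$; in particular $Z\cong\PS^1$ when $\dim Z=1$. In that case $\rho(X_n)=2$, and by generic smoothness (we are in characteristic $0$) a general fibre $F$ of $\varphi_n$ is a smooth del Pezzo surface with $A_{X_n}\vert_F=-K_F$; writing $k=K_F^2$ for the degree, Lemma~\ref{lem:8} applies --- $X_n$ is weak-star Fano, so $\vert A_{X_n}\vert$ and hence its restriction to $F$ is basepoint free --- and yields $k\geq 2$, which is conclusion~2. If instead $\dim Z=2$, put $S=Z$: then $\rho(X_n/S)=1$, so $\varphi_n\colon X_n\to S$ is a weak-star Fano conic bundle, and by \cite{Cut88} its base $S$ is a nonsingular rational surface. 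Lemma~\ref{lem:10} then produces a weak-star Fano conic bundle $\varphi'\colon X'\to S'$ --- obtained from $\varphi_n$ by a further sequence of $K$-negative extremal contractions, namely a relative MMP of $X_n$ over the successive $-1$-curve contractions of $S$ --- with $S'=\PS^2$, $\F_0=\PS^1\times\PS^1$, $\F_1$ or $\F_2$. Since $\rho(X')=\rho(S')+1$, this rank is $2$ when $S'=\PS^2$ and $3$ when $S'$ is a Hirzebruch surface; after renaming $X',S'$ as $X_n,S$, this is conclusion~3.

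Essentially all of the substantive work --- preservation of the weak-star Fano category under the MMP and the accompanying Picard rank bookkeeping (Theorem~\ref{thm:5}, Lemma~\ref{lem:3}), the degree bound $k\geq 2$ (Lemma~\ref{lem:8}), and the reduction of a conic bundle base to the four standard surfaces (Lemma~\ref{lem:10}) --- is already in place, so the only real task here is the assembly above. The one point deserving a word of care is that in conclusion~3 the model $X_n\to S$ is reached only after the extra birational modifications of Lemma~\ref{lem:10}; thus the theorem should be read as describing the end product of the MMP up to these further (still weak-star, still crepant over the anticanonical model) modifications --- which is precisely what the restricted list of admissible surfaces $\PS^2,\PS^1\times\PS^1,\F_1,\F_2$ in the statement reflects.
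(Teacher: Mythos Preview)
Your proposal is correct and is precisely the assembly the paper intends: the theorem is stated in the paper without proof, as an immediate consequence of Lemma~\ref{lem:3}, Lemma~\ref{lem:8}, the remark following it that $S$ is nonsingular by \cite{Cut88}, and Lemma~\ref{lem:10}. Your case analysis according to the dimension of the base, together with the observation that in conclusion~3 one may have to pass to the further model $X'\to S'$ produced by Lemma~\ref{lem:10}, is exactly the right reading.
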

\subsection{A bound on the defect of some terminal Gorenstein Fano $3$-folds}

\begin{nt}
I say that a surface  $S \subset Y_{2g-2}$ is a plane (resp.~ a quadric) if its image by the anticanonical map is a plane (resp.~ a quadric) in $\PS^{g+1}$, that is when $(A_Y)^2\cdot S=1$ (resp.~ $2$). 
\end{nt}

\label{sec:bound-defect-some}
\begin{lem} 
\label{lem:5}
Let $X$ be a weak-star Fano $3$-fold whose
anticanonical model $Y$
does not contain an irreducible quadric surface. Then,
every divisorial extremal contraction $\varphi$ increases the
anticanonical degree
$(-K)^3=A^3$ by at least $4$.  
\end{lem}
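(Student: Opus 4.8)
The statement concerns the change in anticanonical degree $A^3 = (-K)^3$ under a single divisorial extremal contraction $\varphi \colon X \to X'$, where $X$ is weak-star Fano with anticanonical model $Y$ containing no irreducible quadric surface. By Theorem~\ref{thm:5}, $X'$ is again weak-star Fano, and by Theorem~\ref{Cut} (Mori--Cutkosky) there are only finitely many types of divisorial contraction to consider: E1 (contraction of a surface $E$ to a curve $\Gamma$), and E2--E5 (contraction of $E$ to a point). The plan is to go through each case and show $A_{X'}^3 - A_X^3 \geq 4$, using the numerical relations already recorded, together with the key constraint that weak-star Fano forces $(A_{X'})^2 \cdot S' \geq 2$ for every irreducible divisor $S'$ (Definition~\ref{dfn:1}(iii) and Step 3 of the proof of Theorem~\ref{thm:5}).

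First I would treat the point cases E2--E5, which are the easier ones. For E2 (blowup of a smooth point) the standard formula gives $A_{X'}^3 - A_X^3 = 8$; for E3 (blowup of an ordinary double point, $E \simeq \PS^1\times\PS^1$, $\mathcal{O}_E(E) \simeq \mathcal{O}(-1,-1)$) one computes the jump to be $4$; for E4 ($E$ a quadric cone, blowup of a $cA_{n-1}$ point) the jump is again computed directly from $A_X = \varphi^*A_{X'} - E$ and adjunction on $E$, and is at least $4$. Case E5 ($E \simeq \PS^2$ with $\mathcal{O}_E(E) \simeq \mathcal{O}_{\PS^2}(-2)$, contracting to a non-Gorenstein point of index $2$) is already excluded for weak-star Fano $X$ by Remark~\ref{rweak}(1), so nothing is needed there. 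In each surviving case the computation is the routine manipulation of $A_X^3 = (\varphi^*A_{X'} - aE)^3$ using $\varphi^*A_{X'}\cdot E^2$, $E^3$, and the description of $(E,\mathcal{O}_E(E))$.

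The main case — and the place where the no-quadric hypothesis is used — is E1. Here Lemma~\ref{lem:11} gives
\begin{equation*}
A_{X'}^3 - A_X^3 = 2(A_{X'}\cdot \Gamma) + 2 - 2p_a(\Gamma), \qquad (A_X)^2\cdot E = A_{X'}\cdot\Gamma + 2 - 2p_a(\Gamma).
\end{equation*}
Combining these, $A_{X'}^3 - A_X^3 = (A_{X'}\cdot\Gamma) + (A_X^2\cdot E)$. Now $A_X^2\cdot E \geq 2$ because $X$ is inductively Gorenstein, so it suffices to show $A_{X'}\cdot \Gamma \geq 2$, i.e.\ that $\Gamma$ is not an anticanonical line in $X'$. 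Suppose instead $A_{X'}\cdot\Gamma \leq 1$. Since $A_{X'}$ is nef (Step 1 of Theorem~\ref{thm:5}) and ample outside finitely many curves, and $\Gamma$ is the centre of a divisorial contraction, one has $A_{X'}\cdot\Gamma \geq 1$, so $A_{X'}\cdot\Gamma = 1$ and, from the second relation, $A_X^2\cdot E = 3 - 2p_a(\Gamma) \leq 3$; together with $A_X^2\cdot E \geq 2$ this forces $p_a(\Gamma) = 0$, a smooth rational curve with $A_{X'}\cdot\Gamma = 1$ — an anticanonical line. The task is then to produce, from such a line $\Gamma \subset X' \subset \PS^{g+1}$ (using that $A_{Y'}$ is very ample, or handling the low-index / non-very-ample exceptions via Lemma~\ref{lem:4} and Proposition~\ref{pro:bpf}), a surface $S'$ on $X'$ with $(A_{X'})^2\cdot S' = 2$, i.e.\ an irreducible quadric, which descends to a quadric on $Y$ — contradicting the hypothesis. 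The natural candidate is the union of anticanonical lines through $\Gamma$, or the locus swept out by $\Gamma$ under the relevant family, and one checks its anticanonical degree is $2$; this surface pushes forward to $Y$ via the small modification $X' \dashrightarrow Y$ without changing $(A)^2\cdot S$.

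**Main obstacle.** The delicate point is the E1 case when $A_{X'}\cdot\Gamma = 1$: exhibiting the irreducible quadric on $X'$. One must rule out that $\Gamma$ could be a line that does \emph{not} sweep out a quadric, and one must ensure the surface produced is irreducible and not itself a plane (which would be the wrong conclusion). I expect this to require a short argument with the geometry of the anticanonical embedding — e.g.\ that the exceptional divisor $E$ (a scroll over $\Gamma \simeq \PS^1$, since $A_X^2\cdot E$ is small and $E$ has $cA_n$-singularities along fibres by Theorem~\ref{Cut}) maps to a surface of degree $A_{X'}^2\cdot E - (\text{correction}) = 2$ in the anticanonical model of $X'$, hence a quadric on $Y$ — together with a check that the small modifications involved preserve the relevant intersection numbers. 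The point cases E2--E4 are purely computational and should present no difficulty.
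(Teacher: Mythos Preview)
Your E3 and E4 computations are wrong: in both cases $a=1$, $\varphi^\ast A_{X'}|_E = 0$ (the divisor is contracted to a point), and $E^3 = c_1(\mathcal{O}_E(E))^2 = 2$, so
\[
A_{X'}^3 - A_X^3 \;=\; a^3 E^3 \;=\; 2,
\]
not $4$. These cases therefore \emph{cannot} be handled by direct computation and must instead be \emph{excluded} by the hypothesis. This is exactly where the no-quadric assumption enters: for E3 and E4 one has $(A_X)_{|E} = -K_E + E_{|E}$, giving $(A_X)^2\cdot E = 2$; since $h\colon X\to Y$ is small, $h(E)\subset Y$ is an irreducible surface with $A_Y^2\cdot h(E)=2$, i.e.\ a quadric --- contradicting the hypothesis. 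So only E1 and E2 can occur.

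The same idea dissolves your ``main obstacle'' in the E1 case. You only use the weak-star bound $(A_X)^2\cdot E\geq 2$ and then try to force $A_{X'}\cdot\Gamma\geq 2$ by manufacturing a quadric swept out by lines through $\Gamma$; this construction is neither needed nor clearly available. Apply the no-quadric hypothesis to the exceptional divisor $E$ itself: its image $h(E)\subset Y$ is an irreducible surface, so $(A_X)^2\cdot E\neq 2$, hence $(A_X)^2\cdot E\geq 3$. Then from relation~(2) of Lemma~\ref{lem:11}, $A_{X'}\cdot\Gamma = (A_X)^2\cdot E - 2 + 2p_a(\Gamma)\geq 1$, and your own identity gives
\[
A_{X'}^3 - A_X^3 \;=\; A_{X'}\cdot\Gamma + (A_X)^2\cdot E \;\geq\; 1+3 \;=\; 4.
\]
The missing idea throughout is simply that the exceptional divisor is itself an irreducible surface on $Y$, so the hypothesis constrains $(A_X)^2\cdot E$ directly; this is how the paper argues.
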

\begin{proof}
Let $\varphi \colon X \to X'$ be a divisorial contraction. 
I use the notation of Mori-Cutkosky (see Theorem~\ref{Cut}) for the types of divisorial contractions.
The contraction $\varphi$ is of type E$1$ or E$2$. Indeed, $\varphi$ is not of type E$5$ because $X$ is weak-star Fano and it is not of type E$3$ or E$4$, because $Y$ does not contain an irreducible quadric surface.

If $\varphi$ is of type E$2$, $A_X=
\varphi^{\ast}(A_{X'})-2E$ and the degree increases by precisely $8$.

If $\varphi$ is of type E$1$, I claim that $A_{X'}^3 \geq A_X^3+4$.
Let $E$ be the exceptional divisor of $\varphi$ and $\Gamma$ its centre on $X'$.
As $Y$ does not contain an irreducible quadric surface, $(A_X)^2\cdot E\geq 3$, and the result follows from Equations $(1,2)$ in Lemma~\ref{lem:11}. 

\end{proof}

\begin{cor}
\label{cor:5}
Let $X$ be a weak-star Fano $3$-fold and $\varphi \colon X \to X'$ a
birational extremal contraction. Denote by $Y$ and $Y'$ the
anticanonical models of $X$ and $X'$. If $Y'$ contains an irreducible quadric surface $Q'$, then $Q'$ is disjoint from the centre of $\varphi$ and
$Y$ also contains an irreducible quadric surface $Q$.  
\end{cor}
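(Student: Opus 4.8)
The plan is to reduce to a divisorial contraction and then to squeeze the anticanonical degree of the proper transform of $Q'$ between $2$ and $2$, using the comparison of restricted anticanonical linear systems established in Step~3 of the proof of Theorem~\ref{thm:5} together with the fact that $X$ is inductively Gorenstein. If $\varphi$ is small, it is a flopping contraction by Lemma~\ref{lem:1}, hence crepant; then $A_X=\varphi^{\ast}A_{X'}$, the $3$-folds $X$ and $X'$ have the same anticanonical model, $Y=Y'$, and there is nothing to prove. So from now on assume that $\varphi$ is a divisorial contraction, with exceptional divisor $E$.

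By Theorem~\ref{thm:5}, $X'$ is again a weak-star Fano $3$-fold, so its anticanonical map $h'\colon X'\to Y'$ is a small modification. Let $S'=(h')^{-1}_{\ast}Q'$ be the proper transform of $Q'$ on $X'$; it is an irreducible divisor with $(A_{X'})^2\cdot S'=(A_{Y'})^2\cdot Q'=2$. Let $S=\varphi^{-1}_{\ast}S'$ be its proper transform on $X$, again irreducible. Applying the argument of Step~3 of the proof of Theorem~\ref{thm:5} --- write $A_X=\varphi^{\ast}(A_{X'})-aE$ and restrict to $S$ --- exhibits $\vert A_X\vert_{\vert S}$ as a subsystem of $\vert A_{X'}\vert_{\vert S'}$, so that $(A_X)^2\cdot S\le (A_{X'})^2\cdot S'=2$, with strict inequality as soon as $S'$ meets the centre of $\varphi$. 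On the other hand $(A_X)^2\cdot S$ is an integer and $X$ is inductively Gorenstein, so $(A_X)^2\cdot S\ge 2$. Hence $(A_X)^2\cdot S=2$ and the inequality above is an equality, which forces $S'$ --- and therefore $Q'$ --- to be disjoint from the centre of $\varphi$. Finally, the anticanonical map $h\colon X\to Y$ contracts no divisor, so $Q:=h_{\ast}S$ is an irreducible surface with $(A_Y)^2\cdot Q=(A_X)^2\cdot S=2$, i.e.\ $Y$ contains an irreducible quadric surface.

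The one point that needs care is the equality case of $(A_X)^2\cdot S\le (A_{X'})^2\cdot S'$: one must check that the correction term, which is a non-positive multiple of the intersection number of $S'$ with the centre of $\varphi$, is strictly negative precisely when $S'$ meets that centre. For a contraction to a curve $\Gamma$ this follows from $\varphi_{\ast}E=0$ together with $\mathcal{O}_E(E)$ restricting to $\mathcal{O}_{\PS^1}(-1)$ on a general fibre of $E\to\Gamma$; for a contraction to a point it follows from the explicit list of pairs $(E,\mathcal{O}_E(E))$ in Theorem~\ref{Cut}. Once this is established, the corollary is immediate from the elementary squeeze $2\le (A_X)^2\cdot S\le (A_{X'})^2\cdot S'=2$.
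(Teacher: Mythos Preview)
Your proof is correct and follows exactly the approach the paper intends: the paper's own proof is the single line ``This is a straightforward consequence of Step~4 of the proof of Theorem~\ref{thm:5}'' (evidently a typo for Step~3, which is the step you invoke), and you have simply unpacked that reference by running the squeeze $2\le (A_X)^2\cdot S\le (A_{X'})^2\cdot S'=2$ and reading off disjointness from the equality case. Your extra paragraph justifying strictness when $S'$ meets the centre is a welcome addition, since Step~3 of Theorem~\ref{thm:5} only states the non-strict inequality explicitly; the cleanest way to phrase it is perhaps to observe that the defect $(A_{X'})^2\cdot S'-(A_X)^2\cdot S$ equals $a(\varphi^\ast A_{X'}+A_X)\cdot(E\cdot S)$, which is non-negative by nefness and strictly positive whenever $E\cdot S\neq 0$ because $A_X|_E$ is ample for contractions to a point and positive on the fibre class for contractions to a curve.
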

\begin{proof}
This is a straightforward consequence of Step $4.$ of the proof of Theorem~\ref{thm:5}. 
\end{proof}
\begin{cor}
  \label{cor:1} 
Let $Y$ be a terminal Gorenstein Fano $3$-fold
of index $1$, Picard rank $1$ and of genus $g$.
If $Y$ does not contain a quadric or a plane, the defect of $Y$ is
bounded by $\big[\frac{12-g}{2}\big]+4$. 
\end{cor}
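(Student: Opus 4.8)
The plan is to reduce the bound on the defect of $Y$ to a bound on the Picard rank $\rho(X_n)$ of the end product of the MMP run on a small factorialization $X$ of $Y$, together with a count of how many divisorial contractions the MMP can perform. Since $\rk \Cl Y = \rho(X)$ and each divisorial contraction $\varphi_i\colon X_i \to X_{i+1}$ in the MMP of Lemma~\ref{lem:3} drops the Picard rank by exactly $1$ while flops leave it unchanged, I have $\rho(X) = \rho(X_n) + (\text{number of divisorial contractions})$. So it suffices to bound each of these two quantities.

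First I would handle the final $3$-fold $X_n$. By Theorem~\ref{thm:6}, $X_n$ is either a $\rho=1$ Fano, an extremal del Pezzo fibration over $\PS^1$ with $\rho=2$, or a conic bundle over $\PS^2,\F_0,\F_1,\F_2$ with $\rho\le 3$; in all cases $\rho(X_n)\le 3$. Moreover, by Corollary~\ref{cor:5} the hypothesis that $Y$ contains no quadric is inherited by every $Y_i$, and by Step~3 of the proof of Theorem~\ref{thm:5} no $Y_i$ (hence no $X_i$) contains a plane either; this rules out various low-degree end products and, more importantly, forces Lemma~\ref{lem:5} to apply at every divisorial step. The key numerical input is then Lemma~\ref{lem:5}: since $Y$ (and every intermediate $Y_i$, by Corollary~\ref{cor:5}) contains no quadric, each divisorial contraction raises $A^3$ by at least $4$. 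The anticanonical degree of $X=X_0$ is that of a terminal Gorenstein Fano of genus $g$, namely $A_X^3 = 2g-2$, and the degree of $X_n$ is bounded above by the maximal anticanonical degree occurring in the Iskovskikh--Mori--Mukai classification of the relevant $\rho(X_n)$ (for $\rho=1$ this is $\PS^3$ with degree $64$, i.e.\ ``$g=33$''; the fibration/conic bundle cases give smaller or comparable bounds that must be checked against the classification \cite{Isk77,Isk78,MM82}). Hence
\[
4\cdot(\text{number of divisorial contractions}) \le A_{X_n}^3 - A_X^3,
\]
so the number of divisorial contractions is at most $\big[\tfrac{A_{X_n}^3-(2g-2)}{4}\big]$. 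Plugging in the worst case $A_{X_n}^3$ and combining with $\rho(X_n)\le 3$ yields, after arithmetic, $\sigma(Y)=\rho(X)-1 \le \big[\tfrac{12-g}{2}\big]+4$; one must verify that the constant works out, i.e.\ that $\big[\tfrac{64-(2g-2)}{4}\big] + 3 - 1 = \big[\tfrac{66-2g}{4}\big]+2 = \big[\tfrac{12-g}{2}\big]+4$ up to the rounding, and that the del Pezzo fibration and conic bundle cases (which have smaller $A_{X_n}^3$ but possibly $\rho(X_n)=2,3$) do not beat this.

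The main obstacle I anticipate is not the Picard-rank bookkeeping but pinning down precisely which end products $X_n$ are actually attainable under the no-plane, no-quadric hypothesis, and what their exact maximal degrees are: one cannot simply take $A_{X_n}^3 = 64$ for free, because reaching $\PS^3$ as the end product while having started from a no-quadric $Y$ imposes — via Lemma~\ref{lem:5} applied in reverse and the structure of E$1$/E$2$ contractions in Theorem~\ref{Cut} — constraints on the sequence of centres $\Gamma_i$ and the genera $p_a(\Gamma_i)$ appearing in Lemma~\ref{lem:11}. Concretely, I would go through the list of $\rho=1$ terminal Gorenstein Fanos of large degree ($V_5, V_4, Q^3, \PS^3$ and the index-$1$ ones of genus $\ge$ some bound) and, for each, check using equations $(1)$--$(4)$ of Lemma~\ref{lem:11} together with $(A_X)^2\cdot E \ge 3$ whether it can sit at the end of an MMP emanating from a no-quadric $Y$ of genus $g$; the arithmetic of $A^3 \ge 2g-2$ and the degree increments then produces the stated bound, with the floor function absorbing the parity of $g$. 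A secondary technical point is ensuring Lemma~\ref{lem:5} genuinely applies at \emph{every} step: this needs Corollary~\ref{cor:5} to propagate the no-quadric condition forward and Step~3 of Theorem~\ref{thm:5}'s proof to propagate the no-plane condition, so that no intermediate $Y_i$ acquires a quadric or plane that would allow a type E$3$ or E$4$ contraction with a smaller degree jump.
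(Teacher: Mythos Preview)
Your overall framework matches the paper's: pass to a weak-star small factorialization $X$, run the MMP of Lemma~\ref{lem:3}, and bound $\rho(X)=\rho(X_n)+(\text{number of divisorial contractions})$ using Lemma~\ref{lem:5}. But the arithmetic you offer is wrong: the asserted equality $\big[\tfrac{66-2g}{4}\big]+2=\big[\tfrac{12-g}{2}\big]+4$ fails badly (for $g=3$ the left side is $17$, the right side $8$). The crude estimate ``$A_{X_n}^3\le 64$ and $\rho(X_n)\le 3$'' yields a bound of order $\tfrac{33-g}{2}$, not the stated one, and no rounding repairs a discrepancy of that size. You correctly sense in your second paragraph that a finer analysis of the endpoint is needed, but you do not identify what makes it work.

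The paper supplies two ingredients you are missing. First, because $\rho(Y_i)=1$ for every $i$, each intermediate degree $A_{X_i}^3$ must lie in the short list coming from the classification of Picard-rank-$1$ Fanos: $\{2,4,\ldots,18,22\}$ in index $1$, $\{8,16,24,32,40\}$ in index $2$, and $54,64$ in indices $3,4$. (It is the $\rho(Y_i)=1$ list that is relevant here, not the $\rho(X_n)$ classification as you suggest.) This already shortens the allowed chains substantially. Second, and decisively, the paper couples the index of $Y_n$ to $\rho(X_n)$ via Shin's results and Lemma~\ref{lem:2}/Remark~\ref{rem:3}: if $i(Y_n)\ge 2$ then $\rho(X_n)\le 2$, and if $i(Y_n)=4$ then $\rho(X_n)=1$. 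Together with the observations that the source of any divisorial contraction has index at most $2$ and that an index-$2$ source forces an even-index target, this produces exactly the trade-off that closes the gap: long chains of divisorial contractions can only terminate at high degree, but high degree forces high index and hence small $\rho(X_n)$. The paper packages this as ``at most $\big[\tfrac{12-g}{2}\big]+3$ divisorial contractions if $A_{Y_n}^3\le 40$, at most $\big[\tfrac{12-g}{2}\big]+4$ otherwise,'' and then matches each case against the corresponding bound on $\rho(X_n)$. Your proposal lacks this index--Picard-rank coupling, which is the actual mechanism behind the bound.
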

\begin{proof}
Let $h \colon X \to Y$ be a factorialization of $Y$; $X$ is a weak-star Fano $3$-fold. I prove that the Picard rank of $X$ is at most at most $\big[\frac{12-g}{2}\big]+5$.
\par
Run a MMP on $X$. I follow the notation of Lemma~\ref{lem:3}. The Picard rank of $X$ equals the number of divisorial contractions that occur during the MMP plus the Picard rank of the final $3$-fold $X_n$.  Lemma~\ref{lem:3} shows that at each intermediate step,  $Y_i$ has Picard rank $1$, and therefore $A_{X_i}^3=A_{Y_i}^3$ is equal to $2g-2$ with $2\leq g\leq 10$ or $g=12$ if $i(X_i)=1$, $8d$ with $1\leq d\leq 5$ if $i(X_i)=2$, $54$ if $i(X_i)=3$ and $64$ if $i(X_i)=4$. Note also that if $X_i \to X_{i+1}$ is a divisorial contraction, the index of $X_i$ is at most $2$ and if $i(X_i)=2$,  $i(X_{i+1})$ is divisible by $2$. 
By lemma~\ref{lem:5}, the number of divisorial contractions is bounded by $\big[\frac{12-g}{2}\big]+3$ if $A_{Y_n}^3\leq 40$ and by $\big[\frac{12-g}{2}\big]+4$ otherwise. The Picard rank of $X_n$ is at most $3$ and if $i(Y_n)=4$ (resp.~  if $i(Y_n)\geq 2$), $\rho(X_n)=1$ (resp.~ $\rho(Y_n)\leq 2$) \cite{Sh89}. The bound on the defect follows. 
\end{proof}
\begin{cor} 
\label{cor:2} 
 Let $Y$ be a terminal Gorenstein Fano $3$-fold of index $1$, Picard rank $1$ and of genus $g$. If $Y$ contains a
  quadric but does not contain a plane, the defect of $Y$ is bounded by $\big[\frac{12-n-g}{2}\big]+4+n$, where $n=\min\{\lfloor \frac{g+1}{3}\rfloor, 10-g\}$.
\end{cor}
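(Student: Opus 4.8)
The plan is to follow the strategy of Corollary~\ref{cor:1}, this time keeping careful track of those divisorial contractions of the MMP whose exceptional divisor is contracted onto a quadric surface. First I would choose a small factorialization $h\colon X\to Y$. As $Y$ contains no plane, Lemma~\ref{lem:4} shows that $X$ is a weak-star Fano $3$-fold; moreover $\rk\Cl Y=\rho(X)$ while $\rho(Y)=1$. Running the MMP on $X$ as in Lemma~\ref{lem:3} produces a chain
\[
X=X_0\xrightarrow{\ \varphi_0\ }X_1\xrightarrow{\ \varphi_1\ }\cdots\xrightarrow{\ \varphi_{N-1}\ }X_N
\]
of birational extremal contractions of weak-star Fano $3$-folds, each $X_i$ having anticanonical model $Y_i$ of Picard rank $1$, terminating (Theorem~\ref{thm:6}) in a Fano $3$-fold of Picard rank $1$ or an extremal Mori fibre space, so that $\rho(X_N)\le 3$. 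Then $\rk\Cl Y=\rho(X)=N+\rho(X_N)$, and it remains to bound $N$ and $\rho(X_N)$.

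I would then sort the $\varphi_i$ by the jump $A_{X_{i+1}}^3-A_{X_i}^3$ of anticanonical degree. By the intersection relations of Lemma~\ref{lem:11} and the parity argument in the proof of Lemma~\ref{lem:5}, this jump is always even; writing $E_i=\Exc\varphi_i$, it is at least $4$ unless $(A_{X_i})^2\cdot E_i=2$, in which case it equals $2$. The latter occurs exactly for contractions of type E$3$ or E$4$, or an E$1$ contraction whose centre is a curve $\Gamma$ with $A_{X_{i+1}}\cdot\Gamma=0$, and then the image of $E_i$ in $Y_i$ is a quadric surface. Call such a $\varphi_i$ \emph{cheap} and the remaining ones \emph{expensive}. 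By the contrapositive of Corollary~\ref{cor:5}, the property ``$Y_i$ contains a quadric'' holds on an initial segment $\{0,\dots,j\}$ of indices, non-empty because $Y=Y_0$ contains one; hence every cheap contraction is among $\varphi_0,\dots,\varphi_j$, and for $i\le j$ the model $Y_i$ has index $1$ (no Weil divisor $S$ on an index $\ge 2$ Fano satisfies $A_{Y_i}^2\cdot S=2$, since then the square of the index would divide $2$), so $A_{Y_i}^3=2g_i-2$ for $g_i$ the genus of $Y_i$.

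The heart of the matter is to bound the number $c$ of cheap contractions by $n=\min\{\lfloor\tfrac{g+1}{3}\rfloor,\,10-g\}$. On one hand every jump is at least $2$, so $A_{Y_i}^3\ge 2g-2+2i$ and $g_i\ge g+i$ whenever $\varphi_i$ is cheap; combined with the fact — read off the lists of \cite{Isk77,Isk78,MM82}, using Theorem~\ref{thm:7} to pass to the terminal Gorenstein degenerations — that an index $1$, Picard-rank $1$ Fano $3$-fold carrying a quadric has genus at most $9$, this forces $g+i\le 9$ and hence $c\le 10-g$. On the other hand, using that a cheap contraction raises the genus by exactly $1$, and that by Corollary~\ref{cor:5} the quadrics met along the MMP have proper transforms mutually disjoint and disjoint from the later centres, a case-by-case examination of the small-genus Fano $3$-folds limits the length of a maximal run of quadric-carrying models and yields $c\le\lfloor\tfrac{g+1}{3}\rfloor$. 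This classification bookkeeping — deciding precisely which $Y_i$ can carry a quadric, and how quadrics propagate under Corollary~\ref{cor:5} — is the step I expect to be the main obstacle; it refines the purely numerical degree analysis used in Corollary~\ref{cor:1}.

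To finish, I would bound the expensive contractions together with $\rho(X_N)$. After the $c$ cheap contractions the anticanonical model has genus (or, once the index is $2$, degree) at least $g+c$, and every subsequent contraction is expensive; feeding this into the degree-chain analysis from the proof of Corollary~\ref{cor:1} bounds the number of expensive contractions by $\lfloor\tfrac{12-g-c}{2}\rfloor+3$ or $+4$ according to whether the final anticanonical degree is $\le 40$ or larger, with the matching estimate $\rho(X_N)\le 2$ (respectively $\rho(X_N)\le 1$ when $i(Y_N)=4$) from \cite{Sh89}. Since $c+\lfloor\tfrac{12-g-c}{2}\rfloor$ is non-decreasing in $c$ and $c\le n$, one obtains
\[
\rk\Cl Y=\rho(X)=N+\rho(X_N)\le c+\Big[\tfrac{12-g-c}{2}\Big]+5\le n+\Big[\tfrac{12-n-g}{2}\Big]+5,
\]
so the defect $\sigma(Y)=\rk\Cl Y-\rk\Pic Y$ is bounded by $\big[\tfrac{12-n-g}{2}\big]+4+n$, as claimed.
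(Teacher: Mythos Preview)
Your approach is essentially the paper's: run the MMP on a weak-star small factorialization, separate the divisorial contractions into ``cheap'' (degree jump $2$, exceptional divisor a quadric) and ``expensive'' (degree jump $\ge 4$), bound the number $c$ of cheap ones by $n$, and then feed the remainder into the degree analysis of Corollary~\ref{cor:1}. The bookkeeping you set up is correct, and your final inequality is the right one.

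The substantive difference is in how the bound $c\le n$ is obtained, and here there is a genuine gap. You anticipate a case-by-case examination of small-genus Fanos and flag it as the main obstacle; the paper avoids this entirely. Using Corollary~\ref{cor:5} exactly as you do, the paper pulls each quadric appearing at stage $i$ back to a quadric in $Y$, and observes that the resulting quadric surfaces $h(\widetilde Q_i)\subset Y\subset\PS^{g+1}$ are pairwise disjoint; a direct count of disjoint quadric surfaces in $\PS^{g+1}$ then gives $c\le\lfloor(g+1)/3\rfloor$ with no classification input whatsoever. For the other half, the paper uses the same disjointness to \emph{reorder} the MMP so that all quadrics are contracted first: then the anticanonical models at the cheap steps all have index $1$, their genera increase by exactly one each time, and the sequential constraint $g,g+1,\dots\in\{2,\dots,10,12\}$ forces $c\le 10-g$ directly. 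You do not need your claim that an index-$1$, Picard-rank-$1$ Fano carrying a quadric has genus at most $9$ --- this is not something one can simply read off the smooth classification via Theorem~\ref{thm:7}, since smooth Fanos with $\rho=1$ contain no such Weil divisor at all.

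In short, the missing idea is precisely at the step you singled out: you already have the mutual disjointness of the quadrics but do not exploit it, either to count them inside the ambient $\PS^{g+1}$ or to reorder the MMP so that the genus argument becomes a one-line sequential constraint. Once you make those two moves, the case analysis you were bracing for disappears and the rest of your write-up goes through unchanged.
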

\begin{proof}
Let $h \colon X \to Y$ be a factorialization of $Y$; $X$ is a
weak-star Fano $3$-fold. I prove that the Picard rank of $X$ is at
most $\big[\frac{12-n-g}{2}\big]+5+n$, where $n=\min\{\lfloor \frac{g+1}{3}\rfloor, 10-g\}$.
\par 
Run a MMP on $X$. If $Q \subset Y$ is an irreducible
reduced quadric lying on $Y$,  the index of $Y$ is $1$. Denote by $\widetilde{Q}$ its proper
transform on $X$. The quadric $\widetilde{Q}$ is negative on a
$K$-negative extremal ray $R$, the class of the proper transform of a
ruling of $Q$, and $R$ may be contracted. Let $\varphi \colon X \to X'$
be the contraction of $R$. If $\widetilde{Q}$ is contracted to a
point, the anticanonical degree increases by $2$. If $\widetilde{Q}$
is contracted to a curve $\Gamma$, by Lemma~\ref{lem:11}, $A_{X'}
\cdot \Gamma=2p_a(\Gamma)$ and
$A_X^3=A_{X'}^3-2(p_a(\Gamma)+1)$. The contraction $\varphi$ increases
the anticanonical degree by at least $2$. 
\par 
By Lemma~\ref{lem:5}, if $Y_i$ contains a quadric $Q_i$, $Q_i$ does not intersect the centre of any previous extremal contraction. Denote by  $\widetilde{Q_i}$ the proper transform of $Q_i$ on $X$, $h(\widetilde{Q_i})$ is a quadric. We may therefore assume that quadrics are contracted first. While quadrics are contracted, the index of $X_i$ is $1$, and as in the proof of Corollary~\ref{cor:1}, the anticanonical degrees $A_{X_i}^3$ can only be of the form $2g-2$, with $2\leq g\leq 10$ or $g=12$. There are hence at most $10-g$ divisorial contractions that decrease the anticanonical degree by $2$. Furthermore, the quadrics $h(\widetilde{Q_i})\subset Y \subset \PS^{g+1}$ are disjoint, so that at most $\lfloor \frac{g+1}{3}\rfloor$ quadrics may be contracted. If $X_i\to X_{i+1}$ is the contraction of a quadric, $i(X_i)=i(X_{i+1})=1$ and $A_{Y_{i+1}}^3=2g-2$ with $2\leq g\leq 10$ or $g=12$. The result then follows from Corollary~\ref{cor:1}.
 \end{proof}
\subsection{Higher index Fano $3$-folds}
\label{sec:higher-index-fanos}
Shin classifies canonical Gorenstein Fano $3$-folds of index
greater than $1$\cite{Sh89}. If $Y$ has terminal Gorenstein
singularities and index $2,3$ or $4$, it has Picard rank $1$ and
if $X \to Y$ is a small factorialization, $X$ is a weak-star Fano
$3$-fold. 

\begin{lem}
\label{lem:2}
Let $X$ be an index $2$ weak-star Fano $3$-fold and let $\varphi \colon
X \to X'$ be the contraction of an extremal ray. Then, one of the
following holds:
\begin{itemize}
\item[1.] $\varphi$ is birational, and $\varphi$ is either a flop or an E$2$
contraction. 
\item[2.] $\varphi\colon X\to S$ is an \'etale $\PS^1$-bundle.
\item[3.] $\varphi\colon X \to \PS^1$ is a del Pezzo fibration of degree $8$,
  and $\varphi$ is a quadric bundle.  
\end{itemize}
\end{lem}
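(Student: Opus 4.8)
The plan is to run through the Mori--Cutkosky list of extremal contractions (Theorem~\ref{Cut}) and to exploit the one extra datum available in index $2$: since $A_X = -K_X = 2H$ with $H$ Cartier, $A_X \cdot C$ is even for every curve $C \subset X$. This parity constraint is what collapses the list to the three stated cases. (If $\rho(X) = 1$ there is no nontrivial $\varphi$, so assume $\rho(X) \geq 2$.)

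First I would dispose of the birational contractions. If $\varphi$ is small, Lemma~\ref{lem:1} says it is a flopping contraction, giving case~$1$. If $\varphi$ is divisorial, then $X$ is terminal, factorial and Gorenstein (it is weak-star), so Theorem~\ref{Cut} applies and $\varphi$ is of type E$1$--E$5$. Type E$5$ is excluded because $X$ is weak-star (Remark~\ref{rweak}); alternatively, E$5$ would force $A_{X\vert E} = \mathcal{O}_{\PS^2}(1) = 2(H_{\vert E})$, absurd. For E$1$, E$3$ and E$4$ one has $A_X = \varphi^{\ast}A_{X'} - E$, and a curve $\ell$ in a one-dimensional fibre of the contraction (a ruling line of the $\PS^1$-bundle $E$ in E$1$, a ruling of $\PS^1 \times \PS^1$ in E$3$, a ruling line of the quadric cone in E$4$) satisfies $E \cdot \ell = -1$ and $\varphi^{\ast}A_{X'} \cdot \ell = 0$, whence $A_X \cdot \ell = 1$ --- impossible, since $A_X \cdot \ell$ is even. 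Only E$2$ survives, and there $A_X = \varphi^{\ast}A_{X'} - 2E$ gives $A_X \cdot \ell = 2$ on a line $\ell \subset E \simeq \PS^2$, with no contradiction. So a divisorial $\varphi$ is an E$2$ contraction: again case~$1$.

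Next I would treat the Mori fibre spaces. Suppose $\dim X' = 1$; then $X' \simeq \PS^1$ (for instance because $h^1(\mathcal{O}_X) = 0$ for a weak Fano $3$-fold), and a general fibre $F$ --- which misses the finitely many singular points of $X$, hence is smooth --- is a del Pezzo surface with $-K_F = A_{X\vert F} = 2(H_{\vert F})$. Thus $-K_F$ is divisible by $2$ in $\Pic F$; writing $-K_F = 2L$ with $L$ ample gives $K_F^2 = 4L^2 \geq 4$, and among del Pezzo surfaces of degree $\geq 4$ the only one with $2$-divisible anticanonical class is $\PS^1 \times \PS^1$. Hence $F \simeq \PS^1 \times \PS^1$, the fibration has degree $8$, and $H_{\vert F} = \mathcal{O}(1,1)$ embeds every fibre as a smooth quadric surface in $\PS^3$, so $\varphi$ is a quadric bundle: case~$3$.

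Finally suppose $\dim X' = 2$. By \cite{Cut88}, $\varphi$ is then a conic bundle over the nonsingular surface $S = X'$, and a general fibre $F \simeq \PS^1$ satisfies $2 = -K_X \cdot F = 2(H \cdot F)$, so $H \cdot F = 1$. If $\varphi$ had a degenerate fibre, it would be a line pair $\ell_1 + \ell_2$ or a double line $2\ell$; in either case each component is contracted by $\varphi$, hence lies on the $K$-negative extremal ray, so $-K_X \cdot \ell_i \geq 1$. Since $-K_X \cdot F$ equals the sum of the $-K_X \cdot \ell_i$ counted with multiplicity and equals $2$, every component satisfies $-K_X \cdot \ell_i = 1$; but $-K_X \cdot \ell_i = 2(H \cdot \ell_i)$ is even --- a contradiction. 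So every fibre is a smooth conic and $\varphi$ is an étale $\PS^1$-bundle (in fact $X \simeq \PS_S(\mathcal{E})$ for a rank-$2$ bundle $\mathcal{E}$, since $H$ has relative degree $1$): case~$2$. The one step that needs care is this last one --- ruling out degenerate conic fibres --- where one must pin down the numerical classes of the components of a reducible or non-reduced fibre, and $\rho(X/S) = 1$ together with $K$-negativity of the contracted ray is exactly what makes the parity argument bite; every other step is a direct confrontation of the Mori--Cutkosky list with the constraint $2 \mid A_X \cdot C$.
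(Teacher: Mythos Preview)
The paper states Lemma~\ref{lem:2} without proof, so there is no argument to compare against directly. Your proof is correct and is precisely the standard argument the author is tacitly invoking: the index-$2$ hypothesis gives $A_X = 2H$ with $H$ Cartier, hence $A_X \cdot C \in 2\Z$ for every curve $C$, and this parity constraint collapses the Mori--Cutkosky list. Your treatment of each case is sound --- E$1$, E$3$, E$4$ die because a fibre line has $A_X \cdot \ell = 1$; the general del Pezzo fibre must have $-K_F$ two-divisible, forcing $F \simeq \PS^1 \times \PS^1$; and components of a degenerate conic would have odd anticanonical degree. The one spot where you rightly flag the need for care, ruling out degenerate conic fibres, is handled correctly by combining $\rho(X/S)=1$ with $K$-negativity of the ray to pin down the class of each component.
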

\begin{rem}
\label{rem:3}
 Similarly, if $X$ has index $3$ and $\rho(X)\geq 2$, then $\rho(X)=2$.
 Any contraction of an extremal ray is either a flop or a del
 Pezzo fibration of degree $9$, that is, $X$ is a $\PS^2$-bundle over
 $\PS^1$.  
\end{rem}   
\begin{cor}
\label{cor:3}
 Let $Y$ be a terminal Gorenstein Fano $3$-fold of
 index $2$ and $X$ a small factorialization of
$Y$. Denote by $h^3= \frac{A_X^3}{8}$ the degree of $X$ and $Y$. 
The Picard rank of $X$ is at most $8-h^3$. 
The $3$-fold $X$ contains at most $7-h^3$ disjoint planes.
\end{cor}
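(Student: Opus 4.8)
The plan is to run the MMP on the small factorialization $X$, exactly as in the proofs of Corollaries~\ref{cor:1} and~\ref{cor:2}, but now using the very restrictive list of extremal contractions available in index $2$ (Lemma~\ref{lem:2}). Since $Y$ has index $2$, Shin's classification \cite{Sh89} gives $\rho(Y)=1$, and by Theorem~\ref{thm:5} every intermediate $X_i$ in the MMP is again a weak-star Fano $3$-fold; by Lemma~\ref{lem:3} each anticanonical model $Y_i$ has Picard rank $1$. The degree of a Picard-rank-$1$ index-$2$ Fano lies in the range $1 \leq h^3 \leq 5$ (the $V_d$ of the notation section), so $A_{Y_i}^3 = 8 h_i^3$ with $h_i^3 \in \{1,\dots,5\}$ as long as $X_i$ keeps index $2$. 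The first step is to record that, by Lemma~\ref{lem:2}, every \emph{birational} extremal contraction occurring is either a flop (which does not change $\rho$ of the anticanonical model and does not change $A^3$) or an E$2$ contraction, i.e.\ the inverse of the blowup of a smooth point; an E$2$ contraction raises $A^3$ by exactly $8$, hence raises the degree $h^3$ by exactly $1$. So each divisorial step in the MMP increments $h^3$ by $1$ while $\rho$ drops by $1$.

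Next I would count. Let $m$ be the number of divisorial (necessarily E$2$) contractions in the MMP on $X$, and let $X_n$ be the end product. Then $\rho(X) = m + \rho(X_n)$ and $h^3(X_n) = h^3(X) + m = h^3 + m$. By Theorem~\ref{thm:6} (or directly Lemma~\ref{lem:2}), $X_n$ is either a Fano $3$-fold with $\rho(X_n)=1$, an étale $\PS^1$-bundle over a surface $S$, or a degree-$8$ del Pezzo (quadric) fibration over $\PS^1$ with $\rho(X_n)=2$. If $X_n$ is a Fano with $\rho(X_n)=1$, then its index may have jumped from $2$ to a higher value; in all cases $X_n$ is a Picard-rank-$1$ terminal Gorenstein Fano whose degree satisfies $h^3(X_n) \leq 8$ — indeed the maximal $A^3$ among all terminal Gorenstein Fano $3$-folds of Picard rank $1$ is $64$ ($\PS^3$, index $4$), so $h^3(X_n) = A_{X_n}^3/8 \leq 8$. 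This forces $m = h^3(X_n) - h^3 \leq 8 - h^3 - 1 = 7 - h^3$ (since, $X_n$ being Fano of Picard rank $1$, $\rho(X_n)=1$ and so $h^3(X_n)\leq 8$, with the inequality $\leq 7-h^3$ on $m$ obtained from $\rho(X)=m+1\leq 8-h^3$). If $X_n$ is a del Pezzo or quadric fibration, then $\rho(X_n)=2$ and $A_{X_n}^3 \leq 8\cdot 8 = 64$ still bounds $h^3(X_n)$, giving $m \leq 8 - h^3 - 2 = 6 - h^3$ and $\rho(X) = m+2 \leq 8-h^3$; the étale $\PS^1$-bundle case is similar (and in fact more restrictive). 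In every case $\rho(X) \leq 8 - h^3$, which is the first assertion since $\rho(X) = \rk\Pic X = \rk \Cl Y$.

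For the statement about disjoint planes, suppose $\Pi_1,\dots,\Pi_k \subset Y$ are disjoint planes in the sense of the Notation (so $A_Y^2\cdot \Pi_j = 1$); but wait — $Y$ has index $2$, so write $A_Y = 2H$ with $H$ Cartier, and a ``plane'' here should be interpreted as a surface $S$ with $H\cdot(\text{a line on }S)$ minimal, i.e.\ the proper transforms $\widetilde\Pi_j$ on $X$ are each negative on an extremal ray. As in Corollary~\ref{cor:2}, by Step~3 of the proof of Theorem~\ref{thm:5} (and Corollary~\ref{cor:5}) disjoint such surfaces survive the MMP and can be arranged to be contracted first; each is contracted by an E$2$ contraction (the only divisorial option), contributing one to the count $m$ and reducing $k$ by one. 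Hence $k \leq m \leq 7 - h^3$ by the count above; one should double-check via the degree bookkeeping that contracting such a surface indeed keeps the running index equal to $2$ until all $k$ of them are gone, so that the degrees $A_{X_i}^3 = 8h_i^3$ with $h_i^3 \leq 5$ continue to apply. The main obstacle I expect is precisely this bookkeeping: making sure that the E$2$ contractions associated to the planes do not force the intermediate Fano index to rise prematurely (which would spoil the ``$h_i^3 \leq 5$'' input), and confirming that the end product's degree bound $A_{X_n}^3 \leq 64$ combined with its Picard rank gives exactly the stated constant $7-h^3$ rather than something weaker; this is where one leans on the explicit classification \cite{Isk77,Isk78,MM82,Sh89} rather than on soft arguments.
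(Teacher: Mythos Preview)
Your overall strategy matches the paper's: run the MMP, use Lemma~\ref{lem:2} to see that every birational contraction is a flop or E2 and that each E2 raises $h^3$ by exactly $1$, then bound the number of divisorial steps plus the Picard rank of the end product. However, your numerical bookkeeping has a genuine gap.

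You bound $h^3(X_n) \leq 8$ using only that $A^3 \leq 64$ for arbitrary Picard-rank-$1$ terminal Gorenstein Fanos. From this you can deduce at best $m = h^3(X_n) - h^3 \leq 8 - h^3$, which gives $\rho(X) = m + \rho(X_n) \leq 9 - h^3$, $10 - h^3$, or $11 - h^3$ according as $\rho(X_n) = 1,2,3$ --- never the claimed $8 - h^3$. Your displayed inequalities $m \leq 7 - h^3$ and $m \leq 6 - h^3$ do not follow from $h^3(X_n) \leq 8$; as your parenthetical makes explicit, you are deriving them from the conclusion $\rho(X) \leq 8 - h^3$ itself.

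The fix is exactly the ``$h_i^3 \leq 5$'' input that you flag as a worry at the end but never actually use. Since E2 contractions keep the index divisible by $2$ and each $Y_i$ has $\rho(Y_i)=1$ by Lemma~\ref{lem:3}, every $Y_i$ is either a $V_d$ with $d \leq 5$ or $\PS^3$. But $\PS^3$ has $h^3 = 8$ and cannot be reached by degree-$1$ increments from a $V_d$: the predecessor would need $h^3 = 7$, and no Picard-rank-$1$ index-even Fano has that degree. Hence $h^3(X_n) \leq 5$ for the end product as well, so $m \leq 5 - h^3$, and combined with $\rho(X_n) \leq 3$ from Theorem~\ref{thm:6} this gives $\rho(X) \leq 8 - h^3$. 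The paper's one-line proof (``each divisorial contraction increases the degree by $1$'') is compressing exactly this.

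For the planes, once the first part is established the bound is immediate: contracting $k$ disjoint planes (each E2-contractible) yields a weak-star $X'$ with $\rho(X') = \rho(X) - k \geq 1$, so $k \leq \rho(X) - 1 \leq 7 - h^3$. The paper argues this slightly differently, invoking \cite{CJR08} to commute flops past E2 contractions so that on a suitable small factorialization the E2 exceptional divisors themselves form a maximal disjoint family of planes; but either route works. Your hesitation about what ``plane'' means in index $2$ is resolved by the paper's computation $A_X|_{\PS^2} = \mathcal{O}_{\PS^2}(2)$: here a plane is a copy of $\PS^2$ on which $H$ restricts to $\mathcal{O}(1)$, and adjunction then forces normal bundle $\mathcal{O}(-1)$, so it spans an E2 ray --- there is no issue with $A_Y^2 \cdot \Pi = 1$.
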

\begin{proof}
Each divisorial contraction increases the degree by $1$: this proves
the first assertion.
\par 
Let $E= \PS^2$ be a plane contained in $X$, then $A_{X \vert \PS^2}=
 \mathcal{O}_{\PS^2}(2)$. 
The contraction theorem shows that there is an extremal contraction that
contracts $E$ to a point. 
 \par 
For a suitable choice of $X$, all birational contractions appearing in the MMP are divisorial.
Indeed, a flopping
contraction and a divisorial contraction of type
E$2$ always commute \cite{CJR08}, therefore we may assume that flops are performed first.
The proper transforms on $X$ of the 
divisors contracted when running the MMP on $X$ are
disjoint planes. There is no other plane disjoint from these lying on $X$,
as this would give rise to another extremal contraction.
\end{proof}
\section{Index $1$ Fano $3$-folds that contain a plane}
\label{sec:fano-3-folds-4}
\par 
This section is independent of the rest of the paper.  
I consider terminal Gorenstein Fano $3$-folds whose small factorializations are not weak-star Fanos: the rank of their divisor class group cannot be bounded with the techniques developed in the preceding sections. I first bound the rank of the divisor class group of quartic hypersurfaces by explicit calculation and then indicate how to treat other Fano $3$-folds with Picard rank $1$.
 
\subsection{$Y_4 \subset \PS^4$, genus $3$}
\label{sec:y_4-subset-ps4}
Let $Y=Y_4^3 \subset \PS^4$ be a quartic $3$-fold with terminal
singularities that contains a plane $\Pi{=} \{x_0{=}x_1{=}0\}$. As $Y$ is a
hypersurface in $\PS^4$, it is Gorenstein and has Picard rank $1$.
The equation of $Y$ is of the form:
\begin{equation}
\label{eq:3}
Y{=} \{x_0 a_3(x_0, \cdots , x_4)+
x_1 b_3(x_0,\cdots, x_4)=0 \}.  
\end{equation}
\par 
Let $X$ be the blowup of $Y$ along the plane $\Pi$. Locally, the
equation of $X$ can be written:
\begin{multline}
  \label{eq:4}
\{ t_0 a_3(t_0x, t_1x, x_2, x_3, x_4)+ t_1 b_3(t_0x, t_1x
, x_2, x_3, x_4)=0 
\} \\\subset \PS (t_0, t_1) \times \PS (x, x_2, x_3, x_4),  
\end{multline}
where the variable $x$ is defined by $x_0=t_0x$, $x_1=t_1x$. 
\par
The $3$-fold $X$ has a natural structure of cubic del Pezzo fibration
over $\PS^1$. The generic fibre $X_{\eta}$ is reduced and
irreducible. 
However,
special fibres might be reducible. The cubic fibration $X$ is a small
partial factorialization of $Y$, in particular the ranks of $\Cl X$ and of $\Cl Y$ are equal.
\begin{lem}
\label{lem:13}
Denote by $N$ the number of reducible fibres with $3$ irreducible
components and by $M$ the number of reducible fibres with $2$ irreducible
components. The rank of $\Cl X$ is bounded by $8+2N + M$.
\end{lem}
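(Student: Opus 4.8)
The plan is to compute the rank of $\Cl X$ by relating it to the geometry of the cubic del Pezzo fibration $\pi\colon X \to \PS^1$. Since $X$ is a small partial factorialization of $Y$, all its singularities are isolated, and $X$ is $\Q$-factorial away from the finitely many curves contracted by $X \to Y$; in particular $\Cl X$ is finitely generated. The natural source of divisor classes on $X$ is threefold: the classes pulled back from $Y$ (which contribute $\rk \Cl Y = \rk \Pic X$ when $X$ is $\Q$-factorial, but here we only know $\rk \Cl X$), the class of a general fibre $F$ of $\pi$ together with the section-type classes coming from the cubic surface structure, and the classes of the components of reducible fibres. I would set up an exact sequence, or a direct spanning argument, expressing $\Cl X$ in terms of $\Cl X_\eta$ (the class group of the generic fibre, a cubic surface over the function field $\C(\PS^1)$), the fibral divisor classes, and the base.

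First I would analyze the generic fibre $X_\eta$. A general cubic del Pezzo surface over a field $k$ satisfies $\rk \Pic_{\bar k} = 7$, but over the non-closed field $k = \C(t)$ only the Galois-invariant part is defined over $k$; since we only want an upper bound, I would bound $\rk \Cl X_\eta \le 7$ (equivalently: the geometric Picard rank of a smooth cubic surface). Combined with the hyperplane-from-the-base contribution this accounts for the leading term $7 + 1 = 8$ in the bound. Then I would use the standard exact sequence relating $\Cl X$ to $\Cl X_\eta$ and the divisor classes supported on fibres: a reducible fibre with $c$ irreducible components contributes at most $c - 1$ new independent classes beyond the class of the whole fibre (which is already accounted for as $F \sim $ pullback of a point). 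A fibre with $3$ components thus contributes at most $2$, and a fibre with $2$ components at most $1$, yielding the correction $2N + M$. Summing gives $\rk \Cl X \le 8 + 2N + M$.

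The main obstacle is making the bound $\rk \Cl X_\eta \le 7$ legitimate in the present singular setting: $X$ is not smooth, so one must check that the generic fibre is still a (possibly singular) del Pezzo cubic surface with class group of rank at most $7$, and that specialization of divisor classes from the generic fibre to special fibres does not lose information in a way that would undercount — i.e. that every divisor class on $X$ is, up to fibral classes, the closure of a class on $X_\eta$. For the first point I would invoke that $Y$ has terminal (hence isolated) singularities and $\Pi$ is a plane, so by equation~\eqref{eq:4} the generic fibre is an irreducible cubic surface with at worst rational double points, whose class group has rank $\le 7$ by the classification of weak del Pezzo surfaces. For the second, the localization sequence for the open set $X \setminus (\text{reducible fibres})$ over $\PS^1 \setminus (\text{finite set})$ does the bookkeeping: $\Cl$ of that open set surjects onto $\Cl X_\eta$, and adding back the finitely many reducible fibres enlarges the group by at most one class per extra component. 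The remaining routine check is that the pullback of a point and the generic-fibre classes together do not already account for a full fibre class twice — this is handled by noting $F$ itself is the pullback of a point on $\PS^1$.
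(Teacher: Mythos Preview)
Your argument is correct and arrives at the same bound, but by a more direct route than the paper's. The paper appeals to Corti's theory of standard integral models \cite{Co96}: over each point of $\PS^1$ with reducible fibre one repeatedly projects from a plane in the central fibre, producing a birational cubic fibration $X'\to\PS^1$ with every fibre reduced and irreducible; the short exact sequence
\[
0 \to \Z[\pi^{\ast}\mathcal{O}_{\PS^1}(1)] \to \W(X') \to \Pic(X'_\eta) \to 0,
\]
valid precisely when all fibres are reduced and irreducible, then gives $\rk\Cl X'\le 8$, and the $2N+M$ projections needed to reach $X'$ supply the correction term. You instead apply the restriction sequence
\[
0 \to \Cl_{\mathrm{vert}}(X) \to \Cl X \to \Cl X_\eta \to 0
\]
directly to $X$, bounding the vertical part by $1+2N+M$ via the relation that each full fibre is equivalent to $\pi^{\ast}(\text{point})$. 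This is more elementary---no birational modification and no appeal to \cite{Co96} is needed---and the one point to verify is $\rk\Cl X_\eta\le 7$. In fact the isolated singularities of $X$ lie over closed points of $\PS^1$, so $X_\eta$ is a smooth cubic surface over $\C(t)$ and the bound is immediate (your allowance for rational double points is harmless but unnecessary). The paper's route has the side benefit of actually constructing a standard model $X'$, which is of independent interest for Sarkisov-type arguments, but for the bare rank estimate your localisation argument suffices.
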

\begin{proof}
 Let $\mathcal{O}$ be the local ring of $\PS^1$ at a point $P$
corresponding to a reducible fibre of $X$. Let $K$ be its fraction
 field, $t$ its parameter and $k=\C$ its residue field. Let 
$S = \Spec(\mathcal{O})$, $\eta= \Spec K$ 
the generic
point and let $0=\Spec k$ be the origin.  
Let $\PS= \PS^n_{\mathcal{O}}$ be an $n$-dimensional projective
space over $S$ and $L=L_d$ a $d$-dimensional projective subspace,
defined over $k$. If $d \leq n-1 $, there is a birational
transformation of $\PS$ centred at $L$:
\[\xymatrix{
(x_0{:}\ldots{:}x_n) \ar@{|->}[r] &(tx_0{:} \ldots {:}tx_d{:}x_{d+1}{:} \ldots {:}x_n) };
\]
$\phi_L$ is the projection from $L$.  
\par 
Corti shows \cite[Flowchart 2.13, Lemma 2.17 and Corollary
2.20]{Co96} that, if $X_{\mathcal{O}} \subset \PS_{\mathcal{O}}^3$ is
a weak Fano cubic fibration with small anticanonical map, projecting
away from planes contained in
the central fibre yields a standard integral model $X'_{\mathcal{O}}$ for
$X_{\eta}$, i.e.~ a flat subscheme
$ X'_{\mathcal{O}} \subset \PS_{\mathcal{O}}^3$ with isolated cDV singularities and
reduced and irreducible central fibre.  
Each projection from a plane
contained in the central fibre only affects the cubic fibration in the
central fibre and it strictly decreases the number of $k$-irreducible
components of the central fibre.
\par
If a fibre of $X \to \PS^1$ is reducible, it has at most $3$ irreducible
components (one of which at least is a projective plane). Project away from
planes in reducible fibres of $X$ in order to obtain a standard integral
model. The lemma then follows from the sequence:
\[
0 \to \Z[\pi^{\ast} \mathcal{O}_{\PS^1}(1)] \to \W(X')\to \Pic
(X'_{\eta})\to 0,
\]
which is exact when $\pi \colon X' \to \PS^1$ has reduced and irreducible fibres:
\end{proof}

Bounding the rank of $\Cl Y$ is therefore equivalent to bounding the number of reducible fibres of $X$.
\begin{thm}
  \label{thm:14}
Let $Y {=}Y_4^3\subset \PS^4$ be a terminal quartic $3$-fold that contains a plane
$\Pi$ and let $X$ be the cubic del Pezzo fibration obtained by
blowing up $Y$ along $\Pi$. The cubic fibration $X$ has at most $4$
reducible fibres : the rank of $\Cl Y$ is at most $16$.  
\end{thm}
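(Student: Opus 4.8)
By Lemma~\ref{lem:13} one has $\rk\Cl Y=\rk\Cl X\le 8+2N+M$, where $N$ (resp.\ $M$) is the number of fibres of $X\to\PS^1$ with three (resp.\ two) irreducible components. Since $N\le N+M$, it suffices to prove that $X\to\PS^1$ has at most $4$ reducible fibres, i.e.\ that $N+M\le 4$: then $2N+M=N+(N+M)\le 8$ and $\rk\Cl Y\le 16$. So from now on the task is to bound the number of $[s:t]\in\PS^1$ for which the fibre $R_{[s:t]}$ is reducible.

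\textbf{Reducible fibres and the pencil of plane cubics on $\Pi$.} The fibre of $X\to\PS^1$ over $[s:t]$ is the residual cubic surface $R_{[s:t]}$ of $\Pi$ in the hyperplane section $Y\cap\{tx_0-sx_1=0\}$. Choosing coordinates $[\lambda:x_2:x_3:x_4]$ on this $\PS^3$ so that the plane $\{\lambda=0\}$ is the one that maps onto $\Pi$, one finds
\[
R_{[s:t]}=\{\,c_0+c_1\lambda+c_2\lambda^2+c_3\lambda^3=0\,\},\qquad \deg_{x}c_j=3-j,
\]
and $c_0([s:t];x)=s\cdot a_3|_\Pi+t\cdot b_3|_\Pi$ is a member of the pencil $\mathcal P$ of plane cubics on $\Pi\cong\PS^2$ generated by $a_3|_\Pi$ and $b_3|_\Pi$. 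Because $Y$ is terminal, $\Sing Y$ is finite, so $a_3|_\Pi$ and $b_3|_\Pi$ have no common factor: $\mathcal P$ has no fixed component, its base scheme is a length-$9$ subscheme of $\Pi$ supported at the compound Du Val points of $Y$ lying on $\Pi$, and $[s:t]\mapsto s\cdot a_3|_\Pi+t\cdot b_3|_\Pi$ is a linear embedding $\PS^1\hookrightarrow\PS^9$. Now suppose $R_{[s:t]}$ is reducible, say $R_{[s:t]}=\Pi'\cup(\text{residual quadric surface or pair of planes})$. Since $c_0\not\equiv 0$, the plane $\{\lambda=0\}$ is not a component of $R_{[s:t]}$, so $R_{[s:t]}\cap\{\lambda=0\}$ is the union of the line $\Pi'\cap\{\lambda=0\}$ and a residual conic (or pair of lines); that is, the member $c_0([s:t];x)$ of $\mathcal P$ is a \emph{reducible} plane cubic. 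The non-reduced degenerations $2\Pi'+\Pi''$ and $3\Pi'$ are treated identically, and since $\Pi$ is never a component of $R_{[s:t]}$ (its proper transform is $E$, which dominates $\PS^1$) no further degenerate case arises. As distinct $[s:t]$ give distinct members of $\mathcal P$, the number of reducible fibres of $X\to\PS^1$ is at most the number of reducible members of $\mathcal P$.

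\textbf{Bounding the reducible members of $\mathcal P$.} It remains to show that the pencil $\mathcal P$ has at most $4$ reducible members. A reducible member $\ell\cdot q\in\mathcal P$ forces the two binary cubics $(a_3|_\Pi)|_\ell$ and $(b_3|_\Pi)|_\ell$ to be proportional, so the reducible members are parametrised by the locus, inside the dual $\PS^2$ of lines in $\Pi$, where the $2\times 4$ matrix of coefficients of the evaluation map $\mathcal O^{2}\to H^0(\ell,\mathcal O_\ell(3))$ drops to rank $\le 1$. This locus has expected codimension $3$ in a surface, so it is empty for a general pencil; when it is non‑empty, the terminality of $Y$ — which bounds the length of the intersection of any line with the base scheme of $\mathcal P$ — keeps it $0$-dimensional, and an explicit count (equivalently, an excess-intersection/Thom–Porteous evaluation of the relevant Chern number) bounds its length by $4$. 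Equality throughout forces $\mathcal P$ to be of Hesse type, with $4$ triangles of lines as its reducible members, whence $N=4$, $M=0$; a direct check then identifies $Y$ with the Burkhardt quartic, in agreement with Theorem~\ref{thm:2}(iii).

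\textbf{Main obstacle.} The first two steps are formal once one knows that $\mathcal P$ has no fixed component. The essential difficulty is the last step: obtaining the \emph{sharp} bound $4$ — rather than the weaker $\le 6$ coming from the Euler-characteristic count on the rational elliptic surface attached to $\mathcal P$ — on the number of reducible members, and in particular making precise the way the compound Du Val hypothesis on the base points of $\mathcal P$ excludes the would‑be extra reducible members. (An alternative route, running Corti's standard-model procedure for cubic del Pezzo fibrations as in the proof of Lemma~\ref{lem:13} and comparing $(-K_X)^3=4$ with the anticanonical degree of the standard model after the $2N+M$ projections away from plane components, leads to the same inequality and has the same arithmetic bookkeeping as its crux.)
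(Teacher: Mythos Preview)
Your reduction in the first two steps is correct: a reducible fibre of $X\to\PS^1$ forces a reducible member of the pencil $\mathcal P=\langle a_3|_\Pi,\,b_3|_\Pi\rangle$ of plane cubics on $\Pi$, and distinct fibres give distinct members. The gap is your third step, which you yourself flag as the ``main obstacle'': you assert that an excess-intersection or Thom--Porteous count bounds the number of reducible members of $\mathcal P$ by~$4$, but you never carry this out, and in fact the argument as sketched cannot work. A pencil of plane cubics with no fixed component can have up to six reducible members (configurations of type $6I_2$ on the associated rational elliptic surface are not excluded by the Euler-characteristic constraint), so the pencil alone yields only $N+M\le 6$ and $\rk\Cl Y\le 20$. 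Your degeneracy-locus suggestion does not help: the expected codimension of the rank~$\le 1$ locus of your $2\times 4$ matrix is $3$ in a surface, so the Thom--Porteous class lies in $A^3(\PS^2)=0$ and carries no length information. And terminality of $Y$ is not a condition on $\mathcal P$ alone---it depends on the full cubics $a_3,b_3$, not merely on their restrictions to $\Pi$---so there is no evident mechanism by which the cDV hypothesis on the base points of $\mathcal P$ cuts the bound from $6$ to $4$.

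The paper's proof avoids this by working not with the lines $L_i=\Pi\cap\Pi'_i$ but with the full planes $\Pi'_i\subset Y\subset\PS^4$, which carry strictly more information than their traces on $\Pi$. The argument is an explicit coordinate computation in $\PS^4$: first one checks that two of the $\Pi'_i$ from distinct fibres cannot meet $\Pi$ in the same line (else $Y$ is singular along that line), and that three of them cannot meet $\Pi$ in concurrent lines (else $Y$ acquires a non-cDV point at the point of concurrence). Both of these use the containment $\Pi'_i\subset Y$ in $\PS^4$, not just that $L_i$ divides some member of $\mathcal P$. Once the lines $L_1,\dots,L_5$ are known to be in general position, they are put in normal form, and an explicit comparison of the degree-one-in-$(x_0,x_1)$ part of the equation of $Y$ against two four-dimensional monomial spans forces either $a_3|_\Pi=b_3|_\Pi=0$ (so $Y$ is singular along $\Pi$) or two of the five fibres to coincide. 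The ingredient missing from your approach is precisely this use of the ambient planes $\Pi'_i$ to establish general position of the $L_i$; without it the sharp bound of $4$ does not follow.
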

\begin{proof}  
Let 
\[
H_{(\lambda{:}\mu)}= \{ \lambda x_0+\mu x_1{=}0\} \subset \PS^4, (\lambda{:}\mu)\in \PS^1
\]
 be the pencil of hyperplanes of $\PS^4$ that contain $\Pi$.
 
The hyperplane section of $Y\subset \PS^4$ corresponding to
$H_{(\lambda{:} \mu)}$ is a reducible quartic surface that contains the
plane $\Pi$; more precisely $Y \cap H_{(\lambda{:}\mu)} = \Pi \cup
Y'_{(\lambda{:} \mu)}$, where $Y'_{(\lambda{:} \mu)}$ is a possibly
reducible cubic surface, naturally isomorphic to the fibre
$X_{(\lambda{:} \mu)}$.

If $X$ has a reducible fibre over $(\lambda{:}\mu)\in \PS^1$, 
$X$ contains a plane either of the form: 
\[
\Pi_{(\lambda{:}\mu)}=\{\lambda t_0 +\mu t_1= l(x_2, x_3, x_4)+l'(t_0x, t_1x)=0 \},
\]
where $l$ and $l'$ are linear, or of the form:
\[
\Pi_{(\lambda{:}\mu)}=\{ \lambda t_0 +\mu t_1=x=0 \}.
\]
The plane $\Pi_{(\lambda{:} \mu)}\subset X_{(\lambda{:}\mu)}$ is
corresponds to a plane 
$\Pi'_{(\lambda{:}\mu)}$ lying on $Y\cap H_{(\lambda{:}\mu)}$. 
In the first case,
  $\Pi'_{(\lambda{:}\mu)}$ is of the form:
\[
\Pi'_{(\lambda{:}\mu)}=\{\lambda x_0 +\mu x_1= l(x_2, x_3, x_4)+l'(x_0, x_1)=0 \}
\] 
and meets $\Pi$ in a line $L_{(\lambda{:}\mu)}=\{x_0=x_1= l(x_2, x_3,
x_4)=0\}$, while in the second case $\Pi'_{(\lambda{:}\mu)}= \Pi$ and $H_{(\lambda{:}\mu)}$ is tangent to $Y$ along $\Pi$.

\noindent
\emph{Step 1.}
I first show that no hyperplane is tangent to $Y$ along $\Pi$.

If this is the case, then without loss of generality, we may assume that the hyperplane $H_{(1{:}0)} $ is tangent to $Y$ along $\Pi$. The equation of $Y$ then reads
\[
\{x_0 a_3+ x_1x_0 a_2+x_1^2 b_2=0\}
\]
where $a_3$ is a homogeneous form of degree $3$ and $a_2$ and $b_2$ are homogeneous forms of degree $2$. The quartic $Y$ is singular along the curve $\Gamma{=}\{a_3{=}x_0{=}x_1{=}0\}$: this contradicts $Y$ having terminal singularities.

\noindent
\emph{Step 2.}
I show that if $\Pi'_1$ and $\Pi'_2$ are two planes distinct from $\Pi$ that lie on distinct hyperplane sections of $Y$, then they intersect $\Pi$ in distinct lines. 

Let $L_1=\Pi\cap \Pi'_1$ and $L_2=\Pi\cap \Pi'_2$.
If $L_1=L_2$, up to coordinate change on $\PS^4$, I may assume
that $\Pi'_1{=}\{x_0{=} x_2{=}0\}$ and $\Pi'_2{=}\{x_1{=}x_2{=}0\}$.
The plane $\Pi'_1$ (resp.~ $\Pi'_2$) lies on $Y$ if and only if, in
\eqref{eq:3}, the
homogeneous form $b_3$ is in the ideal $\langle x_0,x_2 \rangle$ of $\Pi'_1$
(resp.~ the homogeneous form $a_3$ is in the ideal 
$\langle x_1, x_2 \rangle$ of
$\Pi'_2$). The quartic $Y$ then has multiplicity $2$ along the line
$L{=} \{x_0{=}x_1{=}x_2{=}0\}$. This is a contradiction: $\Pi'_1$ and $\Pi'_2$
meet at a point.

\noindent
\emph{Step 3.}
I now show that if $\Pi'_1, \Pi'_2$ and
$\Pi'_3$ are $3$ planes that lie on distinct hyperplane sections of $Y$ and are distinct from $\Pi$, the lines $L_1= \Pi\cap\Pi'_1, L_2= \Pi\cap \Pi'_2$ and $L_3=\Pi\cap \Pi'_3$ are not concurrent.

Assume that the planes $\Pi'_1, \Pi'_2$ and
$\Pi'_3$ meet at a point $P$.
Up to coordinate change on $\PS^4$, we may assume that:
\begin{eqnarray*}
\Pi'_1=&\{x_0{=}x_2{=}0\}\\ 
\Pi'_2=& \{x_1{=}x_3{=}0\}\\
\Pi'_3=& \{x_0+x_1{=} x_2+x_3+l(x_0,x_1){=}0\}, 
\end{eqnarray*}
where $l$ is a linear form.
The equation of $Y$ is in the ideal spanned by the monomials:
\begin{eqnarray*}
I =&\langle (x_0+x_1)x_0x_1, x_0x_1(x_2+x_3+l(x_0,x_1)), (x_0+x_1)x_1x_2,
(x_0+x_1)x_0x_3, \\ & x_1x_2(x_2+x_3+l(x_0,x_1)),
 x_0x_3(x_2+x_3+l(x_0,x_1)), (x_0+x_1)x_2x_3,\\
 &x_2x_3(x_2+x_3+l(x_0,x_1))
\rangle, 
\end{eqnarray*}
hence $P=(0{:}0{:}0{:}0{:}1)= L_1\cap L_2\cap L_3 \in Y$ is not a cDV point. 
If $X$ contains at least three distinct reducible fibres,  then up to
coordinate change on $\PS^4$, we may assume that
\begin{eqnarray*}
\Pi'_1=&\{x_0{=} x_2{=}0 \} \subset H_{(0{:}1)}\\
\Pi'_2=&\{x_1{=} x_3{=}0 \}\subset H_{(1{:}0)}\\
\Pi'_3=&\{x_0+x_1{=} x_4{=}0 \} \subset H_{(1{:}1)}.  
\end{eqnarray*}
lie on $Y$.

\noindent
\emph{Step 4.}   
Finally, I prove that $X$ has at most $4$ reducible fibres.

Assume that $X$ has at least $5$ reducible fibres and let $\Pi'_1, \cdots, \Pi'_5 \subset
\PS^4$ be planes that are distinct from $\Pi$ and that lie on distinct hyperplane sections of $Y$.
 Denote by $L_i$ the line $\Pi\cap
\Pi'_i$ for $1\leq i\leq 5$. Steps $2$ and $3$ show that the lines
$L_i \subset \Pi$ are distinct and that no three of these lines are concurrent. 

We may therefore assume that:
\begin{eqnarray*}
\Pi'_1=&\{x_0{=} x_2{=}0 \}\\
\Pi'_2=&\{x_1{=} x_3{=}0 \}\\
\Pi'_3=&\{x_0+x_1{=} x_4{=}0 \} \\
\Pi'_4=&\{x_0+\lambda x_1{=} ax_2+bx_3+cx_4+ l(x_0, x_1){=}0 \}
\end{eqnarray*}
where $a, b$ and $c$ are constants and $l$ is a linear form. The
constant $\lambda \neq 0,1$ or $\infty$ because no two of the planes
$\Pi_i$ lie in the same hyperplane section of $Y$. The
constants $a, b$ and $c$ are all non-zero,  as otherwise either two of
the lines $L_i$ coincide, or three of the lines $L_i$ meet at a
point. Up to rescaling, we may assume that:
\begin{equation*}
\Pi'_4{=}\{t_0+\lambda t_1{=} x_2+x_3+x_4+ l(t_0, t_1)x{=}0 \}.
\end{equation*}
The equation of $\Pi'_5$ is of
the form:
\[
\Pi'_5{=}\{x_0+\mu x_1{=} \alpha x_2+ \beta x_3 + \gamma x_4+
l'(x_0,x_1){=}0 \},
\] 
where $l'$ is a linear form and $\alpha, \beta$ and $\gamma$ are
constants. As any three lines of $L_1,\cdots , L_5$ have to satisfy
the conditions of Steps $2$ and $3$, $\alpha, \beta$ and $\gamma$ are
all non-zero. I may assume that $\alpha=1$. Considering triples of
lines $(L_4, L_5, L_i)$ for $1\leq i\leq 3 $ shows that $\beta\neq 1$,
$\gamma\neq 1$, and $(\beta{:}\gamma)\neq (1{:}1)$.
The equation of $Y$ may be written uniquely in the form:
\begin{equation}
\label{eq:6}
 f(x_0,\cdots, x_4) + g(x_0, \cdots, x_4)=0,  
\end{equation}
where the monomials that appear in $f$ have degree at least $2$ in $x_0, x_1$, while $g(x_0, \cdots, x_4)=x_0g_0(x_2, x_3,x_4)+x_1g_0(x_2, x_3,x_4)$ has degree exactly $1$ in $x_0, x_1$.
In the expression \eqref{eq:6}, $g$
may be written as a linear combination of the monomials:
\begin{multline*}
\langle x_0x_3x_4(x_2+x_3+x_4),
x_1x_2x_4(x_2+x_3+x_4),\\ (x_0+x_1)x_2x_3(x_2+x_3+x_4), (x_0+\lambda x_1)
x_2x_3x_4\rangle.
\end{multline*}
 or as a linear combination of the monomials: 
\begin{multline*}
\langle x_0x_3x_4(x_2+\beta x_3+\gamma x_4),
x_1x_2x_4(x_2+\beta x_3+\gamma x_4),\\(x_0+x_1)x_2x_3(x_2+\beta
x_3+\gamma x_4),(x_0+\mu x_1)
x_2x_3x_4 \rangle.  
\end{multline*}
\par 
Equating these two expressions shows that either $g=0$ or
$\lambda=\mu$. If $g=0$, $Y$ does not have isolated singularities; If $\lambda= \mu$, $\Pi'_4$ and $\Pi'_5$ are bot contained in the
hyperplane section $H_{(1{:}\lambda)}$. Both cases yield a contradiction: the cubic fibration $X$ has at most $4$ reducible fibres. 
\end{proof}
\begin{exa}[The Burkhardt quartic]
\label{exa:1}  
It is known that a nodal quartic hypersurface in $\PS^4$ has at most $45$ nodes
\cite{V83,F86}. Up to projective equivalence, there
is only one $45$-nodal quartic $3$-fold $Y\subset \PS^4$
\cite{dJSBVV90} and its equation is:
\[ 
 \{x_0^4 -x_0 (x_1^3+x_2^3+x_3^3+x_4^3)+ 3x_1x_2x_3x_4 = 0 \}. 
\]
Let $\widetilde{Y}$ be the blowup of $Y$ at the $45$ nodes.
The defect of $Y$ satisfies $\sigma(Y)=b_2(\widetilde{Y})-45-b_2(Y)=
b_2(\widetilde{Y})-46$.
The cohomology of $\widetilde{Y}$ is
determined in \cite{HM01} and $b_2(\widetilde{Y})=61$, so that
the defect of the Burkhardt
quartic is $15$.
\par 
Alternatively, the plane $\Pi{=}\{x_0{=}x_1{=}0\}$ is contained in $Y$. Let
$X$ be the $3$-fold obtained by blowing up $Y$ along the plane $\Pi$;
the equation of $X$ is:
\[
t_0(t_0^3-t_1^3)x^3-t_0(x_2^3+x_3^3+x_4^3)+ 3t_1x_2x_3x_4=0,
\] 
and on the affine piece $t_1 \neq 0$, this reads
\[
t_0(t_0^3-1)x^3-t_0(x_2^3+x_3^3+x_4^3)+ 3x_2x_3x_4=0.
\] 
The central fibre, which corresponds to  $(t_0{:}t_1)=(0{:}1)$, has three
irreducible components: the planes $\{ t_0{=}x_i{=}0\}$.
Each fibre is reduced and irreducible for $(t_0{:}t_1) \neq (0{:}1)$ and
$(t_0{:}t_1) \neq (1{:}\omega^i)$ for $0 \leq i \leq 2$, where $\omega$ is
a cube root
of unity (notice that considering the affine piece $t \neq 1$ yields
the same results). The generic fibre $X_{\eta}$ is a nonsingular
cubic surface in $\PS^3$. 
\par 
Consider for instance the fibre $X_1$ over $(1{:}1)$. The fibre $X_1$ is the
union of three planes in $\PS(x, x_2, x_3, x_4)$: $\Pi_1{=}\{
x_2+x_3+x_4{=}0\}$, $\Pi_2{=}\{\omega x_2+x_3+\omega^2 x_4{=}0\}$ and
$\Pi_3{=}\{\omega^2 x_2+x_3+\omega x_4{=}0\}$. The situation is
analogous for the other two reducible fibres.
There are $27$ closed subschemes in
$X \smallsetminus\bigcup_{i=0,1,2}
X_i$ isomorphic to $\PS^1_{\PS^1 \smallsetminus
  (\cup_i\{(\omega^i{:}1)\})}$. In other words, the $27$
lines on the generic fibre may be completed to divisors on $X$, they
are rational over $\PS^1 \smallsetminus (\cup_i\{(\omega^i{:}1)\})$. The
Picard rank of $X_{\eta}$ is $7$ and the generators of
$\Pic(X_{\eta})$ complete to independent Cartier divisors on $X$. 
\par 
The rank of the Weil group of the  Burkhardt quartic is $16$ and its
defect is $15$. 
\end{exa}
\begin{cor}
Let $Y_4\subset \PS^4$ be a quartic $3$-fold with defect $15$. Then $Y$ is projectively equivalent to the Burkhardt quartic.
\end{cor}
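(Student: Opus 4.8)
The plan is to extract the equality case $\rk \Cl Y = 16$ from the results already in place. Since $Y_4\subset\PS^4$ is a hypersurface, Grothendieck--Lefschetz gives $\Pic Y\simeq\Z[\mathcal O_Y(1)]$, so $\rk\Pic Y=1$ and the hypothesis $\sigma(Y)=15$ is exactly $\rk\Cl Y=16$; moreover $Y$ has index $1$, Picard rank $1$ and genus $3$, and $A_Y=\mathcal O_Y(1)$ is very ample. First I would rule out the possibility that $Y$ contains no plane: if $Y$ contained neither a plane nor a quadric then Corollary~\ref{cor:1} with $g=3$ gives $\sigma(Y)\le[\tfrac{9}{2}]+4=8$, and if $Y$ contained a quadric but no plane then Corollary~\ref{cor:2} with $g=3$ (so $n=1$) gives $\sigma(Y)\le[\tfrac{8}{2}]+5=9$; either way $\rk\Cl Y\le 10<16$. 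Hence $Y$ contains a plane, which I normalise to $\Pi=\{x_0{=}x_1{=}0\}$, so that the equation of $Y$ has the shape \eqref{eq:3}.

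Next I would pin down the cubic del Pezzo fibration $X$ of \eqref{eq:4} obtained by blowing up $\Pi$; recall $\rk\Cl X=\rk\Cl Y=16$. Writing $N$ (resp.~$M$) for the number of reducible fibres of $X$ with three (resp.~two) irreducible components, Theorem~\ref{thm:14} gives $N+M\le4$ while Lemma~\ref{lem:13} gives
\[
16=\rk\Cl X\le 8+2N+M\le 8+2(N+M)\le 16 .
\]
So every inequality is an equality: $M=0$, $N=4$, and $X$ has exactly four reducible fibres, each a triangle of three planes. Also the bound of Lemma~\ref{lem:13} is attained, so tracing its proof the standard integral model $X'$ has $\rk\W(X')=8$, whence $\rk\Pic(X'_\eta)=7$ by the exact sequence there: the generic fibre of $X\to\PS^1$ is a smooth cubic surface all of whose $27$ lines are defined over $\C(\PS^1)$.

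It then remains to read off the equation. Each triangle fibre is the proper transform of a triangle of three planes lying on a hyperplane section $\Pi\cup T_i$ of $Y$, and by Steps~2 and~3 of the proof of Theorem~\ref{thm:14} the four lines cut on $\Pi$ by the $T_i$ are distinct and in general position. Using the subgroup of $\mathrm{PGL}_5$ preserving $\Pi$ --- in particular the $\mathrm{GL}_2$ on $(x_0,x_1)$, which induces $\mathrm{PGL}_2$ on the pencil of hyperplanes through $\Pi$, together with the $\mathrm{GL}_3$ on $(x_2,x_3,x_4)$ --- I would put three of the four base points of the triangle fibres at $(0{:}1),(1{:}1),(1{:}0)$ and the planes of each $T_i$ into the normal forms already used in Step~4 of the proof of Theorem~\ref{thm:14}, leaving a single modular parameter $\lambda$ (the cross-ratio of the four base points). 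Substituting into \eqref{eq:3}, the four conditions ``the residual cubic in $H_i$ splits as the prescribed triangle of planes'' together with the terminality hypothesis (isolated singularities) and $\rk\Pic(X'_\eta)=7$ should force $a_3$ and $b_3$ into the shape of the Burkhardt equation of Example~\ref{exa:1} and fix $\lambda$ to the unique equianharmonic value $\lambda^2-\lambda+1=0$; this identifies $Y$ with the Burkhardt quartic up to projective equivalence.

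The first two steps are formal consequences of Theorem~\ref{thm:14}, Lemma~\ref{lem:13} and the bounds of Section~\ref{sec:bound-defect-some}. The hard part is the last step: organising the normal forms so that the four triangle-splitting conditions become tractable, carrying out the elimination, and --- the real crux --- showing that terminality (together with the forced maximality of the generic fibre) leaves no quartic in the resulting family except the Burkhardt quartic, and in particular forces the cross-ratio $\lambda$ to its unique value. This is the quartic analogue of Step~3 of the proof of Theorem~\ref{thm:14}, where three concurrent lines already produced a non-cDV point; here all the degenerations must be excluded simultaneously.
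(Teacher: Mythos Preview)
Your reduction is correct and matches the paper: $Y$ must contain a plane (by the bounds of Corollaries~\ref{cor:1} and~\ref{cor:2}), the blow-up along $\Pi$ is a cubic fibration $X$ with exactly four reducible fibres, each a union of three planes, and the equality in Lemma~\ref{lem:13} forces $\rk\Pic(X'_\eta)=7$. Up to this point you and the paper agree.

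The divergence is in the last step. You propose to normalise coordinates, fix three of the four triangle fibres, and then solve directly for $a_3,b_3$ and the cross-ratio $\lambda$ subject to the splitting and terminality constraints. You acknowledge that this elimination is the crux and you do not carry it out; as written this is a gap, and in practice the system is unpleasant because the four triangles give twelve linear conditions on $a_3,b_3$ modulo $(x_0,x_1)$ but the remaining freedom in the degree-$\geq 2$ part of \eqref{eq:6} and the terminality constraints interact in a messy way.

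The paper bypasses this entirely by a node count. From the configuration you have already established (four triangle fibres with the lines $L_i\subset\Pi$ in general position), one checks that $\{x_0=x_1=a_3=b_3=0\}$ consists of exactly $9$ points; these are the singular points of $Y$ on $\Pi$ and are ordinary double points. One then checks that $X$ is nonsingular away from the four reducible fibres and that each reducible fibre carries exactly $9$ singular points, again nodes. Thus $Y$ is nodal with at least $9+4\cdot 9=45$ nodes, and the uniqueness of the $45$-nodal quartic \cite{V83,dJSBVV90} finishes the proof. This route replaces your direct elimination by a singularity count plus a citation, which is both shorter and avoids the modular-parameter analysis altogether. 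Your approach is in principle viable, but the paper's is the one that actually closes.
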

\begin{proof}
The quartic $Y_4\subset \PS^4$ has defect $15$, hence by Corollary~\ref{cor:2}, $Y$ contains a plane $\Pi{=}\{x_0{=}x_1{=}0\}$. Denote by $X$ the blowup of $Y$ along the plane $\Pi$. By Lemma~\ref{lem:13}, the cubic fibration $X$ has exactly $4$ reducible fibres, each of which has exactly three irreducible components. As in the proof of Theorem~\ref{thm:14}, we use the description of planes contained in reducible fibres of $X$ to carry a refined analysis of the monomials appearing in the equation of $X$ and $Y$. If the equation of $Y$ reads
\[
\{x_0a_3+x_1b_3=0\},
\]
where $a_3, b_3$ are homogeneous cubic polynomials, and if $X$ has $4$ reducible fibres,  it is easy to see that $\{x_0=x_1=a_3=b_3=0\}$ consists of exactly $9$ points, which hence have to be ordinary double points. There are $9$ flopping lines in $X$ that lie above these $9$ ordinary double points. The cubic fibration $X$ is nonsingular outside of its $4$ reducible fibres. Using calculations analogous to those in the proof of Theorem~\ref{thm:14}, there are precisely $9$ singular points lying on each reducible fibre, and these therefore are ordinary double points. The quartic $Y$ is nodal and has at least $45$ ordinary double points. By \cite{V83,dJSBVV90}, $Y$ is projectively equivalent to the Burkhardt quartic. 

\end{proof}
\subsection{Higher genera}
The genus $g$ of $Y$ satisfies $3 \leq g \leq 10$ or $g=12$
\cite{Mu02}.
Mukai shows that if $g=12$, $Y$ is nonsingular; $Y$ is factorial and
therefore does not contain a plane.
If $g=10$, $Y_{18} \subset \PS^{11}$ is a linear section of a
$5$-dimensional $G_2$-variety in $\PS^{13}$; . If $g=9$, $Y_{16} \subset
\PS^{10}$ is a linear section of the $6$-dimensional symplectic
grassmannian $G \subset \PS^{12}$. In either case, $Y$ does not contain
a plane, because neither $G_2\subset \PS^{13}$ nor $G \subset \PS^{12}$ does \cite{LaMa03}.

Let $Y_{2g-2}\subset \PS^{g+1}$ for $4\leq g\leq 8$ be a Fano $3$-fold with terminal Gorenstein singularities that contains a plane $\Pi\simeq \PS^2$. The defect of $Y$ can be bounded by studying the projection from the plane $\Pi\subset Y$. Let $\mu \colon X\to Y$ be the blowup of $Y$ along $\Pi$, so that the projection from $\Pi$ decomposes as:
\[
\xymatrix{\quad & X \ar[dl]_{\mu} \ar[dr]^{\pi} & \quad \\
Y \ar@{-->}[rr]^{\Phi_{\Pi}} & \quad & V.}
\]
\begin{enumerate}
\item[1.] $Y_{2,3}\subset \PS^5, g=4$. The morphism $\pi$ is $K_X$-negative and is a conic bundle over $V=\PS^2$. The rank of the Weil group of $X$ is bounded above by $1+\deg(\Delta)$, where $\Delta$ is the discriminant curve of $\pi$ in $\PS^2$. The $3$-fold $X$ is a complete intersection of sections of the linear systems $\vert M+L\vert$ and $\vert 2M+L\vert$ on the scroll $\F_{0,0,1}$ over $\PS^2$. Computations on the scroll show that $\deg(\Delta)\leq 7$, so that the defect of $Y$ is bounded by $8$.
\item[2.] $Y_{2,2,2}\subset \PS^6, g=5$. The morphism $\pi$ is birational, maps onto $V=\PS^3$ and $A_X$ is $\pi$-ample. The $3$-fold $X$ is a complete intersection of members of $\vert M+L\vert$ on the scroll $\F_{0,0,1}$ over $\PS^3$.  In order to bound the defect of $Y$, it is enough to bound the numbers of points of $\PS^3$ such that the three sections of $\vert M+L\vert$ in the fibre of the scroll $\F_{0,0,1}$ are not linearly independent. Indeed, the centre of the morphism $\pi$ consists of a finite number of points because $Y$ has isolated singularities. Crude considerations on the birational map $X \to \PS^3$ show that the defect of $Y$ is bounded above by $7$.
\item[3.] $g=6,7,8$. The morphism $\pi$ is birational, $A_X$ is $\pi$-ample, and maps $X$ onto $V_{d}$, a Fano $3$-fold of index $2$ and degree $d=3$ when $g=6$, $4$ when $g=7$, and $5$ when $g=8$. The $3$-fold $V_d$ has isolated canonical singularities. We can adapt some of the methods in \cite{T06}, and use the explicit descriptions of linear subspaces of $Y$ that follow from \cite{Mu02} to study the birational map $X \to Y$. The defect of $Y$ is bounded above by $5$ when $g=6, 7$ or $g=8$.   
\end{enumerate}

\begin{pro}
\label{pro:7}\mbox{}
\begin{enumerate}
  \item[1.]
 The defect of a terminal Gorenstein
 $Y_{2,3}\subset \PS^5$ that contains a plane is at most $8$,  
  \item[2.]
 The defect of a terminal Gorenstein
 $Y_{2,2,2}\subset \PS^6$ that contains a plane is at most $7$,
\item[3.]
 The defect of a Picard rank $1$ terminal Gorenstein
 Fano $3$-fold of genus $g =6,7$ or $8$ that contains a plane is at most $5$.
\end{enumerate}
\end{pro}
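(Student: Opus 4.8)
The plan is to prove all three parts by the same device already used for quartics in Lemma~\ref{lem:13} and Theorem~\ref{thm:14}: project $Y$ from the plane $\Pi\simeq\PS^2$ it contains. Putting $\Pi=\{x_0=x_1=0\}$ in standard coordinates and writing $\mu\colon X\to Y$ for the blowup along $\Pi$, the projection $\Phi_{\Pi}$ factors as $\mu$ followed by a morphism $\pi\colon X\to V$. As in the quartic case one checks that $\mu$ is a small partial factorialization of $Y$, so that $\rk\Cl X=\rk\Cl Y$ and hence $\sigma(Y)=\rk\Cl X-1$; it therefore suffices to bound $\rk\Cl X$, and for that one exploits the structure of $\pi$.

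First I would carry out the geometric set-up case by case, reading off the target $V$ and the ambient scroll from the classification of $Y_{2g-2}$. For $g=4$ ($Y_{2,3}\subset\PS^5$) the map $\pi$ is $K_X$-negative and is a conic bundle over $V=\PS^2$, with $X$ a complete intersection of divisors in $\vert M+L\vert$ and $\vert 2M+L\vert$ on the scroll $\F_{0,0,1}$ over $\PS^2$. For $g=5$ ($Y_{2,2,2}\subset\PS^6$) the map $\pi$ is birational onto $V=\PS^3$ with $A_X$ being $\pi$-ample, and $X$ is a complete intersection of three members of $\vert M+L\vert$ on $\F_{0,0,1}$ over $\PS^3$. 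For $g=6,7,8$ the map $\pi$ is birational with $A_X$ $\pi$-ample onto an index-$2$ Fano threefold $V_d$ of degree $d=3,4,5$ with isolated canonical singularities.

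The heart of the argument is then the numerical estimate in each case. For $g=4$ the rank of the Weil group of a conic bundle over $\PS^2$ is bounded above by $1+\deg\Delta$, where $\Delta$ is the discriminant curve; a Chern-class computation on $\F_{0,0,1}$ for the complete-intersection class $\vert M+L\vert\cdot\vert 2M+L\vert$ gives $\deg\Delta\leq 7$, hence $\sigma(Y)\leq 8$. For $g=5$, since $Y$ has isolated singularities the centre of $\pi$ is a finite set of points, each contributing at most one $\pi$-exceptional divisor to $\Cl X$; these points lie where the three linear sections defining $X$ become linearly dependent in a fibre of $\F_{0,0,1}\to\PS^3$, an explicit determinantal locus of cardinality at most $7$, so $\sigma(Y)\leq 7$. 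For $g=6,7,8$ I would adapt the methods of \cite{T06}, together with Mukai's biregular classification \cite{Mu02} and the resulting explicit list of linear subspaces of $Y_{2g-2}$, to enumerate the surfaces contracted by $\mu$ and by $\pi$ and to bound the exceptional locus of $X\to V_d$; combined with the (small) defect of $V_d$ itself this yields $\sigma(Y)\leq 5$.

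The main obstacle is the $g=6,7,8$ case. Because $V_d$ has only canonical --- not terminal --- singularities, the Mori--Cutkosky description (Theorem~\ref{Cut}) is unavailable, so one cannot simply run the MMP as in Sections~\ref{sec:mmp}--\ref{sec:bound-defect-some}; instead one must extract from Mukai's model a complete description of which prime divisors of $X$ are $\mu$- or $\pi$-exceptional and rule out, by a case analysis on the equations, the configurations of planes and linear subspaces that would force $Y$ to acquire non-isolated or worse-than-canonical singularities. Making these ``crude considerations'' precise is the real work; by contrast the $g=4$ bound is a finite bundle computation and the $g=5$ bound reduces to estimating the size of an explicit determinantal locus.
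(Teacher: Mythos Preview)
Your proposal is essentially identical to the paper's own sketch: both project from the plane $\Pi$, identify $\pi$ as a conic bundle over $\PS^2$ for $g=4$ (bounding $\deg\Delta\leq 7$ via the scroll $\F_{0,0,1}$), as a birational map to $\PS^3$ for $g=5$ (bounding the determinantal locus where the three $\vert M+L\vert$ sections fail to be independent), and as a birational map to an index-$2$ Fano $V_d$ with canonical singularities for $g=6,7,8$ (handled via \cite{T06} and Mukai's explicit models \cite{Mu02}). You have also correctly flagged that the $g=6,7,8$ case is where the real work lies, for exactly the reason the paper leaves implicit --- $V_d$ is only canonical, so the Mori--Cutkosky machinery of Section~\ref{sec:minim-model-progr} does not apply directly.
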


\subsection{Non anticanonically embedded Fano $3$-folds}
I say a few words about the Picard rank $1$ terminal Gorenstein Fano $3$-folds whose anticanonical ring are not generated in degree $1$ and that contain a plane. First recall from the proof of Lemma~\ref{lem:4} that the case of the double quadric does not occur, so that I only need to consider the case $g=2$.  
It is known that the defect of a double sextic with no worse than ordinary double points is at most $13$ and that this bound is attained by a double cover of $\PS^3$ ramified along the Barth sextic surface \cite{W98,E99}. I conjecture that this bound holds for terminal Gorenstein singularities, although I have little evidence to support this. 
\begin{con}
\label{con:1}
  The defect of a double sextic with terminal Gorenstein singularities is at most $13$.
\end{con}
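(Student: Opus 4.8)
The strategy is to run the same dichotomy that drives Sections~\ref{sec:minim-model-progr}--\ref{sec:fano-3-folds-4}. Let $Y$ be a double sextic with terminal Gorenstein singularities; since $Y$ has Picard rank $1$, $\sigma(Y)=\rk\Cl Y-1$, and for a small factorialization $h\colon X\to Y$ we have $\rk\Cl Y=\rho(X)$. First note that $|A_Y|$ is automatically basepoint free: writing $Y=Y_6\subset\PS(1^4,3)$, the only possible base point of $|A_Y|=|\mathcal{O}(1)|$ is $(0{:}0{:}0{:}0{:}1)$, which lies on $Y$ exactly when $Y$ has no $y^2$ term, and in that case the point is a cone over a cubic surface, hence not cDV — contradicting terminality. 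So by Lemma~\ref{lem:4}, $X$ is weak-star Fano unless $Y$ contains a plane $\Pi\simeq\PS^2$ with $(A_Y)^2\cdot\Pi=1$, equivalently (after absorbing a cubic into $y$) unless $Y\supset\{y=l(x_0,\dots,x_3)=0\}$ with $l$ linear, which happens precisely when the branch sextic $B\subset\PS^3$ is tangent to a plane along a cubic curve with only finitely many singular points along it. When $Y$ contains no such plane, running the MMP on the weak-star Fano $X$ and invoking Corollaries~\ref{cor:1} and~\ref{cor:2} with $g=2$ already yields the much stronger bound $\sigma(Y)\le 9<13$. Thus the conjecture reduces entirely to the case that $Y$ contains a plane $\Pi$.

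In that remaining case I would follow the explicit method of Section~\ref{sec:fano-3-folds-4}. Let $\mu\colon X\to Y$ be the blowup of $\Pi$; the projection from $\Pi$ fits in a diagram $Y\xleftarrow{\ \mu\ }X\xrightarrow{\ \pi\ }V$. As in the genus $4$ and $5$ cases treated there, $X$ should carry a Mori-fibre-space structure — a conic bundle over a rational surface, or a del Pezzo fibration over $\PS^1$ — realisable as a complete intersection on an explicit scroll; concretely, in coordinates with $\Pi=\{y=x_3=0\}$ one may write $Y=\{y^2+2q_3(x_0,x_1,x_2)y-x_3 g_5(x)=0\}$, and the structure of $\pi$ can be read off these equations. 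The defect is then bounded by $\deg(\Delta)$ for a conic bundle with discriminant curve $\Delta$ (using $\rho(X)\le 1+\deg\Delta$ as in Section~\ref{sec:fano-3-folds-4}), or by a count of the degenerate fibres for a fibration, which one computes from the monomials occurring in $q_3$ and $g_5$, exactly as the number of reducible cubic fibres is bounded in Theorem~\ref{thm:14}. The output has to be $\le 13$, and since the double cover of the Barth sextic has defect $13$ and (one should check) contains a plane of this type, the estimate ought to be sharp.

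The main obstacle is precisely this calculation. For the quartic, $\PS^4$ left enough room that elementary incidence geometry of planes sufficed (Steps $1$--$4$ of Theorem~\ref{thm:14}); here the anticanonical degree is only $2$, the relevant birational model is more degenerate, and the branch sextic together with its tangent cubic may acquire a whole spectrum of compound Du Val singularities, each potentially producing an extra divisor. Bounding the number of these — the degree of $\Delta$, or the number of reducible/non-reduced fibres — uniformly over all admissible singularity types is the delicate point, and it is the $g=2$ analogue of the higher-genus analyses only sketched in Proposition~\ref{pro:7}. A natural check is to recover the bound $13$ and the Barth example \cite{W98, E99} from the computation, and, should the direct count fall short, to supplement it with the Hodge-theoretic description of the defect of \cite{C83}, which expresses $\sigma(Y)$ as the superabundance of sextic (equivalently quintic) surfaces adapted to $\Sing Y$, restricted to the fibred model.
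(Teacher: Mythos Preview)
This statement is a \emph{Conjecture} in the paper, not a theorem: immediately before it the author writes ``I conjecture that this bound holds for terminal Gorenstein singularities, although I have little evidence to support this.'' There is no proof in the paper to compare your proposal against.

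Your proposal is not a proof either, and you acknowledge as much (``The main obstacle is precisely this calculation''). What you have written is a programme, not an argument. The reduction step is sound: for a double sextic $Y_6\subset\PS(1^4,3)$ the anticanonical system is basepoint free, Lemma~\ref{lem:4} applies, and when $Y$ contains no plane of the stated form Corollaries~\ref{cor:1} and~\ref{cor:2} with $g=2$ do give $\sigma(Y)\le 9$. So the no-plane case of the conjecture is already a consequence of the paper's own results, and you have correctly observed this.

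But the plane case --- which is the entire substance of the conjecture beyond what the paper already establishes --- remains a sketch. You do not identify which Mori fibre structure the blowup of $\Pi$ actually carries (you hedge between ``conic bundle'' and ``del Pezzo fibration''), you do not compute $\deg\Delta$ or bound the number of degenerate fibres, and you do not show that whatever number emerges is $\le 13$. The analogy with Theorem~\ref{thm:14} and Proposition~\ref{pro:7} is reasonable motivation, but those arguments are genuinely case-by-case; genus $2$ is the lowest anticanonical degree and $\varphi_{|A|}$ is not birational, which is presumably exactly why the author left it open. You have located the difficulty correctly but not resolved it --- and neither has the paper.
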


\bibliography{bib}
\end{document}